\newcommand{\Wop}{\mathcal{W}}
\newcommand{\I}{\mathcal{I}}
\newtheorem{lemma}{Lemma}
\newtheorem{prop}[lemma]{Proposition}
\newtheorem{cor}[lemma]{Corollary}
\newtheorem{theorem}[lemma]{Theorem}
\begin{document}
\title{Nonhomogeneous Wavelet Systems in High Dimensions}

\author{Bin Han}

\thanks{Research supported in part by NSERC Canada under Grant
RGP 228051. \hfill  \today
}

\address{Department of Mathematical and Statistical Sciences,
University of Alberta, Edmonton,\quad Alberta, Canada T6G 2G1.
\quad {\tt bhan@math.ualberta.ca}\quad
{\tt http://www.ualberta.ca/$\sim$bhan}
}

\makeatletter \@addtoreset{equation}{section} \makeatother
\begin{abstract}
Continuing the lines developed in \cite{Han:wavelet}, in this paper we study nonhomogeneous wavelet systems in high dimensions. It is of interest to study a wavelet system with a minimum number of generators. It has been showed 
by X. Dai, D. R. Larson, and D. M. Speegle in \cite{DLS}
that for any $d\times d$ real-valued expansive matrix $\dm$, a homogeneous orthonormal $\dm$-wavelet basis can be generated by a single wavelet function.
On the other hand, it has been demonstrated in
\cite{Han:wavelet} that
nonhomogeneous wavelet systems, though much less studied in the literature, play a fundamental role in wavelet analysis and naturally link many aspects of wavelet analysis together.
In this paper, we are interested in nonhomogeneous wavelet systems in high dimensions with a minimum number of generators. As we shall see in this paper, a nonhomogeneous wavelet system naturally leads to a homogeneous wavelet system with almost all properties preserved. We also show that a nonredundant nonhomogeneous wavelet system is naturally connected to refinable structures and has a fixed number of wavelet generators. Consequently, it is often impossible for a nonhomogeneous orthonormal wavelet basis to have a single wavelet generator. However, for redundant nonhomogeneous wavelet systems, we show that for any $d\times d$ real-valued expansive matrix $\dm$, we can always construct a nonhomogeneous smooth tight $\dm$-wavelet frame in $\dLp{2}$ with a single wavelet generator whose Fourier transform is a compactly supported $C^\infty$ function. Moreover, such nonhomogeneous tight wavelet frames are associated with filter banks and can be modified to achieve directionality in high dimensions.
Our analysis of nonhomogeneous wavelet systems employs a notion of frequency-based nonhomogeneous wavelet systems in the distribution space. Such a notion allows us to completely separate the perfect reconstruction property of a wavelet system from its stability in various function spaces.
\end{abstract}

\keywords{Wavelet systems with a minimum number of generators, nonhomogeneous wavelet systems, homogeneous wavelet systems, frequency-based nonhomogeneous dual wavelet frames, distribution space, Fourier transform, nonstationary wavelet systems, real-valued dilation matrices}

\subjclass[2000]{42C40, 42C15, 42B05} \maketitle

\bigskip

\pagenumbering{arabic}


\section{Introduction and Motivations}
For a function $f: \dR \rightarrow \C$ and a $d\times d$ real-valued invertible matrix $U$, throughout the paper we shall adopt the following notation:
\begin{equation}\label{tdm}
f_{U; \vk, \vn}(x):=|\det U|^{1/2} e^{-\iu \vn \cdot Ux}f(Ux-\vk) \quad \mbox{and}\quad
f_{U; \vk}:=f_{U;\vk,\0},
 \qquad x, \vk, \vn\in \dR,
\end{equation}
where $\iu$ denotes the imaginary unit. Now we recall the definition of a homogeneous wavelet system, which is closely related to the discretization of  a continuous wavelet transform.
Let $\dm$ be a $d\times d$ real-valued invertible matrix and let $\Psi$ be a subset of square integrable functions in $\dLp{2}$. The following \emph{homogeneous $\dm$-wavelet system}
\begin{equation}\label{hws}
\WS(\Psi):=\{ \psi_{\dm^j;\vk} \; : \; j\in \Z, \vk\in \dZ, \psi\in \Psi\}
\end{equation}
has been extensively studied in the function space $\dLp{2}$ in wavelet analysis, often with $\dm$ being an integer expansive matrix. Here we say that $\dm$ is {\it an expansive matrix} if all its eigenvalues have modulus greater than one. The elements in $\Psi$ of \eqref{hws} are called wavelet functions or wavelet generators. It is important to point out here that the elements in a set $S$ of generators in this paper are not necessarily distinct and $S$ may be an infinite set. The notation $h\in S$ in a summation means that $h$ visits every element (with multiplicity) in $S$ once and only once. For a set $S$, we shall use $\#S$ to denote its cardinality counting multiplicity. For example, for $\Psi=\{\psi^1, \ldots, \psi^\mpsi\}$, its cardinality $\# \Psi$ is $s$ and $\mpsi$ could be $\infty$, all the functions $\psi^1, \ldots, \psi^\mpsi$ are not necessarily distinct, and $\psi\in\Psi$ in \eqref{hws} simply means $\psi=\psi^1, \ldots, \psi^\mpsi$. For some references on homogeneous wavelet systems, see \cite{BL,BS}, \cite{CHS}--\cite{RonShen:twf}.

It is of interest to study a homogeneous wavelet system with a minimum number of generators. It has been showed in the interesting paper of Dai, Larson, and Speegle \cite[Corollary~1]{DLS} that for any real-valued expansive matrix $\dm$, there exists a (Lebesgue) measurable subset $E$ of $\dR$ such that the homogeneous $\dm$-wavelet system $\WS(\{\psi\})$ is an orthonormal basis in $\dLp{2}$, where $\hat \psi=\chi_E$, the characteristic function of $E$. Such a measurable set $E$ is called a dilation-$\dm$ wavelet set in \cite{DL,DLS}. For more discussion on wavelet sets and their constructions, see \cite{BS,DL,DLS,HW} and references therein. Since an orthonormal wavelet basis is a special case of a tight (or Parseval) wavelet frame, such wavelet sets are automatically framelet sets which yield homogeneous tight wavelet frames in $\dLp{2}$ with a single wavelet generator.
For $\dm=2I_d$ where $I_d$ denotes the $d\times d$ identity matrix, it has been showed in \cite[Theorem~3.8]{Han:frame} that one can always construct a homogeneous smooth tight $\dm$-wavelet frame $\WS(\{\psi\})$ with a single generator such that $\hat \psi$ is a compactly supported function in $C^\infty(\dR)$.
The Fourier transform used in this paper for $f\in \dLp{1}$ is defined to be $\hat f(\xi)=\int_{\dR} f(x) e^{-\iu x\cdot \xi} dx, \xi\in \dR$ and can be naturally extended to square integrable functions and tempered distributions.
Through the study of fast wavelet transforms and their underlying wavelet systems, it has been recently noticed in \cite{Han:wavelet} that a nonhomogeneous wavelet system, though much less studied in the literature, plays a fundamental role in understanding many aspects of wavelet analysis and its applications.
In this paper, we shall continue the lines developed in \cite{Han:wavelet} for dimension one to investigate nonhomogeneous wavelet systems in high dimensions. Let $\Phi$ and $\Psi$ be subsets of $\dLp{2}$. \emph{A nonhomogeneous $\dm$-wavelet system} is defined to be
\begin{equation}\label{nws}
\WS_J(\Phi; \Psi):=\{ \phi_{\dm^J;\vk}\; : \; \vk\in \dZ, \phi\in \Phi \} \cup
\{ \psi_{\dm^j;\vk}\; : \; j\ge J, \vk\in \dZ, \psi\in \Psi\},
\end{equation}
where $J$ is an integer representing the coarsest scale level.
In this paper, we are particularly interested in nonhomogeneous wavelet systems with a minimum number of generators, that is, the smallest possible cardinalities $\#\Phi$ and $\#\Psi$ of generators in \eqref{nws}. In contrast to homogeneous wavelet systems,
we shall see in this paper that there is an intrinsic difference between nonredundant and redundant nonhomogeneous wavelet systems.
To have some rough ideas about our results in this paper, let us present here two results on nonredundant and redundant nonhomogeneous wavelet systems with a minimum number of generators.

For nonhomogeneous orthonormal wavelet bases, we have the following result, which is a special case of Theorem~\ref{thm:bw}.

\begin{theorem}\label{thm:rw:number} Let $\dm$ be a $d\times d$ integer invertible matrix. Let $\Phi=\{\phi^1, \ldots, \phi^\mphi\}$ and $\Psi=\{\psi^1, \ldots, \psi^\mpsi\}$ be subsets of $\dLp{2}$ with $\mphi,\mpsi\in \N\cup\{0\}$. Suppose that the nonhomogeneous $\dm$-wavelet system $\WS_J(\Phi; \Psi)$ is an orthonormal basis of $\dLp{2}$ for some integer $J$. Then $\mpsi=\mphi(|\det \dm|-1)$ and there exist an $\mphi\times \mphi$ matrix $\fa$ and an $\mpsi\times \mphi$ matrix $\fb$ of $2\pi$-periodic measurable functions in $\dTLp{2}$ such that
\begin{equation}\label{refeq:phi:psi}
\hat \phi(\dm^T\xi)=\fa(\xi)\hat \phi(\xi)\quad \mbox{and}\quad
\hat \psi(\dm^T\xi)=\fb(\xi)\hat \phi(\xi),\qquad a.e.\, \xi\in \dR,
\end{equation}
where $\phi:=[\phi^1, \ldots, \phi^\mphi]^T$ and $\psi:=[\psi^1, \ldots, \psi^\mpsi]^T$.
If in addition $\dm$ is an expansive matrix, then the homogeneous $\dm$-wavelet system $\WS(\Psi)$ is an orthonormal basis of $\dLp{2}$.
\end{theorem}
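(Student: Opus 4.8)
The plan is to exploit the self-similar structure of $\WS_J(\Phi;\Psi)$ under dilation. First I would note that the operator $f\mapsto f_{\dm;\0}$ of \eqref{tdm} is unitary on $\dLp{2}$ and carries $\WS_J(\Phi;\Psi)$ bijectively onto $\WS_{J+1}(\Phi;\Psi)$; hence if one such system is an orthonormal basis, so is every $\WS_{J'}(\Phi;\Psi)$, $J'\in\Z$, and since none of the claimed conclusions involves $J$ we may take $J=0$. Writing $\WS_0(\Phi;\Psi)=A\cup C$ and $\WS_1(\Phi;\Psi)=B\cup C$ with the common orthonormal subfamily $C=\{\psi_{\dm^j;\vk}:j\ge1,\vk\in\dZ,\psi\in\Psi\}$, and $A=\{\phi_{\dm^0;\vk}\}\cup\{\psi_{\dm^0;\vk}\}$, $B=\{\phi_{\dm;\vk}\}$, the fact that both unions are orthonormal bases forces $\overline{\mathrm{span}}\,A=(\overline{\mathrm{span}}\,C)^\perp=\overline{\mathrm{span}}\,B=:V_1$. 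Thus $A$ and $B$ are two orthonormal bases of the same space $V_1$, and with $V_0:=\overline{\mathrm{span}}\{\phi_{\dm^0;\vk}\}$ and $W_0:=\overline{\mathrm{span}}\{\psi_{\dm^0;\vk}\}$ we obtain the orthogonal decomposition $V_1=V_0\oplus W_0$.

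The refinement identities \eqref{refeq:phi:psi} then come from membership in $V_1$. Since each $\phi^l\in V_1=\overline{\mathrm{span}}\{\phi^m_{\dm;\vk}\}$ and $B$ is orthonormal, Parseval gives $\phi^l=\sum_{m,\vk}\langle\phi^l,\phi^m_{\dm;\vk}\rangle\,\phi^m_{\dm;\vk}$ with square-summable coefficients. Taking Fourier transforms, using $\widehat{\phi^m_{\dm;\vk}}(\xi)=|\det\dm|^{-1/2}e^{-\iu\vk\cdot\dm^{-T}\xi}\widehat{\phi^m}(\dm^{-T}\xi)$ and gathering the $\vk$-sum into $2\pi$-periodic functions $\fa_{l,m}$, the substitution $\xi\mapsto\dm^T\xi$ yields $\hat\phi(\dm^T\xi)=\fa(\xi)\hat\phi(\xi)$, and the square-summability of the coefficients places the entries of $\fa$ in $\dTLp{2}$. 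The identity $\hat\psi(\dm^T\xi)=\fb(\xi)\hat\phi(\xi)$ follows verbatim from $\psi^n\in V_1$.

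The cardinality relation I would obtain by a dimension count for $\dZ$-shift-invariant spaces, using crucially that $\dm$ is an integer matrix, so $\dZ\subseteq\dm^{-1}\dZ$ and $V_0,W_0,V_1$ are all invariant under $\dZ$-shifts. For a shift-invariant space with an orthonormal basis of $\dZ$-translates of $p$ generators, the associated dimension function equals $p$ almost everywhere; hence $V_0$ contributes $\mphi$ and $W_0$ contributes $\mpsi$, and additivity of the dimension function over the orthogonal sum $V_1=V_0\oplus W_0$ gives $\dim_{V_1}=\mphi+\mpsi$ a.e. On the other hand, writing $\vk=\dm\vn+\gamma$ with $\gamma$ in a set $\Gamma$ of $|\det\dm|$ coset representatives of $\dZ/\dm\dZ$, the identity $\phi^m_{\dm;\dm\vn+\gamma}=(\phi^m_{\dm;\gamma})(\cdot-\vn)$ exhibits $B$ as the $\dZ$-translates of the $\mphi|\det\dm|$ orthonormal generators $\{\phi^m_{\dm;\gamma}:1\le m\le\mphi,\ \gamma\in\Gamma\}$, so $\dim_{V_1}=\mphi|\det\dm|$ a.e. Comparing the two counts gives $\mpsi=\mphi(|\det\dm|-1)$.

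Finally, assume $\dm$ expansive. Orthonormality of the full homogeneous family $\WS(\Psi)$ across all scales $j\in\Z$ is immediate, since any two indices $j,j'$ both exceed some $J$ and $\WS_J(\Phi;\Psi)$ is orthonormal. For completeness, dilating $V_1=V_0\oplus W_0$ gives the nested tower $V_J\subseteq V_{J+1}=V_J\oplus W_J$ (with $W_j:=\overline{\mathrm{span}}\{\psi_{\dm^j;\vk}\}$) and $\bigoplus_{j\ge J}W_j=V_J^\perp$; letting $J\to-\infty$ yields $\overline{\mathrm{span}}\,\WS(\Psi)=\overline{\bigcup_J V_J^\perp}=\bigl(\bigcap_J V_J\bigr)^\perp$, so everything reduces to $\bigcap_{J\in\Z}V_J=\{0\}$. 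I expect this last step to be the main analytic obstacle, as it is exactly where expansiveness enters: I would prove it by the standard multiresolution argument, approximating an arbitrary $f$ by functions whose Fourier transform is compactly supported and bounded away from the origin, and showing that the norm of the orthogonal projection of such $f$ onto $V_J$ tends to $0$ as $J\to-\infty$ because $\dm^J\to0$ pushes the relevant frequency content out of the support. Granting $\bigcap_J V_J=\{0\}$, we conclude $\overline{\mathrm{span}}\,\WS(\Psi)=\dLp{2}$, so $\WS(\Psi)$ is an orthonormal basis of $\dLp{2}$.
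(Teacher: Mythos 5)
Your proposal is correct, and its skeleton (expand in the dilated system to get \eqref{refeq:phi:psi}; compare two generating sets of the same shift-invariant space for the count; kill the coarse-scale content as $J\to-\infty$ for the homogeneous statement) matches the paper's, but the execution differs at three points. First, the paper obtains Theorem~\ref{thm:rw:number} as a special case of Theorem~\ref{thm:bw} on biorthogonal Riesz bases, so it must route everything through dual Riesz bases and the biorthogonality relations \eqref{bio:phi}--\eqref{bio:psi}; your orthogonal-complement identification $\overline{\mathrm{span}}\,A=(\overline{\mathrm{span}}\,C)^\perp=\overline{\mathrm{span}}\,B$ is a genuine simplification that is available only in the orthonormal case and would not survive the paper's level of generality. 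Second, for $\mpsi=\mphi(|\det \dm|-1)$ the paper is self-contained: writing $\vk=\dm\vn+\gamma$ exactly as you do, it forms the matrix $\PP_{[\fa_\0,\fb_\0]}$ of \eqref{PP} and derives the two-sided identities $\PP_{[\fa_\0,\fb_\0]}\,\ol{\PP_{[\fa_\0,\fb_\0]}}^T=I_{\mphi+\mpsi}$ and $\ol{\PP_{[\fa_\0,\fb_\0]}}^T\PP_{[\fa_\0,\fb_\0]}=I_{\mphi\ddm}$ (the dual symbols coincide with the primal ones by orthonormality), which force the matrix to be square; you instead invoke the dimension function of shift-invariant spaces (constant and equal to the number of orthonormal generators, additive over orthogonal sums), which is the same fiber-dimension count but outsourced to known shift-invariant space theory (de Boor--DeVore--Ron, Bownik) that the paper does not presuppose, so you should cite it explicitly. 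Third, for completeness of $\WS(\Psi)$ the paper does not pass through $\bigcap_J V_J=\{0\}$: its Lemma~\ref{lem:to0} proves directly, by a space-domain argument (simple functions, $\sum_{\phi\in\Phi}|\phi|^2\in\dLp{1}$, eventual disjointness of the sets $\dm^JE-\vk$), that $\lim_{J\to-\infty}\sum_{\phi\in\Phi}\sum_{\vk\in\dZ}|\la f,\phi_{\dm^J;\vk}\ra|^2=0$ for every $f\in\dLp{2}$, and Proposition~\ref{prop:frame} then gives $\WS(\Psi)$ a lower frame bound, whence density of its span. Since the quantity controlled by Lemma~\ref{lem:to0} is exactly your $\|P_Jf\|^2$, that lemma is precisely your assertion $\bigcap_J V_J=\{0\}$ in a stronger quantitative form, so you could simply quote it and dispense with your only sketched step; alternatively, your frequency-side Hern\'andez--Weiss-type argument does work once the Cauchy--Schwarz and lattice-counting details are written out, with expansiveness entering at the same point in both proofs.
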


By Theorem~\ref{thm:rw:number}, for a nonhomogeneous orthonormal $\dm$-wavelet basis in $\dLp{2}$, the smallest possible numbers of generators are $\#\Phi=1$ and $\#\Psi=1$, for which the integer matrix $\dm$ must satisfy $|\det \dm|=2$.

In contrast to nonredundant nonhomogeneous wavelet systems in Theorem~\ref{thm:rw:number}, for redundant nonhomogeneous wavelet systems such as tight wavelet frames (and consequently dual wavelet frames), we have the following result, whose proof and construction are given in Section~4.

\begin{theorem}\label{thm:ntwf:special} Let $\dm$ be a $d\times d$ real-valued expansive matrix. Then there exist two real-valued functions $\phi, \psi$ in the Schwarz class such that
\begin{enumerate}
\item[{\rm(i)}] $\WS_J(\{\phi\}; \{\psi\})$ is a nonhomogeneous tight $\dm$-wavelet frame in $\dLp{2}$ for all integers $J$:
\begin{equation}\label{ntwf:special}
\|f\|^2_{\dLp{2}}=\sum_{\vk\in \dZ} |\la f, \phi_{\dm^J; \vk}\ra|^2+\sum_{j=J}^\infty \sum_{\vk\in \dZ} |\la f, \psi_{\dm^j; \vk}\ra|^2\qquad \forall\; f\in \dLp{2};
\end{equation}
\item[{\rm(ii)}] $\hat \phi$ and $\hat \psi$ are compactly supported $C^\infty$ even functions;
\item[{\rm(iii)}] $\hat \psi$ vanishes in a neighborhood of the origin, i.e., $\psi$ has arbitrarily high vanishing moments;
\item[{\rm(iv)}] 
there exist $2\pi\dZ$-periodic measurable functions $\fa_\vk, \fb_\vk, \vk\in \dZ$ in $C^\infty(\dT)$ such that
\begin{equation}\label{refeq}
e^{-\iu \vk \cdot \dm^T\xi} \hat \phi(\dm^T \xi)=\fa_\vk(\xi) \hat \phi(\xi) \quad \mbox{and}\quad
e^{-\iu \vk \cdot \dm^T\xi} \hat \psi(\dm^T \xi)=\fb_\vk(\xi) \hat \phi(\xi), \quad \xi\in \dR, \vk\in \dZ.
\end{equation}
\end{enumerate}
Moreover, $\WS(\{\psi\})$ is a homogeneous tight $\dm$-wavelet frame in $\dLp{2}$ satisfying
\begin{equation}\label{twf:special}
\|f\|^2_{\dLp{2}}=\sum_{j\in \Z} \sum_{\vk\in \dZ} |\la f, \psi_{\dm^j; \vk}\ra|^2\qquad \forall\; f\in \dLp{2}.
\end{equation}
\end{theorem}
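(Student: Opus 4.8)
The plan is to carry out the entire construction on the Fourier side and to read off properties (i)--(iv) from a single scalar identity, relying on the frequency-domain characterization of (nonhomogeneous and homogeneous) tight wavelet frames developed in this paper. Write $M:=\dm^T$, which is again expansive. The key observation is that if $\hat\phi,\hat\psi$ are compactly supported with sufficiently small supports, then all the ``off-diagonal'' conditions in the tight-frame characterization --- those indexed by a nonzero frequency-shift vector, which involve products of the translates of $\hat\phi$ and of $\hat\psi$ at each scale --- vanish identically, because the supports cannot meet any of their nontrivial $2\pi\dZ$-translates. Thus the whole problem collapses to arranging the single ``diagonal'' partition-of-unity identity together with smoothness, evenness, and compact support.

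Second, I set $\Theta:=|\hat\phi|^2$ and prescribe
\[
|\hat\psi(\xi)|^2=\Theta(M^{-1}\xi)-\Theta(\xi),\qquad \xi\in\dR.
\]
With this choice the nonhomogeneous diagonal identity telescopes: for every integer $J$,
\[
|\hat\phi(M^{-J}\xi)|^2+\sum_{j=J}^\infty|\hat\psi(M^{-j}\xi)|^2
=|\hat\phi(M^{-J}\xi)|^2+\lim_{N\to\infty}\bigl(\Theta(M^{-N-1}\xi)-\Theta(M^{-J}\xi)\bigr).
\]
Since $M^{-1}$ is contractive, $\Theta(M^{-N-1}\xi)\to\Theta(\0)$ as $N\to\infty$, so if I normalize $\Theta(\0)=1$ the right-hand side equals $1$ for a.e.\ $\xi$; this is exactly \eqref{ntwf:special} for every $J$, giving (i). Letting $J\to-\infty$ and using $\Theta(M^{-J}\xi)\to0$ for $\xi\ne\0$ (compact support of $\hat\phi$) yields $\sum_{j\in\Z}|\hat\psi(M^{-j}\xi)|^2=1$ a.e., which is \eqref{twf:special}, giving the homogeneous tight frame. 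Moreover, since $\Theta\equiv1$ near $\0$, the difference $\Theta(M^{-1}\cdot)-\Theta$ vanishes there, so $\hat\psi$ vanishes near the origin, which is (iii).

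Third, and this is where the real work lies, I must actually produce such a $\Theta$. Because $M$ is expansive, there are an ellipsoidal norm $\rho(\xi)=\sqrt{\xi^T P\xi}$ and a constant $c\in(0,1)$ with $\rho(M^{-1}\xi)\le c\,\rho(\xi)$ for all $\xi$. I take $\hat\phi(\xi)=h(\rho(\xi))$, where $h\in C^\infty[0,\infty)$ is nonincreasing, $h\equiv1$ near $0$ and $h\equiv0$ on $[b,\infty)$ for a small $b$; then $\Theta=h(\rho)^2$ is even and $C^\infty$ (the non-smoothness of $\rho$ at $\0$ is harmless because $h$ is constant there), and $\rho(M^{-1}\xi)<\rho(\xi)$ with $h$ nonincreasing forces $\Theta(M^{-1}\xi)\ge\Theta(\xi)$, so the prescribed $|\hat\psi|^2$ is nonnegative. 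Choosing $b$ small guarantees the small-support hypothesis of the first paragraph and places $\operatorname{supp}\hat\phi=\{\rho\le b\}$ inside one fundamental domain of $2\pi\dZ$. The delicate point --- the main obstacle --- is extracting a genuinely $C^\infty$, even, real function $\hat\psi$: I arrange $h$ to be flat (vanishing together with all derivatives at the ends of its transition interval) so that the nonnegative $C^\infty$ function $h(\rho(M^{-1}\cdot))^2-h(\rho(\cdot))^2$ is strictly positive on the open annulus supporting it and flat at its boundary, whence it admits a $C^\infty$ square root $\hat\psi$. Evenness of $\rho$ makes $\hat\phi,\hat\psi$ even, so $\phi,\psi$ are real-valued Schwartz functions, giving (ii).

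Finally, for (iv) I use the nesting of supports under $M^{-1}$. Since $M^{-1}$ maps $\{\rho\le b\}$ strictly into itself, $\operatorname{supp}\hat\phi(M\,\cdot)=M^{-1}\operatorname{supp}\hat\phi\subseteq\operatorname{supp}\hat\phi$, and likewise $M^{-1}\operatorname{supp}\hat\psi\subseteq\operatorname{supp}\hat\phi$; moreover wherever $\hat\phi(M\xi)\ne0$ or $\hat\psi(M\xi)\ne0$ one checks $\Theta(\xi)>0$, i.e.\ $\hat\phi(\xi)\ne0$. Hence the quotients $\hat\phi(M\xi)/\hat\phi(\xi)$ and $\hat\psi(M\xi)/\hat\phi(\xi)$ are well defined and $C^\infty$, supported in the small ball; multiplying by the smooth factor $e^{-\iu\vk\cdot M\xi}$ and extending $2\pi\dZ$-periodically --- possible precisely because the ball lies in one period cell --- produces $\fa_\vk,\fb_\vk\in C^\infty(\dT)$ satisfying \eqref{refeq}. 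This completes the program; the only genuinely nontrivial step is the simultaneous achievement, in the third paragraph, of the monotonicity of $\Theta$ and the $C^\infty$ extraction of the square root defining $\hat\psi$.
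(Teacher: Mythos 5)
Your overall route is the same as the paper's: pass to the frequency side, use small supports to annihilate all shift terms in the frequency-based characterization (Corollary~\ref{cor:ntwf:special}, condition \eqref{special:gen}), so that everything reduces to the telescoping identity $|\hat\phi(\xi)|^2+|\hat\psi(\xi)|^2=|\hat\phi(\dn\xi)|^2$ with $\dn=(\dm^T)^{-1}$, build $\hat\phi$ from a smooth profile composed with a norm adapted to $\dm^T$ (Lemma~\ref{lem:matrix}), and take $\hat\psi$ as a square root. The genuine gap is in your item (iv), for the filters $\fb_\vk$. You claim the quotient $\hat\psi(\dm^T\xi)/\hat\phi(\xi)$ is ``$C^\infty$, supported in the small ball,'' so that multiplying by $e^{-\iu\vk\cdot\dm^T\xi}$ and periodizing gives $\fb_\vk\in C^\infty(\dT)$. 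This is false, and it fails because of your own telescoping definition: writing $c\in(0,1)$ for your contraction constant and $b$ for the radius of $\operatorname{supp}\hat\phi$ in your norm $\rho$, one has $\rho(\dm^T\xi)\ge\rho(\xi)/c$, so $\hat\phi(\dm^T\xi)=0$ on the whole shell $\{cb<\rho(\xi)<b\}$, whence $|\hat\psi(\dm^T\xi)|^2=|\hat\phi(\xi)|^2$ there, i.e.\ $\hat\psi(\dm^T\xi)=\hat\phi(\xi)$ (this is exactly \eqref{fpsi:sp:1} in the paper's proof). The quotient therefore equals $1$, not $0$, on a full neighborhood of the boundary of $\operatorname{supp}\hat\phi$; extending it by zero and periodizing creates a jump, not a function in $C^\infty(\dT)$. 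Moreover, for $\vk\ne\0$ and a non-integer real-valued $\dm$, the factor $e^{-\iu\vk\cdot\dm^T\xi}$ is not $2\pi\dZ$-periodic, so ``extend $2\pi\dZ$-periodically'' is only meaningful if what you periodize vanishes near the boundary of the cell $[-\pi,\pi)^d$ --- exactly what the quotient refuses to do. This is precisely why the paper introduces the cutoff $\fh_{\gl_0}$ and the two-branch definition \eqref{fb}, setting $\fb_\vk=e^{-\iu\vk\cdot\dm^T\xi}\fh_{\gl_0}(\xi)$ on the set where $\hat\phi=0$ and inserting the factor $\fh_{\gl_0}$ in the other branch; some such device is unavoidable. (Your $\fa_\vk$ is fine: there the numerator $\hat\phi(\dm^T\cdot)$ is supported where $\hat\phi\equiv1$, so the quotient genuinely is a smooth function supported well inside the cell.)

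Two further steps need repair. First, the implication you invoke for $\hat\psi$ --- nonnegative, smooth, strictly positive on an open annulus, flat at its boundary, hence admitting a $C^\infty$ square root --- is not valid in general: $g(t)=e^{-1/t}\bigl(1+\sin(1/t)\bigr)+e^{-2/t}$ for $t>0$ and $g(t)=0$ for $t\le0$ is smooth, flat at $0$, strictly positive on $(0,\infty)$, yet $\sqrt{g}$ is not $C^2$ on any neighborhood of $0$ (its second derivative blows up along the points where $\sin(1/t)=-1$); flatness gives no lower bound on the function against its derivatives, which is what the derivatives of $\sqrt{\feta}$ inside the annulus require. The paper's own Taylor-expansion sentence is admittedly just as terse, but your construction can be made airtight: choose the transition band $[a,b]$ of $h$ so narrow that $cb<a$ (then the shells where $h(\rho(\cdot))$ and $h(\rho((\dm^T)^{-1}\cdot))$ transition are disjoint) and take $h=\cos(\tfrac{\pi}{2}s)$ on $[a,b]$ with $s$ a smooth transition, flat at both ends; then on the inner shell $\sqrt{\feta}=\sin(\tfrac{\pi}{2}s(\rho(\cdot)))$ and on the outer shell $\sqrt{\feta}=h(\rho((\dm^T)^{-1}\cdot))$, both manifestly $C^\infty$. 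Second, your last step conflates the scalar identity $\sum_{j\in\Z}|\hat\psi((\dm^T)^{-j}\xi)|^2=1$ a.e.\ with the frame identity \eqref{twf:special}; these are not the same statement, and the paper does not provide, for general real expansive dilations, a frequency-based characterization of homogeneous systems that would let you pass from one to the other. The correct route, and the one the paper takes, is Proposition~\ref{prop:frame}: the identity \eqref{ntwf:special} holds for every integer $J$, and Lemma~\ref{lem:to0} gives $\sum_{\vk\in\dZ}|\la f,\phi_{\dm^J;\vk}\ra|^2\to0$ as $J\to-\infty$, which yields \eqref{twf:special}.
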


The property in \eqref{refeq} is called the refinable structure of $\phi$ and $\psi$ in this paper.
We shall see in Section~4 that the nonhomogeneous tight $\dm$-wavelet frames in Theorem~\ref{thm:ntwf:special} can be easily modified to achieve directionality in high dimensions by using nonstationary tight wavelet frames.

The structure of the paper is as follows. In Section~2, we shall explore the connections between nonhomogeneous and homogeneous wavelet systems in $\dLp{2}$. We shall see in Section~2 that any given nonhomogeneous wavelet system will yield a homogeneous wavelet system and a sequence of nonhomogeneous wavelet systems with almost all properties preserved. For a nonredundant nonhomogeneous wavelet system, we shall see in Section~2 that it has a natural connection to refinable structures. In Section~3, we shall introduce and characterize a pair of frequency-based nonhomogeneous (and more generally, nonstationary) dual wavelet frames in the distribution space. We shall see in Section~3 that this notion
allows us to completely separate the perfect reconstruction property of a wavelet system from its stability in various function spaces.
As an application, for any $d\times d$ real-valued invertible matrix $\dm$, we obtain a complete characterization of a pair of nonhomogeneous (as well as nonstationary) dual wavelet frames in $\dLp{2}$. This also yields a characterization of nonstationary tight wavelet frames in $\dLp{2}$.
Based on the results in Section~3, we shall prove Theorem~\ref{thm:ntwf:special} in Section~4.
Moreover, we shall show that such nonhomogeneous tight wavelet frames in Theorem~\ref{thm:ntwf:special} are associated with filter banks and can be easily modified to achieve directionality in high dimensions.

\section{Nonhomogeneous and Homogeneous Wavelet Systems in $\dLp{2}$}

In this section, we shall study the connections of a nonhomogeneous wavelet system in $\dLp{2}$ to a homogeneous wavelet system in $\dLp{2}$. To do so, we need the following auxiliary result.

\begin{lemma}\label{lem:to0}
Let $\dm$ be a $d\times d$ real-valued expansive matrix and $\Phi$ be a (not necessarily finite) subset of $\dLp{2}$ such that $\sum_{\phi\in \Phi} \|\phi\|_{\dLp{2}}^2<\infty$. Suppose that there exists a positive constant $C$ such that $\sum_{\phi\in \Phi} \sum_{\vk\in \dZ} |\la f, \phi(\cdot-\vk)\ra|^2\le C \|f\|^2_{\dLp{2}}$ for all $f\in \dLp{2}$. Then
\begin{equation}\label{to0}
\lim_{j\to -\infty} \sum_{\phi\in \Phi} \sum_{\vk\in \dZ} |\la f, \phi_{\dm^j; \vk}\ra|^2=0\qquad \forall\; f\in \dLp{2}.
\end{equation}
\end{lemma}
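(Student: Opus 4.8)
The plan is to transfer the scale-$j$ coefficients to fixed-scale coefficients of a dilated function, reduce to a dense class of $f$ by a uniform Bessel bound, and then extract the decay from the competition between a vanishing amplitude and a growing number of interacting integer shifts. The starting point is the unitary dilation identity: substituting $y=\dm^j x$ gives $\la f,\phi_{\dm^j;\vk}\ra=\la F_j,\phi(\cdot-\vk)\ra$ with $F_j(y):=|\det\dm|^{-j/2}f(\dm^{-j}y)$ and $\|F_j\|_{\dLp{2}}=\|f\|_{\dLp{2}}$. Applying the hypothesis to $F_j$ therefore yields $\sum_{\phi\in\Phi}\sum_{\vk\in\dZ}|\la f,\phi_{\dm^j;\vk}\ra|^2\le C\|f\|_{\dLp{2}}^2$ \emph{uniformly in $j$}, so the maps $f\mapsto(\la f,\phi_{\dm^j;\vk}\ra)$ are uniformly bounded from $\dLp{2}$ to $\ell^2$ and a standard three-epsilon argument reduces \eqref{to0} to $f$ in a dense subset. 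I would take those $f$ with $\hat f$ bounded and compactly supported, say $|\hat f|\le M$ with $\hat f$ supported in the ball $B_R$ of radius $R$. On the Fourier side $\widehat{F_j}(\xi)=|\det\dm|^{j/2}\hat f((\dm^T)^j\xi)$, whose amplitude is $O(|\det\dm|^{j/2})\to0$ as $j\to-\infty$ while its support $(\dm^T)^{-j}B_R$ swells; the whole difficulty is that these two effects exactly cancel in $L^2$.

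The key quantitative device is a uniform bound on the periodized Gramian $G_j(\xi):=\sum_{\vn\in\dZ}|\widehat{F_j}(\xi+2\pi\vn)|^2$. Since $|\widehat{F_j}|^2\le|\det\dm|^j M^2$ on the ellipsoid $(\dm^T)^{-j}B_R$ and vanishes off it, $G_j(\xi)\le|\det\dm|^j M^2\,\#\{\vn:\xi+2\pi\vn\in(\dm^T)^{-j}B_R\}$. Because $\dm$ is expansive, for $j$ sufficiently negative this ellipsoid contains an arbitrarily large ball and standard lattice-point counting gives $\#\{\cdots\}\le 2^d(2\pi)^{-d}\mathrm{vol}((\dm^T)^{-j}B_R)=2^d(2\pi)^{-d}|\det\dm|^{-j}\mathrm{vol}(B_R)$, so $G_j(\xi)\le C''$ with $C''$ \emph{independent of $j$ and $\xi$}. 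Combining this with the Parseval identity $\sum_{\vk}|\la F_j,\phi(\cdot-\vk)\ra|^2=(2\pi)^{-d}\int_{[-\pi,\pi]^d}\big|\sum_{\vn}\widehat{F_j}(\xi+2\pi\vn)\overline{\hat\phi(\xi+2\pi\vn)}\big|^2\,d\xi$ and Cauchy--Schwarz in $\vn$ produces the uniform error estimate
\[
\sum_{\vk\in\dZ}|\la F_j,\phi(\cdot-\vk)\ra|^2\le\|G_j\|_\infty\,\|\phi\|_{\dLp{2}}^2\le C''\|\phi\|_{\dLp{2}}^2,
\]
valid for every $\phi\in\dLp{2}$ and all $j$ sufficiently negative.

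With these two bounds in hand I would finish by a finite/tail splitting of $\Phi$ together with a band-limiting of each generator. Given $\epsilon>0$, write $\Phi=\Phi_0\cup\Phi_1$ with $\Phi_0$ finite and $\sum_{\phi\in\Phi_1}\|\phi\|_{\dLp{2}}^2<\epsilon$; the displayed estimate controls the tail by $\sum_{\phi\in\Phi_1}\sum_{\vk}|\la F_j,\phi(\cdot-\vk)\ra|^2\le C''\epsilon$, uniformly in $j$. For each of the finitely many $\phi\in\Phi_0$ I replace $\hat\phi$ by its truncation $\hat\phi_0:=\hat\phi\,\chi_{B_\rho}$. For a band-limited $\phi_0$ only a bounded number $\kappa=\kappa(\rho)$ of shifts satisfy $\xi+2\pi\vn\in B_\rho$, so Cauchy--Schwarz over those $\kappa$ terms gives $\sum_{\vk}|\la F_j,\phi_0(\cdot-\vk)\ra|^2\le\kappa M^2|\det\dm|^j\|\phi_0\|_{\dLp{2}}^2\to0$: here the amplitude decay $|\det\dm|^j\to0$ is \emph{no longer} compensated by a growing number of interacting shifts, which is exactly where the limit is created. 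The band-limiting error is absorbed by the uniform estimate, $\sum_{\vk}|\la F_j,(\phi-\phi_0)(\cdot-\vk)\ra|^2\le C''\|\phi-\phi_0\|_{\dLp{2}}^2$, which is as small as we like for $\rho$ large. Letting first $j\to-\infty$ and then $\rho\to\infty$ shows $\lim_{j\to-\infty}\sum_{\vk}|\la F_j,\phi(\cdot-\vk)\ra|^2=0$ for each $\phi\in\Phi_0$; since $\Phi_0$ is finite, $\limsup_{j\to-\infty}\sum_{\phi\in\Phi}\sum_{\vk}|\la f,\phi_{\dm^j;\vk}\ra|^2\le C''\epsilon$, and letting $\epsilon\to0$ gives \eqref{to0}.

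I expect the main obstacle to be conceptual rather than computational: because the dilation $f\mapsto F_j$ is $L^2$-unitary, every naive frequency-domain estimate returns the non-decaying constant $C\|f\|^2$ (or $C''\|\phi\|^2$), reflecting conservation of $L^2$ mass. The genuine content of the lemma is isolating the low-frequency regime in which the vanishing amplitude $|\det\dm|^{j/2}$ defeats the growing lattice interaction; this forces both the band-limiting of the generators and the small-energy tail split, and it is the lattice-point count underlying $\|G_j\|_\infty\le C''$ (valid only once $(\dm^T)^{-j}B_R$ is genuinely large, i.e.\ using expansiveness of $\dm$) that makes the error terms controllable. The remaining steps --- the dilation identity, the Parseval and Poisson identities, and the density reduction --- are routine.
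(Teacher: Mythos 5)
Your proof is correct, and it takes a genuinely different route from the paper's. Both arguments share the outer shell: the unitary-dilation identity $\la f,\phi_{\dm^j;\vk}\ra=\la f_{\dm^{-j};\0},\phi(\cdot-\vk)\ra$ yields the $j$-uniform Bessel bound, and a three-epsilon density argument reduces \eqref{to0} to a dense class of $f$. But the paper's dense class and decay mechanism are purely space-domain: it takes $f=\chi_E$ with $E$ bounded, bounds $|\la f,\phi_{\dm^j;\vk}\ra|^2\le |E|\int_{\dm^jE-\vk}|\phi(x)|^2\,dx$, uses expansiveness to make the translates $\dm^jE-\vk$, $\vk\in\dZ$, pairwise disjoint once $j$ is very negative, and thereby dominates the full double sum by $|E|\int_{\cup_{\vk}(\dm^jE-\vk)}h$ with $h:=\sum_{\phi\in\Phi}|\phi|^2\in\dLp{1}$; the limit then follows from absolute continuity and tail-smallness of the integral of the single $L_1$ majorant $h$. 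You work instead on the frequency side with band-limited $f$, and your decay mechanism is the uniform periodized-Gramian bound $\|G_j\|_\infty\le C''$ (a lattice-point count in the expanding ellipsoid $(\dm^T)^{-j}B_R$, which is where expansiveness enters for you), combined with the observation that a band-limited generator interacts with only $\kappa(\rho)$ shifts, so the amplitude factor $|\det\dm|^j\to0$ finally wins. The structural trade-off: the paper's $L_1$ majorant handles the whole (possibly infinite) family $\Phi$ in one stroke and needs no Fourier analysis, whereas you need two extra layers --- the finite/tail splitting of $\Phi$ and the truncation $\hat\phi\,\chi_{B_\rho}$ of each generator in $\Phi_0$ --- both absorbed by your uniform Gramian estimate; what your route buys is the explicit quantitative bound $\sum_{\vk}|\la F_j,\phi(\cdot-\vk)\ra|^2\le C''\|\phi\|^2_{\dLp{2}}$, which isolates cleanly the competition between vanishing amplitude and growing lattice interaction. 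One caveat: your lattice count requires $(\dm^T)^{-j}B_R$ to grow in \emph{all} directions, so, unlike the paper's argument (which, as remarked after the lemma, extends to matrices satisfying \eqref{dm:special}), your proof would need extra care for non-expansive dilations producing eccentric ellipsoids; for the expansive case as stated, your argument is complete.
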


\begin{proof} We use a similar argument as in \cite[Lemma~3]{Han:wavelet}. We first prove that \eqref{to0} holds with $f=\chi_E$, the characteristic function of a bounded measurable set $E$. By calculation, for $f=\chi_E$, we have
\[
|\la f, \phi_{\dm^j; \vk}\ra|^2
=|\det \dm|^{j} \Big| \int_E \phi(\dm^j x-\vk)dx\Big|^2
\le |\det \dm|^{-j} \Big(\int_{\dm^{j} E-\vk} |\phi(x)| dx\Big)^2
\le |E| \int_{\dm^j E-\vk} |\phi(x)|^2 dx,
\]
where $|E|$ denotes the Lebesgue measure of $E$. Since $E$ is bounded and $\dm$ is an expansive matrix, we see that $\dm^j E-\vk, \vk\in \dZ$ are mutually disjoint as $j\to -\infty$. Therefore, we have
\[
\sum_{\phi\in \Phi} \sum_{\vk\in \dZ} |\la f, \phi_{\dm^j; \vk}\ra|^2
\le |E| \sum_{\phi\in \Phi} \sum_{\vk\in \dZ} \int_{\dm^j E-\vk} |\phi(x)|^2 dx=
|E| \int_{\cup_{\vk\in \dZ} (\dm^j E-\vk)} h(x) dx,
\]
where $h(x):=\sum_{\phi\in \Phi} |\phi(x)|^2$. By our assumption on $\Phi$, we have $h\in \dLp{1}$. Since $h\in \dLp{1}$, for arbitrary $\gep>0$, there exist positive constants $N$ and $c$ such that
$\int_{\{ y\in \dR : |y|\ge N\}} h(x) dx<\gep$ and $\int_K h(x) dx<\gep$ for every measurable set $K$ with $|K|<c$. Since $\dm$ is an expansive matrix, there exists an integer $J$ such that for all $j\le J$, the Lebesgue measure of the set $K_N:=\{ y\in \dR : |y|\le N\} \cap (\cup_{\vk\in \dZ} (\dm^j E-\vk))$ is less than $c$. Consequently, we deduce that
\[
\sum_{\phi\in \Phi} \sum_{\vk\in \dZ} |\la f, \phi_{\dm^j; \vk}\ra|^2
\le |E| \int_{\cup_{\vk\in \dZ} (\dm^j E-\vk)} h(x) dx
\le |E|\int_{K_N} h(x) dx+|E|\int_{\{ y\in \dR : |y|\ge N\} } h(x) dx
\le 2 |E|\gep
\]
for all $j\le J$. Therefore, we see that \eqref{to0} holds for $f=\chi_E$. Consequently, \eqref{to0} holds for any function $f$ which is a finite linear combination of characteristic functions of some bounded measurable sets.

Define operators $P_j: \dLp{2}\rightarrow l_2(\Z \times \Phi)$ by $P_j f:=\{ \la f, \phi_{\dm^j;\vk}\ra\}_{\vk\in \dZ, \phi\in \Phi}$. Since $\la f, \phi_{\dm^j; \vk}\ra=\la f_{\dm^{-j}; \0}, \phi(\cdot-\vk)\ra$, we can easily deduce that
\[
\|P_j f\|^2:=\sum_{\phi\in \Phi} \sum_{\vk\in \dZ} |\la f, \phi_{\dm^j;\vk}\ra|^2
=\sum_{\phi\in \Phi} \sum_{\vk\in \dZ} |\la f_{\dm^{-j};\0}, \phi(\cdot-\vk)\ra|^2\le C \| f_{\dm^{-j};\0}\|_{\dLp{2}}^2=C \|f\|^2_{\dLp{2}}
\]
for all $j\in \Z$. For any $f\in \dLp{2}$ and arbitrary $\gep>0$, there exists a function $g$, which is a finite linear combination of characteristic functions of bounded measurable sets, such that
$\|f-g\|_{\dLp{2}}\le \gep$. Since we proved that $\lim_{j\to -\infty} \|P_j g\|=0$, there exists $J\in \Z$ such that $\|P_j g\| \le \gep$ for all $j\le J$. Consequently, we have
\[
\|P_j f\| \le \|P_j g\|+\|P_j (f-g)\| \le \gep+\sqrt{C} \|f-g\|_{\dLp{2}} \le \gep(1+\sqrt{C}), \qquad \forall\; j\le J.
\]
Therefore, we conclude that $\lim_{j\to -\infty} \|P_j f\|=0$. That is, \eqref{to0} holds.
\end{proof}

We mention that \eqref{to0} in Lemma~\ref{lem:to0} holds for a more general $d\times d$ real-valued (not necessarily expansive) matrix. For example, by a slightly modified proof of Lemma~\ref{lem:to0}, \eqref{to0} holds if $\dm$ is a $d\times d$ real-valued matrix such that
\begin{equation}\label{dm:special}
|\det \dm|>1 \qquad \mbox{and}\qquad \limsup_{j\to -\infty} \sup_{\|x\|\le 1} \|\dm^j x\|<\infty.
\end{equation}
Without requiring that $\dm$ be an expansive matrix, all the results in this section still hold provided that \eqref{to0} holds even with $\lim$ in \eqref{to0} being replaced by $\liminf$.

\begin{prop}\label{prop:frame}
Let $\dm$ be a $d\times d$ real-valued invertible matrix. Let $\Phi$ and $\Psi$ be subsets of $\dLp{2}$. Suppose that $\WS_J(\Phi; \Psi)$ is a nonhomogeneous $\dm$-wavelet frame in $\dLp{2}$ for some integer $J$, that is, there exist positive constants $C_1$ and $C_2$ such that
\begin{equation}\label{nws:frame}
\begin{split}
C_1 \|f\|_{\dLp{2}}^2 &\le \sum_{\phi\in \Phi} \sum_{\vk\in \dZ} |\la f, \phi_{\dm^J; \vk}\ra|^2+
\sum_{j=J}^\infty \sum_{\psi\in \Psi} \sum_{\vk\in \dZ} |\la f, \psi_{\dm^j; \vk}\ra|^2
\le C_2 \|f\|_{\dLp{2}}^2,\\
&\qquad\qquad\qquad\qquad \qquad \forall \, f\in \dLp{2}.
\end{split}
\end{equation}
Then \eqref{nws:frame} holds for all integers $J$, in other words, $\WS_J(\Phi; \Psi)$ is a nonhomogeneous $\dm$-wavelet frame with the same lower and upper frame bounds for all integers $J$. If in addition $\dm$ is an expansive matrix and $\sum_{\phi\in \Phi} \|\phi\|_{\dLp{2}}^2<\infty$, then the homogeneous $\dm$-wavelet system $\WS(\Psi)$ is also a frame in $\dLp{2}$ with the same lower and upper frame bounds:
\begin{equation}\label{hws:frame}
C_1 \|f\|_{\dLp{2}}^2\le \sum_{j\in \Z} \sum_{\psi\in \Psi}
\sum_{\vk\in \dZ} |\la f, \psi_{\dm^j; \vk}\ra|^2\le C_2 \|f\|_{\dLp{2}}^2, \qquad \forall \, f\in \dLp{2}.
\end{equation}
\end{prop}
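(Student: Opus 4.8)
The plan is to exploit a dilation symmetry to move the frame inequality freely between scales, and then to let the coarsest scale tend to $-\infty$, using Lemma~\ref{lem:to0} to discard the contribution of the generators in $\Phi$. Write $Q_J(f)$ for the middle expression in \eqref{nws:frame}, and split it as $Q_J(f)=A_J(f)+B_J(f)$ with $A_J(f):=\sum_{\phi\in\Phi}\sum_{\vk\in\dZ}|\la f,\phi_{\dm^J;\vk}\ra|^2$ and $B_J(f):=\sum_{j=J}^\infty\sum_{\psi\in\Psi}\sum_{\vk\in\dZ}|\la f,\psi_{\dm^j;\vk}\ra|^2$.

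First, for the scale-independence claim, I would record the identities $\la f_{\dm^m;\0},\phi_{\dm^J;\vk}\ra=\la f,\phi_{\dm^{J-m};\vk}\ra$ and $\la f_{\dm^m;\0},\psi_{\dm^j;\vk}\ra=\la f,\psi_{\dm^{j-m};\vk}\ra$ for all $m,J,j\in\Z$ and $\vk\in\dZ$. This is a routine change of variables in which the $|\det\dm|^{\,\cdot/2}$ normalizations of \eqref{tdm} cancel exactly, so I would not grind it out. A reindexing $j\mapsto j-m$ of the wavelet sum then gives $Q_J(f_{\dm^m;\0})=Q_{J-m}(f)$. Since $f\mapsto f_{\dm^m;\0}$ is unitary on $\dLp{2}$, applying the hypothesis \eqref{nws:frame} at level $J$ to the function $f_{\dm^m;\0}$ yields $C_1\|f\|_{\dLp{2}}^2\le Q_{J-m}(f)\le C_2\|f\|_{\dLp{2}}^2$; as $m$ ranges over $\Z$ so does $J-m$, so \eqref{nws:frame} holds at every integer level with the same bounds. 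This settles the first assertion.

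For the homogeneous conclusion, the first part gives $C_1\|f\|_{\dLp{2}}^2\le A_J(f)+B_J(f)\le C_2\|f\|_{\dLp{2}}^2$ for \emph{every} $J$. The sequence $B_J(f)$ is nondecreasing as $J\downarrow-\infty$ and bounded above by $C_2\|f\|_{\dLp{2}}^2$, hence $B_J(f)\uparrow H(f):=\sum_{j\in\Z}\sum_{\psi\in\Psi}\sum_{\vk\in\dZ}|\la f,\psi_{\dm^j;\vk}\ra|^2\le C_2\|f\|_{\dLp{2}}^2$; this already establishes the upper bound in \eqref{hws:frame} together with the finiteness of $H(f)$.

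The crux is to show $A_J(f)\to0$ as $J\to-\infty$, and here I would invoke Lemma~\ref{lem:to0}. Its hypotheses are met: $\dm$ is expansive and $\sum_{\phi\in\Phi}\|\phi\|_{\dLp{2}}^2<\infty$ by assumption, while the required Bessel bound $\sum_{\phi\in\Phi}\sum_{\vk\in\dZ}|\la f,\phi(\cdot-\vk)\ra|^2\le C\|f\|_{\dLp{2}}^2$ is precisely $A_0(f)\le C_2\|f\|_{\dLp{2}}^2$ (noting $\phi_{\dm^0;\vk}=\phi(\cdot-\vk)$), which is the level-$0$ case of the first part. Lemma~\ref{lem:to0} then delivers $\lim_{J\to-\infty}A_J(f)=0$, so letting $J\to-\infty$ in $C_1\|f\|_{\dLp{2}}^2\le A_J(f)+B_J(f)$ gives $C_1\|f\|_{\dLp{2}}^2\le H(f)$, completing \eqref{hws:frame}. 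I expect the only genuine point requiring care to be matching the Bessel hypothesis of Lemma~\ref{lem:to0} to the $J=0$ frame inequality; the rest is the dilation change of variables and a monotone passage to the limit.
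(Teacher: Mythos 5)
Your proposal is correct and follows essentially the same route as the paper: the dilation identity (the paper's \eqref{simple}) transfers the frame inequality between all scales, and then one lets $J\to-\infty$, using the monotone limit of the wavelet part together with Lemma~\ref{lem:to0} (whose Bessel hypothesis is exactly the level-$0$ upper frame bound) to eliminate the $\Phi$ contribution. Your explicit verification that Lemma~\ref{lem:to0}'s hypothesis matches $A_0(f)\le C_2\|f\|_{\dLp{2}}^2$ is a detail the paper leaves implicit, but the argument is the same.
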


\begin{proof} By the following simple fact that for $f,g\in \dLp{2}$ and invertible $d\times d$ matrices $U$ and $V$,
\begin{equation}\label{simple}
\la f_{U; \vk,\vn}, g_{U; \vk,\vn} \ra =\la f, g\ra \quad\mbox{and} \quad \la f_{V; \0,\0}, \psi_{U; \vk,\vn}\ra=\la f, \psi_{U V^{-1}; \vk,\vn}\ra,\qquad \vk,\vn\in \dZ,
\end{equation}
it is straightforward to see that if \eqref{nws:frame} holds for one integer $J$, then \eqref{nws:frame} holds for all integers $J$.

Next we prove that \eqref{hws:frame} holds. Since \eqref{nws:frame} holds for all integers $J$, it is easy to see that
\[
\sum_{j=J}^\infty \sum_{\psi\in \Psi} \sum_{\vk\in \dZ} |\la f, \psi_{\dm^j; \vk}\ra|^2\le C_2 \|f\|_{\dLp{2}}^2
\]
and
\[
C_1\|f\|^2_{\dLp{2}}-\sum_{\phi\in \Phi} \sum_{\vk\in \dZ} |\la f, \phi_{\dm^J; \vk}\ra|^2\le
\sum_{j\in\Z} \sum_{\psi\in \Psi} \sum_{\vk\in \dZ} |\la f, \psi_{\dm^j; \vk}\ra|^2
\]
for all integers $J\in \Z$ and $f\in \dLp{2}$.
Taking $J\to -\infty$ in the above two inequalities and using \eqref{to0} for the second inequality, we deduce that \eqref{hws:frame} holds.
\end{proof}

The best possible constants $C_1$ and $C_2$ in \eqref{nws:frame} are called the lower frame bound and the upper frame bound of $\WS_J(\Phi; \Psi)$, respectively.

Let $\dm$ be a $d\times d$ real-valued invertible matrix. Let
\begin{equation}\label{Phi:Psi}
\Phi=\{\phi^1, \ldots, \phi^\mphi\}, \quad
\Psi=\{\psi^1, \ldots, \psi^\mpsi\} \quad\mbox{and}\quad
\tilde \Phi=\{\tilde\phi^1, \ldots, \tilde \phi^\mphi\},\quad
 \tilde \Psi=\{\tilde \psi^1, \ldots, \tilde \psi^\mpsi\}
\end{equation}
be subsets of $\dLp{2}$, where $\mphi, \mpsi\in \N \cup \{0, +\infty\}$.
Let $\WS_J(\Phi; \Psi)$ be defined in \eqref{nws} and $\WS_J(\tilde \Phi; \tilde \Psi)$ be defined similarly.
We say that the pair
$(\WS_J(\Phi; \Psi), \WS_J(\tilde \Phi; \tilde \Psi))$
is {\it a pair of nonhomogeneous dual $\dm$-wavelet frames in $\dLp{2}$} if each of $\WS_J(\Phi; \Psi)$ and $\WS_J(\tilde \Phi; \tilde \Psi)$ is a nonhomogeneous $\dm$-wavelet frame in $\dLp{2}$ and the following identity holds
\begin{equation}\label{dwf}
\la f, g\ra=\sum_{\ell=1}^\mphi \sum_{\vk\in \dZ} \la f, \phi^\ell_{\dm^J; \vk}\ra\la \tilde \phi^\ell_{\dm^J; \vk}, g\ra+\sum_{j=J}^\infty \sum_{\ell=1}^\mpsi \sum_{\vk\in \dZ} \la f, \psi^\ell_{\dm^j; \vk}\ra\la \tilde \psi^\ell_{\dm^j; \vk}, g\ra, \quad \forall\; f,g \in \dLp{2}.
\end{equation}

For pairs of nonhomogeneous dual wavelet frames in $\dLp{2}$, we have the following result.

\begin{prop}\label{prop:dwf}
Let $\dm$ be a $d\times d$ real-valued invertible matrix.
Let $\Phi,\Psi, \tilde \Phi, \tilde \Psi$ in \eqref{Phi:Psi} be subsets of $\dLp{2}$. Suppose that $(\WS_J(\Phi; \Psi), \WS_J(\tilde \Phi; \tilde \Psi))$
is a pair of nonhomogeneous dual $\dm$-wavelet frames in $\dLp{2}$ for some integer $J$. Then it is a pair of nonhomogeneous dual $\dm$-wavelet frames in $\dLp{2}$ for all integers $J$. If in addition $\dm$ is an expansive matrix and $\sum_{\phi\in \Phi} \|\phi\|_{\dLp{2}}^2<\infty$, then the pair $(\WS(\Psi),\WS(\tilde \Psi))$
is a pair of homogeneous dual $\dm$-wavelet frames in $\dLp{2}$, that is, each of $\WS(\Psi)$ and $\WS(\tilde \Psi)$ is a frame in $\dLp{2}$ and the following identity holds:
\begin{equation}\label{hws:dwf}
\la f, g\ra= \sum_{j\in \Z} \sum_{\ell=1}^\mpsi\sum_{\vk\in \dZ} \la f, \psi^\ell_{\dm^j; \vk}\ra\la \tilde \psi^\ell_{\dm^j; \vk}, g\ra, \qquad \forall\; f,g \in \dLp{2}.
\end{equation}
\end{prop}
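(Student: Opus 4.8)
The plan is to follow the architecture of the proof of Proposition~\ref{prop:frame}, but with the single Bessel quadratic form replaced by the bilinear duality form, treating the two assertions (invariance in $J$, and passage to the homogeneous systems) in turn. For fixed $f,g$ let $B_J(f,g)$ denote the right-hand side of \eqref{dwf}. To see that $B_J(f,g)=\la f,g\ra$ for one $J$ forces it for all $J$, I would substitute $f\mapsto f_{\dm^m;\0,\0}$ and $g\mapsto g_{\dm^m;\0,\0}$ into the level-$J$ identity and invoke \eqref{simple}: the first identity there gives $\la f_{\dm^m;\0,\0},g_{\dm^m;\0,\0}\ra=\la f,g\ra$, while the second turns $\la f_{\dm^m;\0,\0},\phi^\ell_{\dm^J;\vk}\ra$ into $\la f,\phi^\ell_{\dm^{J-m};\vk}\ra$, and likewise (after conjugation) $\la\tilde\phi^\ell_{\dm^J;\vk},g_{\dm^m;\0,\0}\ra$ into $\la\tilde\phi^\ell_{\dm^{J-m};\vk},g\ra$, with the analogous replacements for the $\psi,\tilde\psi$ pairings. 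Re-indexing the scale sum by $j\mapsto j-m$ converts the level-$J$ identity evaluated at the dilated arguments into the level-$(J-m)$ identity; since $m\in\Z$ is arbitrary and each system is already a frame for all $J$ by Proposition~\ref{prop:frame}, this establishes the duality for every integer $J$.

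For the second assertion, assume $\dm$ expansive and $\sum_{\phi\in\Phi}\|\phi\|^2_{\dLp{2}}<\infty$, and write the level-$J$ identity as $\la f,g\ra=P_J(f,g)+\sum_{j=J}^\infty S_j(f,g)$, where $P_J(f,g):=\sum_\ell\sum_{\vk\in\dZ}\la f,\phi^\ell_{\dm^J;\vk}\ra\la\tilde\phi^\ell_{\dm^J;\vk},g\ra$ is the coarse-scale term. Applying Cauchy--Schwarz to $P_J$, the factor $\sum_\ell\sum_{\vk\in\dZ}|\la f,\phi^\ell_{\dm^J;\vk}\ra|^2$ tends to $0$ as $J\to-\infty$ by Lemma~\ref{lem:to0} (its hypotheses on $\Phi$ hold, the Bessel bound furnished by Proposition~\ref{prop:frame}), while the factor $\sum_\ell\sum_{\vk\in\dZ}|\la\tilde\phi^\ell_{\dm^J;\vk},g\ra|^2$ stays bounded by $C_2\|g\|^2_{\dLp{2}}$ uniformly in $J$ by the upper frame bound of $\WS_J(\tilde\Phi;\tilde\Psi)$. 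Hence $P_J(f,g)\to 0$, and since $\la f,g\ra$ is independent of $J$, letting $J\to-\infty$ yields the homogeneous duality identity \eqref{hws:dwf}.

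The delicate point, and the step I expect to be the main obstacle, is confirming that \emph{both} homogeneous systems are frames, because the hypothesis imposes summability only on $\Phi$, not on $\tilde\Phi$. Proposition~\ref{prop:frame} applies directly to $\WS(\Psi)$ and shows it is a frame; but for $\WS(\tilde\Psi)$ neither Lemma~\ref{lem:to0} nor Proposition~\ref{prop:frame} is available to produce a lower bound. The resolution is that the upper (Bessel) bound $\sum_{j\in\Z}\sum_\ell\sum_{\vk\in\dZ}|\la f,\tilde\psi^\ell_{\dm^j;\vk}\ra|^2\le C_2\|f\|^2_{\dLp{2}}$ requires no summability at all: it follows by monotone convergence from the level-$J$ upper bounds, which hold for every $J$. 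Granting this Bessel bound, I would extract the lower frame bound for $\WS(\tilde\Psi)$ from the duality identity itself: setting $g=f$ in \eqref{hws:dwf} and applying Cauchy--Schwarz gives $\|f\|^2_{\dLp{2}}\le(\sum_{j,\ell,\vk}|\la f,\psi^\ell_{\dm^j;\vk}\ra|^2)^{1/2}(\sum_{j,\ell,\vk}|\la f,\tilde\psi^\ell_{\dm^j;\vk}\ra|^2)^{1/2}$, and inserting the Bessel bound of $\WS(\Psi)$ (valid since $\WS(\Psi)$ is a frame) yields a positive lower bound for the $\tilde\Psi$-coefficients. The symmetric computation reconfirms the lower bound for $\WS(\Psi)$, so both systems are frames and \eqref{hws:dwf} exhibits them as a dual pair.
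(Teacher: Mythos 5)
Your proposal is correct, and on the two main steps it coincides with the paper's own proof: the invariance in $J$ comes from \eqref{simple} combined with Proposition~\ref{prop:frame}, and the identity \eqref{hws:dwf} comes from letting $J\to-\infty$ in the level-$J$ identity, killing the coarse-scale term by Cauchy--Schwarz, Lemma~\ref{lem:to0} (applied to $\Phi$), and the uniform upper frame bound of the dual system --- precisely what the paper means when it says the identity ``can be proved similarly as in the proof of Proposition~\ref{prop:frame}.'' Where you genuinely differ is in establishing that $\WS(\tilde \Psi)$ is a frame. The paper disposes of this by asserting that ``by Proposition~\ref{prop:frame} again, each of $\WS(\Psi)$ and $\WS(\tilde \Psi)$ is a frame,'' but applying that proposition to the dual system requires $\sum_{\tilde \phi\in \tilde \Phi} \|\tilde \phi\|^2_{\dLp{2}}<\infty$, a condition not among the stated hypotheses (only $\Phi$ is assumed square-summable). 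Your replacement argument --- the Bessel bound for $\WS(\tilde \Psi)$ obtained by monotone convergence from the level-$J$ upper bounds, which needs no summability on $\tilde \Phi$, followed by the lower frame bound extracted from \eqref{hws:dwf} with $g=f$ via Cauchy--Schwarz against the upper bound of $\WS(\Psi)$ --- is valid and proves the proposition under the asymmetric hypothesis exactly as stated, whereas the paper's one-line appeal is only literally justified under a tacit symmetric assumption on $\tilde \Phi$. The trade-off is minor: Proposition~\ref{prop:frame}, when it applies, transfers the \emph{same} frame bounds to the homogeneous system, while your duality argument yields the possibly weaker lower bound $C_2^{-1}$ for $\WS(\tilde \Psi)$, with $C_2$ the upper bound of $\WS(\Psi)$; since the proposition only asserts that each homogeneous system is a frame, this costs nothing here.
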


\begin{proof} By \eqref{simple}, we see that \eqref{dwf} holds for all integers $J$. By Proposition~\ref{prop:frame}, $(\WS_J(\Phi; \Psi), \WS_J(\tilde \Phi; \tilde \Psi))$ is a pair of nonhomogeneous dual $\dm$-wavelet frames in $\dLp{2}$ for all integers $J$.
For a real-valued expansive matrix $\dm$, by Proposition~\ref{prop:frame} again, each of $\WS(\Psi)$ and $\WS(\tilde \Psi)$ is a frame in $\dLp{2}$. By Lemma~\ref{lem:to0}, the identity in \eqref{hws:dwf} can be proved similarly as in the proof of Proposition~\ref{prop:frame}.
\end{proof}

Next, we discuss nonredundant nonhomogeneous wavelet systems.
Let $\Phi$ and $\Psi$ be subsets of $\dLp{2}$. We say that $\WS_J(\Phi; \Psi)$ is {\it a nonhomogeneous Riesz $\dm$-wavelet basis} in $\dLp{2}$ if the linear span of
$\WS_J(\Phi; \Psi)$
is dense in $\dLp{2}$ and there exist two positive constants $C_3$ and $C_4$ such that
{\small \begin{equation}\label{riesz}
\begin{split}
C_3 \Big( \sum_{\phi\in \Phi} \sum_{\vk\in \dZ} |v_{\vk,\phi}|^2
\!+\!\sum_{j=J}^\infty \sum_{\psi\in \Psi} \sum_{\vk\in \dZ} |w_{j;\vk,\psi}|^2\Big) &\!\le\!
\Big\| \sum_{\phi\in \Phi} \sum_{\vk\in \dZ} v_{\vk,\phi} \phi_{\dm^J; \vk}
\!+\!\sum_{j=J}^\infty \sum_{\psi\in \Psi} \sum_{\vk\in \dZ} w_{j;\vk,\psi} \psi_{\dm^j; \vk}\Big\|^2_{\dLp{2}}\\
&\le C_4 \Big( \sum_{\phi\in \Phi} \sum_{\vk\in \dZ} |v_{\vk,\phi}|^2
+\sum_{j=J}^\infty \sum_{\psi\in \Psi} \sum_{\vk\in \dZ} |w_{j;\vk,\psi}|^2\Big)
\end{split}
\end{equation}
}
for all finitely supported sequences $\{ v_{\vk,\phi}\}_{\vk\in \dZ,\phi\in \Phi}$ and $\{ w_{j;\vk,\psi}\}_{j\ge J, \vk\in \dZ,\psi\in \Psi}$, where the best possible constants $C_3$ and $C_4$ are called the lower Riesz bound and the upper Riesz bound of $\WS_J(\Phi;\Psi)$, respectively. It is well known that
$\WS_J(\Phi;\Psi)$ is a Riesz basis of $\dLp{2}$ if and only if it is a frame of $\dLp{2}$ and it is {\it $l_2$-linearly independent}, that is, if
\begin{equation}\label{l2:li}
\sum_{\phi\in \Phi} \sum_{\vk\in \dZ} v_{\vk,\phi} \phi_{\dm^J; \vk}
+\sum_{j=J}^\infty \sum_{\psi\in \Psi} \sum_{\vk\in \dZ} w_{j;\vk,\psi} \psi_{\dm^j; \vk}=0
\end{equation}
in $\dLp{2}$ for square summable sequences $\{ v_{\vk,\phi}\}_{\vk\in \dZ, \phi\in \Phi}$ and $\{ w_{j;\vk,\psi}\}_{j\ge J, \vk\in \dZ,\psi\in \Psi}$, then $v_{\vk,\phi}=0$ and $w_{j;\vk,\psi}=0$ for all $j\ge J$, $\vk\in \dZ$ and $\phi\in \Phi, \psi\in \Psi$. In fact, define $\Wop: l_2(\dZ\times \Phi)\times l_2(\Z\times \dZ\times \Psi)\rightarrow \dLp{2}$ such that $\Wop$ maps $(\{ v_{\vk,\phi}\}_{\vk\in \dZ, \phi\in \Phi}, \{ w_{j;\vk,\psi}\}_{j\ge J, \vk\in \dZ,\psi\in \Psi})$ to the function on the left-hand side of \eqref{l2:li}. Then $\WS_J(\Phi; \Psi)$ is a Riesz basis in $\dLp{2}$ if and only if $\Wop$ is a well-defined bounded and invertible operator. Also, $\WS_J(\Phi; \Psi)$ is a frame in $\dLp{2}$ if and only if $\Wop$ is a well-defined bounded onto operator with the range of its adjoint operator $\Wop^*$ being closed.
Now it is straightforward to see that $\WS_J(\Phi;\Psi)$ is a Riesz basis of $\dLp{2}$ if and only if it is a frame of $\dLp{2}$ and it is $l_2$-linearly independent. Thus, a nonhomogeneous Riesz $\dm$-wavelet basis is nonredundant. Note that a nonhomogeneous orthonormal $\dm$-wavelet basis is a particular case of a nonhomogeneous Riesz $\dm$-wavelet basis.

For nonhomogeneous Riesz wavelet bases in $\dLp{2}$, we have the following result.

\begin{theorem}\label{thm:rw} Let $\dm$ be a $d\times d$ real-valued invertible matrix. Let $\Phi$ and $\Psi$ be subsets of $\dLp{2}$. Suppose that $\WS_J(\Phi; \Psi)$ is a nonhomogeneous Riesz $\dm$-wavelet basis in $\dLp{2}$ satisfying \eqref{riesz} for some integer $J$. Then \eqref{riesz} holds for all integers $J$ and $\WS_J(\Phi; \Psi)$ is a nonhomogeneous Riesz $\dm$-wavelet basis in $\dLp{2}$ with the same lower and upper Riesz bounds  for all integers $J$.
If $\dm$ is a $d\times d$ real-valued expansive matrix and $\sum_{\phi\in \Phi} \|\phi\|^2_{\dLp{2}}<\infty$, then $\WS(\Psi)$ is a homogeneous Riesz $\dm$-wavelet basis in $\dLp{2}$ with the same lower and upper Riesz bounds, that is, the linear span of $\WS(\Psi)$ is dense in $\dLp{2}$ and
\begin{equation}\label{h:riesz}
C_3 \sum_{j\in \Z} \sum_{\psi\in \Psi} \sum_{\vk\in \dZ} |w_{j;\vk,\psi}|^2 \le
\Big\| \sum_{j\in \Z} \sum_{\psi\in \Psi} \sum_{\vk\in \dZ} w_{j;\vk,\psi} \psi_{\dm^j; \vk}\Big\|^2_{\dLp{2}}
\le C_4 \sum_{j\in \Z} \sum_{\psi\in \Psi} \sum_{\vk\in \dZ} |w_{j;\vk,\psi}|^2
\end{equation}
for all finitely supported sequences $\{w_{j;\vk,\psi}\}_{j\in \Z, \vk\in \dZ, \psi\in \Psi}$.
\end{theorem}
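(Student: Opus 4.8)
The plan is to handle the three assertions in turn: the $J$-invariance of \eqref{riesz} comes from \eqref{simple}, the two-sided estimate \eqref{h:riesz} comes from the elementary fact that a finitely supported coefficient array lives above a single coarse scale, and the completeness together with the sharpness of the constants is recovered from Proposition~\ref{prop:frame} (hence ultimately from Lemma~\ref{lem:to0}).

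For the first assertion I would use the dilation $f\mapsto f_{\dm;\0}$. By the first identity in \eqref{simple} (with $U=\dm$ and $\vk=\vn=\0$) this is an isometry of $\dLp{2}$, and a direct computation shows it carries $\phi_{\dm^J;\vk}$ to $\phi_{\dm^{J+1};\vk}$ and $\psi_{\dm^j;\vk}$ to $\psi_{\dm^{j+1};\vk}$; thus it maps $\WS_J(\Phi;\Psi)$ bijectively onto $\WS_{J+1}(\Phi;\Psi)$, sending a level-$J$ combination to the corresponding level-$(J+1)$ combination with the same $v$-coefficients and the reindexed $w$-coefficients $w'_{j;\vk,\psi}=w_{j-1;\vk,\psi}$. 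Since the three quantities appearing in \eqref{riesz} — the two $l_2$-sums of the coefficient array and the middle $\dLp{2}$-norm — are all preserved, \eqref{riesz} passes from level $J$ to level $J+1$ with the identical constants $C_3,C_4$, and induction yields all integers $J$.

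For the two-sided estimate \eqref{h:riesz} I would argue directly, with no limit involved. Given a finitely supported sequence $\{w_{j;\vk,\psi}\}_{j\in\Z}$, choose an integer $J$ small enough that $w_{j;\vk,\psi}=0$ for all $j<J$, and apply \eqref{riesz} at that level $J$ with the $\phi$-coefficients set to $v_{\vk,\phi}=0$. The $\phi$-blocks then disappear from all three sides, the sums $\sum_{j\ge J}$ agree with $\sum_{j\in\Z}$ for this particular array, and what survives is precisely \eqref{h:riesz} with the same constants $C_3,C_4$.

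It remains to show that $\WS(\Psi)$ really is a Riesz basis and that $C_3,C_4$ are its sharp bounds. Since a Riesz basis is in particular a frame, the extra hypotheses ($\dm$ expansive and $\sum_{\phi\in\Phi}\|\phi\|^2_{\dLp{2}}<\infty$) let me invoke Proposition~\ref{prop:frame} to conclude that $\WS(\Psi)$ is a frame with frame bounds $C_1,C_2$; its lower frame bound being positive forces any $f$ orthogonal to $\WS(\Psi)$ to vanish, so the linear span of $\WS(\Psi)$ is dense in $\dLp{2}$. Combined with \eqref{h:riesz}, this shows $\WS(\Psi)$ is a Riesz basis. For sharpness I would invoke the standard fact that for a Riesz basis the best Riesz bounds coincide with the best frame bounds: for the nonhomogeneous system this gives $C_3=C_1$ and $C_4=C_2$, and Proposition~\ref{prop:frame} transfers the exact frame bounds $C_1,C_2$ to $\WS(\Psi)$, whence the homogeneous Riesz bounds are again exactly $C_3,C_4$. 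The main obstacle is exactly this last point: discarding the coarse-scale generators $\phi_{\dm^J;\vk}$ could in principle sharpen the constants, and the reason it does not is that both completeness and the exact frame bounds survive the passage $J\to-\infty$, which is what Lemma~\ref{lem:to0}, through Proposition~\ref{prop:frame}, guarantees.
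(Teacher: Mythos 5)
Your first three steps are exactly the paper's own proof: the unitary dilation $f\mapsto f_{\dm;\0}$ transfers \eqref{riesz} and the density of the span between consecutive levels $J$; setting $v_{\vk,\phi}=0$ and choosing $J$ below the support of the finitely supported array $\{w_{j;\vk,\psi}\}$ yields \eqref{h:riesz} with the same constants; and Proposition~\ref{prop:frame} (hence Lemma~\ref{lem:to0}) gives that $\WS(\Psi)$ is a frame, whose positive lower frame bound in \eqref{hws:frame} forces $f\perp \WS(\Psi)\Rightarrow f=0$, i.e., density of the span. Up to that point the proof is complete and correct, because the theorem's phrase ``with the same lower and upper Riesz bounds'' is explicated by its own ``that is'' clause to mean precisely: density of the span of $\WS(\Psi)$ together with the validity of the inequalities \eqref{h:riesz} with the constants $C_3, C_4$.

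Your final paragraph, however, asserts more than the theorem claims, and the argument for it has a genuine flaw. Proposition~\ref{prop:frame} states only that \eqref{hws:frame} \emph{holds} with the constants $C_1, C_2$; it does not state that $C_1, C_2$ are the \emph{optimal} frame bounds of $\WS(\Psi)$, so ``transfers the exact frame bounds'' is an over-reading (the cited fact that optimal Riesz bounds coincide with optimal frame bounds for a Riesz basis is itself correct, but it is applied to bounds whose optimality for the homogeneous system was never established). Indeed the claimed equality is false in general: in dimension one with $\dm=2$, let $\psi$ be the Haar wavelet and $\phi=\chi_{[0,1)}+\tfrac{1}{2}\chi_{[1,2)}$. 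The integer shifts of $\phi$ span the same space as the shifts of $\chi_{[0,1)}$, which is orthogonal to every $\psi_{2^j;k}$ with $j\ge 0$, so $\WS_0(\{\phi\};\{\psi\})$ is a nonhomogeneous Riesz basis whose optimal Riesz bounds are $\tfrac14$ and $\tfrac94$ (the extrema of $|1+\tfrac{1}{2}e^{-\iu\xi}|^2$), while $\WS(\{\psi\})$ is an orthonormal basis with optimal bounds $1$ and $1$. Thus discarding the coarse-scale generators can strictly improve the constants; what survives the passage $J\to-\infty$ is only the validity of \eqref{h:riesz} with $C_3, C_4$, which your second step already establishes without any limiting argument. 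Delete the sharpness claim and your proof coincides with the paper's.
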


\begin{proof}
Since \eqref{riesz} holds, by $(\psi_{\dm^j;\vk})_{\dm^n;\0}=\psi_{\dm^{j+n};\vk}$ and $\|f_{\dm^j;\0}\|_{\dLp{2}}^2=\|f\|_{\dLp{2}}^2$, it is easy to verify that \eqref{riesz} holds for all integers $J$.
Note that $\WS_{J+n}(\Phi; \Psi)=\{ f_{\dm^n;\0}\; : \; f\in \WS_J(\Phi; \Psi)\}$ for all integers $J$ and $n$.
It is also easy to deduce that if the linear span of $\WS_J(\Phi; \Psi)$ is dense in $\dLp{2}$ for one integer $J$, then it is also dense in $\dLp{2}$ for all integers $J$.  Consequently, $\WS_J(\Phi; \Psi)$ is a nonhomogeneous Riesz $\dm$-wavelet basis in $\dLp{2}$ for all integers $J$ with the same lower and upper Riesz bounds.

Since \eqref{riesz} holds for all integers $J$, setting $v_{\vk,\phi}=0$ in \eqref{riesz}, we can easily deduce that \eqref{h:riesz} holds  for all finitely supported sequences $\{w_{j;\vk,\psi}\}_{j\in \Z, \vk\in \dZ, \psi\in \Psi}$. Since $\WS_J(\Phi; \Psi)$ is a Riesz basis in $\dLp{2}$,
we see that $\WS_J(\Phi;\Psi)$ must be a frame in $\dLp{2}$. Since $\dm$ is an expansive matrix and $\sum_{\phi\in \Phi} \|\phi\|_{\dLp{2}}^2<\infty$, by Proposition~\ref{prop:frame}, $\WS(\Psi)$ is a frame in $\dLp{2}$.
Therefore, by \eqref{hws:frame}, we see that $f\perp \WS(\Psi)$ implies $f=0$. Thus, we conclude that the linear span of $\WS(\Psi)$ is dense in $\dLp{2}$. Therefore, $\WS(\Psi)$ is a Riesz basis of $\dLp{2}$.
\end{proof}

Let $\gd$ denote the {\it Dirac sequence} such that $\gd(0)=1$ and $\gd(\vk)=0$ for all $\vk\ne 0$.  Let $\Phi, \Psi, \tilde \Phi, \tilde \Psi$ in \eqref{Phi:Psi} be subsets of $\dLp{2}$. We say that the pair
$(\WS_J(\Phi;\Psi), \WS_J(\tilde \Phi; \tilde \Psi))$
is {\it a pair of nonhomogeneous biorthogonal $\dm$-wavelet bases} in $\dLp{2}$ if each of $\WS_J(\Phi; \Psi)$ and $\WS_J(\tilde \Phi; \tilde \Psi)$ is a nonhomogeneous Riesz $\dm$-wavelet basis in $\dLp{2}$ and the following biorthogonality relation holds:
\begin{align}
&\la \phi^\ell_{\dm^J;\vk}, \tilde \phi^{\ell'}_{\dm^J; \vk'}\ra=\gd(\vk-\vk')\gd(\ell-\ell'),\quad  \la \psi^n_{\dm^j;\vk}, \tilde \phi^{\ell'}_{\dm^{J}; \vk'}\ra=0,\quad \la \phi^\ell_{\dm^J;\vk}, \tilde \psi^{n'}_{\dm^{j'}; \vk'}\ra=0,\label{bio:phi}\\
&\la \psi^n_{\dm^j;\vk}, \tilde \psi^{n'}_{\dm^{j'}; \vk'}\ra=\gd(\vk-\vk')\gd(n-n')\gd(j-j'),\label{bio:psi}
\end{align}
for all $\vk, \vk'\in \dZ$, $j,j'\in \Z$ such that $j, j'\ge J$, $\ell, \ell'=1, \ldots, \mphi$, and $n, n'=1, \ldots, \mpsi$.

It is a standard result, which can be easily proved by the same argument after \eqref{l2:li}, that a pair $(\WS_J(\Phi;\Psi), \WS_J(\tilde \Phi; \tilde \Psi))$ is a pair of nonhomogeneous biorthogonal $\dm$-wavelet bases in $\dLp{2}$ if and only if it is a pair of nonhomogeneous dual $\dm$-wavelet frames in $\dLp{2}$ and the biorthogonality conditions in \eqref{bio:phi} and \eqref{bio:psi} are satisfied.
For pairs of nonhomogeneous biorthogonal $\dm$-wavelet bases in $\dLp{2}$, we have the following result which includes Theorem~\ref{thm:rw:number} as a special case:

\begin{theorem}\label{thm:bw}
Let $\dm$ be a $d\times d$ real-valued invertible matrix.
Let $\Phi,\Psi, \tilde \Phi, \tilde \Psi$ in \eqref{Phi:Psi} be finite subsets of $\dLp{2}$. Suppose that the pair $(\WS_J(\Phi;\Psi), \WS_J(\tilde \Phi; \tilde \Psi))$ is a pair of nonhomogeneous biorthogonal $\dm$-wavelet bases in $\dLp{2}$ for some integer $J$. Then
it is a pair of nonhomogeneous biorthogonal $\dm$-wavelet bases in $\dLp{2}$ for all integers $J$.
Moreover, there exist $\mphi\times \mphi$ matrices $\fa_\vk, \tilde \fa_\vk$ and $\mpsi\times \mphi$ matrices $\fb_\vk, \tilde \fb_\vk, \vk\in \dZ$ of $2\pi\dZ$-periodic measurable functions in $\dTLp{2}$ such that
\begin{align}
&e^{-\iu \vk\cdot \dm^T\xi} \hat\phi(\dm^T \xi)=\fa_\vk(\xi)\hat\phi(\xi) \quad \mbox{and}\quad
e^{-\iu \vk\cdot \dm^T\xi} \hat\psi(\dm^T \xi)=\fb_\vk(\xi) \hat\phi(\xi), \quad a.e.\; \xi\in \dR,\; \vk\in\dZ, \label{phi:psi:refinable}\\
&e^{-\iu \vk\cdot \dm^T\xi} \hat{\tilde \phi}(\dm^T \xi)=\tilde \fa_\vk(\xi)\hat{\tilde \phi}(\xi) \quad \mbox{and}\quad
e^{-\iu \vk\cdot \dm^T\xi} \hat{\tilde \psi}(\dm^T\xi)=\tilde \fb_\vk(\xi) \hat{\tilde\phi}(\xi), \quad a.e.\; \xi\in \dR, \; \vk\in \dZ, \label{tphi:tpsi:refinable}
\end{align}
where
\begin{equation}\label{phi:psi}
\phi:=[\phi^1, \ldots, \phi^\mphi]^T, \qquad \psi:=[\psi^1, \ldots, \psi^\mpsi]^T,\qquad
\tilde \phi:=[\tilde \phi^1, \ldots, \tilde \phi^\mphi]^T, \qquad \tilde \psi:=[\tilde \psi^1, \ldots, \tilde \psi^\mpsi]^T.
\end{equation}
If $\dm$ is a $d\times d$ integer invertible matrix with $\ddm:=|\det \dm|$, then $\mpsi=\mphi(\ddm-1)$ and
\begin{equation}\label{fb:bw}
\ol{\PP_{[\tilde\fa_\0, \tilde\fb_\0]}(\xi)}^T \PP_{[\fa_\0, \fb_\0]}(\xi)=I_{\mphi \ddm},\qquad a.e.\, \xi\in \dR,
\end{equation}
where $I_{\mphi\ddm}$ denotes the $(\mphi\ddm)\times(\mphi\ddm)$ identity matrix and
\begin{equation}\label{PP}
\PP_{[\fa_\0, \fb_\0]}(\xi):=
\left[ \begin{matrix} \fa_\0(\xi+2\pi\omega_0) &\fa_\0(\xi+2\pi \omega_1) &\cdots &\fa_\0(\xi+2\pi\omega_{\ddm-1})\\
\fb_\0(\xi+2\pi\omega_0) &\fb_\0(\xi+2\pi \omega_1) &\cdots &\fb_\0(\xi+2\pi\omega_{\ddm-1})\end{matrix}\right]
\end{equation}
and
$\{\omega_0, \ldots, \omega_{\ddm-1}\}:=[(\dm^T)^{-1} \dZ]\cap [0,1)^d$.
If $\dm$ is a $d\times d$ real-valued expansive matrix, then $(\WS(\Psi), \WS(\tilde \Psi))$ is a pair of homogeneous biorthogonal $\dm$-wavelet bases in $\dLp{2}$, that is, each of $\WS(\Psi)$ and $\WS(\tilde \Psi)$ is a homogeneous Riesz $\dm$-wavelet basis in $\dLp{2}$ and \eqref{bio:psi} holds for all $\vk, \vk'\in \dZ$, $j, j'\in \Z$, and $n, n'=1, \ldots, \mpsi$.
\end{theorem}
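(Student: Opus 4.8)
The plan is to lean on the transfer machinery already in place (Proposition~\ref{prop:frame}, Proposition~\ref{prop:dwf}, Theorem~\ref{thm:rw}) and to reduce the genuinely new content to two frequency-domain computations. \emph{Invariance in $J$.} By the remark preceding the theorem, a pair of nonhomogeneous biorthogonal bases is precisely a pair of nonhomogeneous dual $\dm$-wavelet frames for which \eqref{bio:phi}--\eqref{bio:psi} hold. Since $\WS_{J+n}(\Phi;\Psi)=\{f_{\dm^n;\0}:f\in\WS_J(\Phi;\Psi)\}$ and, by \eqref{simple}, the map $f\mapsto f_{\dm^n;\0}$ is a unitary carrying one system onto the other while preserving every inner product, the relations \eqref{bio:phi}--\eqref{bio:psi} hold at level $J+n$ as soon as they hold at level $J$; combined with Theorem~\ref{thm:rw} (which already gives the Riesz-basis property for all $J$) this yields the first assertion.

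\emph{Refinable structure.} Here I fix attention on two consecutive scales $J$ and $J+1$. For each fixed $\vk$ I expand the coarse generator $\phi^\ell_{\dm^J;\vk}$ in the Riesz basis $\WS_{J+1}(\Phi;\Psi)$ using its biorthogonal dual $\WS_{J+1}(\tilde\Phi;\tilde\Psi)$. The biorthogonality relations, now valid at level $J+1$, force every coefficient against $\tilde\psi^n_{\dm^j;\cdot}$ with $j\ge J+1$ to vanish (since $\langle\phi^\ell_{\dm^J;\vk},\tilde\psi^n_{\dm^j;\cdot}\rangle=0$ for $j\ge J$), so $\phi^\ell_{\dm^J;\vk}$ is an $\ell_2$-combination of $\{\phi^m_{\dm^{J+1};\cdot}\}$ alone; the same holds for $\psi^\ell_{\dm^J;\vk}$ because the level-$J$ wavelets are biorthogonal to $\tilde\psi^n_{\dm^j;\cdot}$ only at $j=J$. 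Taking Fourier transforms and substituting $\xi\mapsto(\dm^T)^{-(J+1)}\xi$ throws the factor $e^{-\iu\vk\cdot\dm^T\xi}$ onto the left-hand side and bundles the expansion coefficients into $2\pi\dZ$-periodic Fourier series, giving exactly \eqref{phi:psi:refinable}; membership of these series in $\dTLp{2}$ is the Bessel bound of $\WS_{J+1}(\tilde\Phi;\tilde\Psi)$, which makes the coefficient sequences square-summable. The identities \eqref{tphi:tpsi:refinable} follow by interchanging the roles of the two (biorthogonal, hence symmetric) systems.

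\emph{The integer case} is where the real work lies. By dilation I may take $J=0$. First, Plancherel together with Poisson summation turns the one-scale biorthogonality relations into the bracket identities $[\hat\phi,\hat{\tilde\phi}]=I_\mphi$, $[\hat\psi,\hat{\tilde\psi}]=I_\mpsi$, and $[\hat\psi,\hat{\tilde\phi}]=0=[\hat\phi,\hat{\tilde\psi}]$, where $[\hat f,\hat g](\xi):=\sum_{\vn\in\dZ}\hat f(\xi+2\pi\vn)\,\overline{\hat g(\xi+2\pi\vn)}^T$. Substituting the refinement equations \eqref{phi:psi:refinable}--\eqref{tphi:tpsi:refinable} (with $\vk=\0$) into the brackets evaluated at $\dm^T\xi$, splitting $\dZ$ into the $\ddm$ cosets indexed by $\{\omega_0,\dots,\omega_{\ddm-1}\}=[(\dm^T)^{-1}\dZ]\cap[0,1)^d$, and pulling the $2\pi\dZ$-periodic masks out of the now-unit brackets collapses all four identities into the single relation $\PP_{[\fa_\0,\fb_\0]}(\xi)\,\overline{\PP_{[\tilde\fa_\0,\tilde\fb_\0]}(\xi)}^T=I_{\mphi+\mpsi}$, with $\PP$ as in \eqref{PP}. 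To pin down the dimensions I pass to the $\dZ$-shift-invariant picture: the subsystems $\{\phi^\ell(\cdot-\vk)\}$, $\{\psi^n(\cdot-\vk)\}$ and $\{\phi^m_{\dm;\vk}:\vk\in\dZ/\dm\dZ\}$ are Riesz sequences generating $\dZ$-shift-invariant spaces $V_0$, $W_0$, $V_1$ of a.e.\ fiber dimensions $\mphi$, $\mpsi$ and $\mphi\ddm$ respectively; the refinement equations give $V_0+W_0\subseteq V_1$, while expanding $\phi^m_{\dm;\vk}$ in $\WS_0(\Phi;\Psi)$ and killing the $j\ge1$ wavelet terms by biorthogonality gives the reverse inclusion, so $V_1=V_0\oplus W_0$ and the fiber dimensions add, forcing $\mpsi=\mphi(\ddm-1)$. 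Consequently $\PP_{[\fa_\0,\fb_\0]}$ is square of order $\mphi\ddm$, and the one-sided identity above becomes two-sided, which is exactly \eqref{fb:bw}. I expect the coset bookkeeping in the bracket collapse and the a.e.\ fiber-dimension count to be the two points requiring the most care.

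Finally, for \emph{the homogeneous conclusion}, $\dm$ is expansive and $\Phi$ is finite, so $\sum_{\phi\in\Phi}\|\phi\|^2_{\dLp{2}}<\infty$ and Theorem~\ref{thm:rw} makes each of $\WS(\Psi)$ and $\WS(\tilde\Psi)$ a homogeneous Riesz $\dm$-wavelet basis. The biorthogonality \eqref{bio:psi} for all $j,j'\in\Z$ follows from its validity for all $J$ by choosing any $J\le\min(j,j')$, and together these statements say precisely that $(\WS(\Psi),\WS(\tilde\Psi))$ is a pair of homogeneous biorthogonal $\dm$-wavelet bases.
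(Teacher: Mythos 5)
Your proposal is correct, and on the $J$-invariance, the derivation of the refinable structure, and the homogeneous conclusion it coincides with the paper's own argument (the paper simply fixes the scales $-1$ and $0$ where you work at $J$ and $J+1$; the masks in \eqref{fa:fb} are exactly your expansion coefficients, square-summable by the same Bessel/Riesz reasoning). The genuine divergence is in the integer case. The paper stacks $\phi,\psi$ into one vector $h$ and collects the coset dilates $\phi^\ell_{\dm;\gamma_n}$ into a vector $\eta$, observes that the $\dZ$-shifts of $h$ and of $\eta$ are two Riesz bases of the same subspace, and expands each in terms of the other (\eqref{eta1:eta2} and \eqref{eta2:eta1}); substituting one expansion into the other and using the a.e.\ invertibility of the Gramians yields \emph{both} products $\PP_{[\fa_\0,\fb_\0]}\ol{\PP_{[\tilde\fa_\0,\tilde\fb_\0]}}^T=I_{\mphi+\mpsi}$ and $\ol{\PP_{[\tilde\fa_\0,\tilde\fb_\0]}}^T\PP_{[\fa_\0,\fb_\0]}=I_{\mphi\ddm}$ at once, after which $\mphi+\mpsi=\mphi\ddm$ falls out of elementary linear algebra (trace of the two products), and \eqref{fb:bw} is the second identity. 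You instead obtain only the first, one-sided identity from the bracket/coset computation, get the count $\mpsi=\mphi(\ddm-1)$ separately from fiber-dimension additivity in $V_1=V_0\oplus W_0$, and then upgrade to \eqref{fb:bw} because a square matrix with a right inverse is invertible. Both routes rest on the same geometric fact, namely the mutual inclusions $V_0+W_0\subseteq V_1$ (refinement) and $V_1\subseteq \ol{V_0+W_0}$ (your expansion of $\phi^m_{\dm;\vk}$, which is the paper's \eqref{phi:1:rep}, with the $j\ge 1$ wavelet coefficients killed by biorthogonality). What your route buys is a cleaner matrix identity: the bracket computation needs no polyphase matrix $\fU$ and no inversion of it. What it costs is the appeal to shift-invariant-space range-function theory: that Riesz-sequence generators force a.e.\ fiber dimensions exactly $\mphi$, $\mpsi$, $\mphi\ddm$, and --- the delicate point you rightly flag --- that $V_0\cap W_0=\{0\}$ (which does follow from the $l_2$-linear independence of the Riesz basis $\WS_0(\Phi;\Psi)$) implies the fibers intersect trivially a.e., so that dimensions add. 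That implication is true (it is Helson's characterization of shift-invariant spaces via range functions), but it is precisely the external machinery the paper's two-sided-identity argument avoids, so if you write this up in full you should either cite that theory explicitly or switch to the paper's derivation of the second identity, which makes the dimension count free.
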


\begin{proof}  By Theorem~\ref{thm:rw}, each of $\WS_J(\Phi;\Psi)$ and $\WS_J(\tilde \Phi; \tilde \Psi)$ is a nonhomogeneous Riesz $\dm$-wavelet basis in $\dLp{2}$ for all integers $J$.
By \eqref{simple}, we see that \eqref{bio:phi} and \eqref{bio:psi} must hold for all integers $J$. Thus, $(\WS_J(\Phi;\Psi), \WS_J(\tilde \Phi; \tilde \Psi))$ is a pair of nonhomogeneous biorthogonal $\dm$-wavelet bases in $\dLp{2}$ for all integers $J$.

To prove \eqref{phi:psi:refinable} and \eqref{tphi:tpsi:refinable},
let us consider the representations of $\phi^\ell_{\dm^{-1};\vk}, \psi^n_{\dm^{-1}; \vk}, \vk\in \dZ$, $\ell=1, \ldots, \mphi$ and $n=1, \ldots, \mpsi$ under the Riesz basis $\WS_0(\Phi;\Psi)$. Noting that \eqref{bio:phi} and \eqref{bio:psi} hold for all integers $J$, we deduce that
\begin{equation}\label{phi:psi:ref}
\phi^\ell_{\dm^{-1};\vk}=\sum_{\ell'=1}^\mphi \sum_{\vm\in \dZ} \la \phi^\ell_{\dm^{-1};\vk}, \tilde \phi^{\ell'}_{I_d;\vm}\ra \phi^{\ell'}_{I_d;\vm}
\quad\mbox{and}\quad
\psi^n_{\dm^{-1};\vk}=\sum_{\ell'=1}^\mphi \sum_{\vm\in \dZ} \la \psi^n_{\dm^{-1}; \vk}, \tilde \phi^{\ell'}_{I_d;\vm}\ra \phi^{\ell'}_{I_d;\vm}
\end{equation}
with all the coefficient sequences being square summable and the series in \eqref{phi:psi:ref} converging in $\dLp{2}$.
For $\vk\in \dZ$, define
\begin{equation}\label{fa:fb}
\begin{split}
&[\fa_\vk(\xi)]_{\ell, \ell'}:=|\det \dm|^{-1/2} \sum_{\vm\in \dZ} \la \phi^\ell_{\dm^{-1};\vk}, \tilde \phi^{\ell'}_{I_d; \vm}\ra e^{-\iu \vm \cdot \xi},\\
&[\fb_\vk(\xi)]_{n, \ell'}:=|\det \dm|^{-1/2} \sum_{\vm\in \dZ} \la \psi^n_{\dm^{-1};\vk}, \tilde \phi^{\ell'}_{I_d; \vm}\ra e^{-\iu \vm \cdot \xi},
\end{split}
\end{equation}
where $[\fa_\vk]_{\ell,\ell'}$ denotes the $(\ell,\ell')$-entry of the matrix $\fa_\vk$. It is evident that both
$\fa_\vk$ and $\fb_\vk$ are matrices of $2\pi\dZ$-periodic measurable functions in $\dTLp{2}$.
Taking Fourier transform on both sides of the equations in \eqref{phi:psi:ref} and noting $\wh{\phi^\ell_{\dm^{-1};\vk}}(\xi)= |\det\dm|^{1/2} e^{-i \vk\cdot \dm^T\xi} \hat \phi(\dm^T\xi)$,
we conclude that \eqref{phi:psi:refinable} holds.  Using the same argument by switching the roles of $\phi, \psi$ with $\tilde \phi, \tilde \psi$,
we see that \eqref{tphi:tpsi:refinable} holds with
\begin{equation}\label{tfa:tfb}
\begin{split}
&[\tilde\fa_\vk(\xi)]_{\ell, \ell'}:=|\det \dm|^{-1/2} \sum_{\vm\in \dZ} \la \tilde \phi^\ell_{\dm^{-1};\vk}, \phi^{\ell'}_{I_d; \vm}\ra e^{-\iu \vm \cdot \xi},\\
&[\tilde \fb_\vk(\xi)]_{n, \ell'}:=|\det \dm|^{-1/2} \sum_{\vm\in \dZ} \la \tilde \psi^n_{\dm^{-1};\vk}, \phi^{\ell'}_{I_d; \vm}\ra e^{-\iu \vm \cdot \xi}.
\end{split}
\end{equation}
%

For an integer invertible matrix $\dm$, we denote by
$\{\gamma_0, \ldots, \gamma_{\ddm-1}\}:=\{ \dm x \, : \, x\in (\dm^{-1}\dZ)\cap [0,1)^d\}$. That is, $\{\gamma_0, \ldots, \gamma_{\ddm-1}\}$ is a complete set of representatives of the distinct cosets in the quotient group $\dZ/(\dm\dZ)$.
For $\fa_\0(\xi)=\sum_{\vk\in \dZ} a_\0(k) e^{-\iu \vk\cdot \xi}$, we have $\fa_\0(\xi)=\sum_{n=0}^{\ddm-1} \fa_{\0,\gamma_n}(\dm^T\xi)e^{-\iu \gamma_n \cdot \xi}$ with $\fa_{\0,\gamma_n}(\xi):=\sum_{\vk\in \dZ} a_\0(\dm\vk+\gamma_n) e^{-\iu \vk\cdot \xi}$. Now it is easy to check that
\begin{equation}\label{PP:otherform}
[\fa_\0(\xi+2\pi \omega_0), \ldots, \fa_\0(\xi+2\pi \omega_{\ddm-1})]
=[\fa_{\0,\gamma_0}(\dm^T\xi), \ldots,
\fa_{\0,\gamma_{\ddm-1}}(\dm^T\xi)] \fU(\xi),
\end{equation}
where
\[
\fU(\xi):=(e^{-\iu (\xi+2\pi \omega_m)\cdot \gamma_n} I_\mphi)_{0\le m, n\le \ddm-1} \qquad \mbox{and}\qquad \fU(\xi) \ol{\fU(\xi)}^T=\ddm I_{\mphi \ddm}.
\]
Denote
\[
h:=[\phi^1, \ldots, \phi^\mphi, \psi^1, \ldots, \psi^\mpsi]^T
\quad\mbox{and}\quad
\eta:=[\phi^1_{\dm; \gamma_0}, \ldots, \phi^\mphi_{\dm, \gamma_0},
\ldots, \phi^1_{\dm; \gamma_{\ddm-1}}, \ldots, \phi^\mphi_{\dm, \gamma_{\ddm-1}}]^T.
\]
Noting that $\wh{\phi^\ell_{\dm; \gamma_n}}(\xi)=\ddm^{-1/2} e^{-\iu \gamma_n \cdot \dn\xi} \wh{\phi^\ell}(\dn \xi)$ with $\dn:=(\dm^T)^{-1}$,
we deduce from \eqref{phi:psi:ref} and \eqref{PP:otherform} that
\begin{equation}\label{eta1:eta2}
\hat{h}(\xi)=\ddm^{1/2} \PP_{[\fa_\0, \fb_\0]}(\dn\xi) \fU^{-1}(\dn \xi) \hat{\eta}(\xi).
\end{equation}
Representing the entries in the vector $\eta$ under the Riesz basis $\WS_0(\Phi;\Psi)$, we have
\begin{equation}\label{phi:1:rep}
\phi^\ell_{\dm; \gamma_n}=\sum_{\ell'=1}^\mphi \sum_{\vk\in \dZ}
\la \phi^\ell_{\dm; \gamma_n}, \tilde \phi^{\ell'}_{I_d; -\vk}\ra
\phi^{\ell'}_{I_d; -\vk}+\sum_{n'=1}^\mpsi \sum_{\vk\in \dZ}
\la \phi^\ell_{\dm; \gamma_n}, \tilde \psi^{n'}_{I_d; -\vk}\ra
\psi^{n'}_{I_d; -\vk}.
\end{equation}
Using \eqref{tfa:tfb} and \eqref{phi:1:rep}, by $\la \phi^\ell_{\dm; \gamma_n}, \tilde \phi^{\ell'}_{I_d; -\vk}\ra=\la \phi^\ell_{\dm; \dm\vk+\gamma_n}, \tilde \phi^{\ell'}\ra$, we deduce that
\begin{equation}\label{eta2:eta1}
\hat{\eta}(\xi)=\ddm^{1/2} \ol{\fU^{-1}(\dn \xi)}^T \ol{\PP_{[\tilde \fa_\0, \tilde \fb_\0]}(\dn\xi)}^T \hat{h}(\xi)=\ddm^{-1/2} \fU(\dn \xi) \ol{\PP_{[\tilde \fa_\0, \tilde \fb_\0]}(\dn\xi)}^T \hat{h}(\xi).
\end{equation}
Note that both $h(\cdot-\vk), \vk\in \dZ$
and $\eta(\cdot-\vk), \vk\in \dZ$
are two Riesz bases for the same subspace.
Now it follows from \eqref{eta1:eta2} and \eqref{eta2:eta1} that
\[
\PP_{[\fa_\0, \fb_\0]}(\dn\xi) \ol{\PP_{[\tilde \fa_\0, \tilde \fb_\0]}(\dn\xi)}^T=I_{\mphi+\mpsi} \qquad \mbox{and}\qquad
\ol{\PP_{[\tilde \fa_\0, \tilde \fb_\0]}(\dn\xi)}^T \PP_{[\fa_\0, \fb_\0]}(\dn\xi)=I_{\mphi \ddm}.
\]
The above identities hold if and only if $\mphi+\mpsi=\mphi\ddm$ (that is, $\mpsi=\mphi(\ddm-1))$ and \eqref{fb:bw} holds.

For a real-valued expansive matrix $\dm$, by Theorem~\ref{thm:rw}, each of $\WS(\Psi)$ and  $\WS(\tilde \Psi)$ is a Riesz basis in $\dLp{2}$. Since \eqref{bio:psi} holds for all integers $j$ and $j'$, we conclude that $(\WS(\Psi),\WS(\tilde \Psi))$ is a pair of homogeneous biorthogonal $\dm$-wavelet bases in $\dLp{2}$.
\end{proof}


Using an argument in \cite[Lemma~1]{Han:pams:2006}, we explore in the following result the connections between nonhomogeneous Riesz wavelet bases and refinable structure.

\begin{theorem}\label{thm:rw:refinable} Let $\dm$ be a $d\times d$ integer invertible matrix. Let $\Phi$ and $\Psi$ in \eqref{Phi:Psi} be finite subsets of $\dLp{2}$. Suppose that $\WS_J(\Phi; \Psi)$ is a nonhomogeneous Riesz $\dm$-wavelet basis in $\dLp{2}$ for some integer $J$. Then the following statements are equivalent to each other:
\begin{enumerate}
\item[{\rm(i)}] there exist subsets
$\tilde \Phi$ and $\tilde \Psi$ in \eqref{Phi:Psi} of $\dLp{2}$ such that the pair $(\WS_J(\Phi;\Psi), \WS_J(\tilde \Phi; \tilde \Psi))$ is a pair of nonhomogeneous biorthogonal $\dm$-wavelet bases in $\dLp{2}$;
\item[{\rm(ii)}] there exist an $\mphi\times \mphi$ matrix $\fa$ and an $\mpsi\times \mphi$ matrix $\fb$ of $2\pi\dZ$-periodic measurable functions in $\dTLp{2}$ such that \eqref{refeq:phi:psi} holds with $\phi$ and $\psi$ being defined in \eqref{phi:psi}.
\end{enumerate}
\end{theorem}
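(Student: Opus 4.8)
My plan is to prove the two implications separately; the forward implication is essentially a corollary of Theorem~\ref{thm:bw}, while the reverse implication carries all the content. For (i) $\Rightarrow$ (ii), suppose such a dual pair exists. Theorem~\ref{thm:bw} then yields matrices $\fa_\vk,\fb_\vk$ satisfying \eqref{phi:psi:refinable} for every $\vk\in\dZ$; specializing to $\vk=\0$, where $e^{-\iu\vk\cdot\dm^T\xi}=1$, gives precisely \eqref{refeq:phi:psi} with $\fa=\fa_\0$ and $\fb=\fb_\0$. No additional work is required in this direction.

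The substance is (ii) $\Rightarrow$ (i). By Theorem~\ref{thm:rw} the Riesz basis property holds at every integer scale, and the refinement relations in (ii) are scale-free, so I may assume $J=0$. Let $\{u^\ell_\vk\}\cup\{v^n_{j;\vk}\}$ be the unique dual Riesz basis of $\WS_0(\Phi;\Psi)$, where $u^\ell_\vk$ is biorthogonal to $\phi^\ell_{I_d;\vk}$ and $v^n_{j;\vk}$ to $\psi^n_{\dm^j;\vk}$ ($j\ge0$). The operators $T_{\vk_0}:f\mapsto f(\cdot-\vk_0)$ ($\vk_0\in\dZ$), which permute $\WS_0(\Phi;\Psi)$, and $D:f\mapsto f_{\dm;\0}$, which maps $\WS_0(\Phi;\Psi)$ onto $\WS_1(\Phi;\Psi)$, are unitary (here one uses that $\dm$ is an integer matrix, so $\dm^j\vk_0\in\dZ$ for $j\ge0$), and the canonical dual transforms covariantly: the dual of $Ue$ in $U\WS_0(\Phi;\Psi)$ is $U$ applied to the dual of $e$ in $\WS_0(\Phi;\Psi)$. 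Translation covariance at scale $0$ gives $u^\ell_\vk=\tilde\phi^\ell(\cdot-\vk)$ and $v^n_{0;\vk}=\tilde\psi^n(\cdot-\vk)$ with $\tilde\phi^\ell:=u^\ell_\0$ and $\tilde\psi^n:=v^n_{0;\0}$; it remains to identify $v^n_{j;\vk}$ for $j\ge1$.

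Set $V_j:=\overline{\mathrm{span}}\{\phi^\ell_{\dm^j;\vk}:\vk\in\dZ,\ \ell=1,\dots,\mphi\}$ and $W_j:=\overline{\mathrm{span}}\{\psi^n_{\dm^j;\vk}:\vk\in\dZ,\ n=1,\dots,\mpsi\}$. The two relations in \eqref{refeq:phi:psi} say exactly that each $\phi^\ell$ and each $\psi^n$ lies in $V_1$, so $V_0+W_0\subseteq V_1$. Since both $\WS_0(\Phi;\Psi)$ and $\WS_1(\Phi;\Psi)$ are Riesz bases, they furnish the stable (not necessarily orthogonal) direct-sum decompositions $\dLp{2}=V_0\oplus W_0\oplus(\bigoplus_{j\ge1}W_j)$ and $\dLp{2}=V_1\oplus(\bigoplus_{j\ge1}W_j)$; comparing these with $V_0+W_0\subseteq V_1$ forces $V_1=V_0\oplus W_0$. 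Hence the two bases induce the \emph{same} decomposition $\dLp{2}=V_1\oplus(\bigoplus_{j\ge1}W_j)$ and share every generator at scales $j\ge1$. Because the coefficient functional attached to $\psi^n_{\dm^j;\vk}\in W_j$ ($j\ge1$) depends only on this common decomposition and on the common basis of $W_j$, the two duals agree there: writing $\{{u'}^\ell_\vk\}\cup\{{v'}^n_{j;\vk}\}$ for the dual Riesz basis of $\WS_1(\Phi;\Psi)$, one gets $v^n_{j;\vk}={v'}^n_{j;\vk}$ for all $j\ge1$.

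Finally I would combine this with dilation covariance. As $D$ maps $\WS_0(\Phi;\Psi)$ onto $\WS_1(\Phi;\Psi)$ and $\psi^n_{\dm^j;\vk}=(\psi^n_{\dm^{j-1};\vk})_{\dm;\0}$, the dual of $\psi^n_{\dm^j;\vk}$ in $\WS_1(\Phi;\Psi)$ equals $(v^n_{j-1;\vk})_{\dm;\0}$, i.e. ${v'}^n_{j;\vk}=(v^n_{j-1;\vk})_{\dm;\0}$ for $j\ge1$. Together with $v^n_{j;\vk}={v'}^n_{j;\vk}$ this gives $v^n_{j;\vk}=(v^n_{j-1;\vk})_{\dm;\0}$, and inducting down to scale $0$ yields $v^n_{j;\vk}=(\tilde\psi^n(\cdot-\vk))_{\dm^j;\0}=\tilde\psi^n_{\dm^j;\vk}$. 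Thus the dual Riesz basis of $\WS_0(\Phi;\Psi)$ is exactly $\WS_0(\tilde\Phi;\tilde\Psi)$ with $\tilde\Phi=\{\tilde\phi^1,\dots,\tilde\phi^\mphi\}$ and $\tilde\Psi=\{\tilde\psi^1,\dots,\tilde\psi^\mpsi\}$, which together with biorthogonality and the Riesz basis property of the dual proves (i) for $J=0$, hence by \eqref{simple} for all $J$. I expect the main obstacle to be precisely this reverse direction: establishing $V_1=V_0\oplus W_0$ and the coincidence of the fine-scale wavelet duals across the two scale-related bases requires careful handling of the infinite stable direct sums and the projections they define, which is where the argument of \cite[Lemma~1]{Han:pams:2006} enters.
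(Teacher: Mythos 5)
Your proposal is correct, and both directions are handled soundly; but your treatment of (ii)$\Rightarrow$(i) is organized rather differently from the paper's. The paper also starts from the unique canonical dual Riesz basis and also exploits translation/dilation covariance together with integrality of $\dm$ (via $\dm^{j}\vk_0\in\dZ$ for $j\ge 0$), but it proceeds by \emph{defining} explicit candidates $\tilde\phi^{\ell'}:=\tilde \phi^{\ell';J,\0}_{\dm^{-J};\0}$, $\tilde\psi^{n'}:=\tilde \psi^{n';J,\0}_{\dm^{-J};\0}$ from the level-$J$ duals and then \emph{verifying} all the biorthogonality relations \eqref{bio:phi}--\eqref{bio:psi} by direct inner-product computations: the cross-scale cases are handled, following \cite[Lemma~1]{Han:pams:2006}, by iterating \eqref{refeq:phi:psi} to write each coarse-scale $\phi^\ell_{\dm^J;\vk}$ or $\psi^n_{\dm^j;\vk}$ as an $l_2$-linear combination of the fine-scale $\phi^{L}_{\dm^{j'};\vm}$ and then invoking the orthogonality \eqref{bio:aux}; uniqueness of the dual basis closes the argument. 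You instead \emph{identify} the canonical dual outright: you use \eqref{refeq:phi:psi} only once, to get $V_0+W_0\subseteq V_1$, force $V_1=V_0\oplus W_0$ by comparing the two stable direct-sum decompositions coming from $\WS_0(\Phi;\Psi)$ and $\WS_1(\Phi;\Psi)$, deduce that the two bases induce identical coefficient functionals on the shared fine-scale generators, and then propagate down the scales by the dilation-covariance recursion $v^n_{j;\vk}=(v^n_{j-1;\vk})_{\dm;\0}$. The mathematical ingredients are the same (canonical dual, covariance, refinability, uniqueness), but your subspace/functional argument replaces the paper's term-by-term verification; it avoids iterating the refinement relation across scales (no products of $\dTLp{2}$ symbols or repeated $l_2$-expansion arguments are needed), at the cost of leaning on the topological direct-sum formalism, whose justification (closed spans of index subsets of a Riesz basis, boundedness of the associated projections) you correctly flag as the point needing care and which does hold here since the synthesis operator of a Riesz basis is an isomorphism.
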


\begin{proof} (i)$\Rightarrow$(ii) is a direct consequence of Theorem~\ref{thm:bw}. We now prove (ii)$\Rightarrow$(i).
Since $\WS_J(\Phi; \Psi)$ is a Riesz basis in $\dLp{2}$, it has a unique dual Riesz basis $\{ \phi^{\ell';J,\vk'}\, : \, \vk'\in \dZ, \ell'=1, \ldots, \mphi\} \cup \{\tilde \psi^{n';j',\vk'}\, : \, \vk'\in \dZ, j'\ge J, n'=1, \ldots, \mpsi\}$ such that \eqref{bio:phi} and \eqref{bio:psi} hold with $\tilde \phi^{\ell'}_{\dm^J; \vk'}$ and $\tilde \psi^{n'}_{\dm^{j'}; \vk'}$ being replaced by
$\tilde \phi^{\ell';J,\vk'}$ and $\tilde \psi^{n'; j',\vk'}$, respectively. Define
\begin{equation}\label{drw}
\tilde \phi^{\ell'}:=\tilde \phi^{\ell';J,\0}_{\dm^{-J}; \0}, \qquad \ell'=1, \ldots, \mphi \quad \mbox{and}\quad
\tilde \psi^{n'}:=\tilde \psi^{n';J,\0}_{\dm^{-J}; \0}, \qquad n'=1, \ldots, \mpsi.
\end{equation}
Now we prove that \eqref{bio:phi} and \eqref{bio:psi} must hold.
In fact, by the definition in \eqref{drw}, we have
\[
\la \phi^\ell_{\dm^J; \vk}, \tilde \phi^{\ell'}_{\dm^J; \vk'}\ra=
\la \phi^\ell_{\dm^J; \vk-\vk'}, \tilde \phi^{\ell'}_{\dm^J; \0}\ra
=\la \phi^\ell_{\dm^J; \vk-\vk'}, \tilde \phi^{\ell';J,\0}\ra
=\gd(\ell-\ell')\gd(\vk-\vk')
\]
and for $j\ge J$, noting that $\dm^{j-J}\dZ\subseteq \dZ$, we have
\[
\la \psi^n_{\dm^j; \vk}, \tilde \phi^{\ell'}_{\dm^J; \vk'}\ra
=\la \psi^n_{\dm^j; \vk-\dm^{j-J}\vk'}, \tilde \phi^{\ell'}_{\dm^J; \0}\ra=\la \psi^n_{\dm^j; \vk-\dm^{j-J}\vk'}, \tilde \phi^{\ell';J, \0}\ra=0.
\]
We also observe that
\begin{equation}\label{bio:aux}
\la \phi^\ell_{\dm^J; \vk}, \tilde \psi^{n'}_{\dm^J; \vk'}\ra
=\la \phi^\ell_{\dm^J; \vk-\vk'}, \tilde \psi^{n'}_{\dm^J; \0}\ra
=\la \phi^\ell_{\dm^J; \vk-\vk'}, \tilde \psi^{n'; J,\0}\ra=0.
\end{equation}

We now prove the rest of \eqref{bio:phi} and \eqref{bio:psi} by a similar argument as in \cite[Lemma~1]{Han:pams:2006}.
For $j'>J$, by \eqref{refeq:phi:psi}, we see that $\phi^\ell_{\dm^J;\vk}$ is an $l_2$-linear combination of $\phi^{L}_{\dm^{j'};\vm}$, $\vm\in \dZ$ and $L=1, \ldots, \mphi$.
By \eqref{bio:aux}, we have $\la \phi^{L}_{\dm^{j'};\vm}, \tilde \psi^{n'}_{\dm^{j'};\vk'}\ra=
\la \phi^{L}_{\dm^{J};\vm}, \tilde \psi^{n'}_{\dm^{J};\vk'}\ra=0$.
Therefore, $\la \phi^\ell_{\dm^J; \vk}, \tilde \psi^{n'}_{\dm^{j'};\vk'}\ra=0$.
Hence, \eqref{bio:phi} is verified.

To prove \eqref{bio:psi}, we consider two cases. If $j\ge j'\ge J$, then
\begin{align*}
\la \psi^n_{\dm^j;\vk}, \tilde \psi^{n'}_{\dm^{j'};\vk'}\ra
&=\la \psi^n_{\dm^{j-j'+J};\vk}, \tilde \psi^{n'}_{\dm^J;\vk'}\ra
=\la \psi^n_{\dm^{j-j'+J};\vk-\dm^{j-j'}\vk'}, \tilde \psi^{n'}_{\dm^J;\0}\ra\\
&=\la \psi^n_{\dm^{j-j'+J};\vk-\dm^{j-j'}\vk'}, \tilde \psi^{n';J,\0}\ra=\gd(j-j')\gd(\vk-\vk').
\end{align*}
If $j'>j\ge J$, then by \eqref{refeq:phi:psi}, $\psi^{n}_{\dm^j; \vk}$ is an $l_2$-linear combination of $\phi^L_{\dm^{j'};\vm}$, $\vm\in \dZ$ and $L=1, \ldots, \mphi$.
By \eqref{bio:aux}, we have $\la \phi^L_{\dm^{j'};\vm}, \tilde \psi^{n'}_{\dm^{j'};\vk'}\ra=\la \phi^L_{\dm^{J};\vm}, \tilde \psi^{n'}_{\dm^{J};\vk'}\ra=0$. Therefore, $\la \psi^n_{\dm^j;\vk}, \tilde \psi^{n'}_{\dm^{j'};\vk'}\ra=0$.
Hence, \eqref{bio:psi} is verified.

In conclusion, by the uniqueness of a dual Riesz basis of $\WS_J(\Phi; \Psi)$, we proved that $\tilde \phi^{\ell'}_{\dm^J;\vk'}=
\tilde \phi^{\ell';J,\vk'}$ and $\tilde \psi^{n'}_{\dm^j;\vk'}=\tilde \psi^{n'; j, \vk'}$ for all $j'\ge J$, $\vk'\in \dZ$ and $\ell'=1, \ldots, \mphi, n'=1,\ldots, \mpsi$.
Therefore, the pair $(\WS_J(\Phi;\Psi), \WS_J(\tilde \Phi; \tilde \Psi))$ is a pair of nonhomogeneous biorthogonal $\dm$-wavelet bases in $\dLp{2}$.
This completes the proof of (ii)$\Rightarrow$(i).
\end{proof}

We conclude this section by some remarks. Firstly, by the results in this section, we see that a nonhomogeneous wavelet system $\WS_J(\Phi; \Psi)$ in $\dLp{2}$ for a given integer $J$ (that is, at the coarsest scale level $J$) naturally leads to a sequence of nonhomogeneous wavelet systems $\WS_j(\Phi; \Psi)$ for all integers $j\in \Z$ while preserving almost all the properties of the system at $j=J$.
This is a fundamental property in wavelet analysis. In fact, a one-level fast wavelet transform is just a transform between the two sets of wavelet coefficients of a given function represented under two nonhomogeneous wavelet systems at two consecutive scale levels. Naturally, for a multi-level wavelet transform, there is an underlying sequence of wavelet systems at every scale level, instead of just one single wavelet system. For a given nonhomogeneous (stationary) wavelet system, since it naturally produces a sequence of nonhomogeneous wavelet systems, this allows us to study only one nonhomogeneous wavelet system instead of a sequence of nonhomogeneous wavelet systems. This desirable property of nonhomogeneous wavelet systems is not shared by homogeneous wavelet systems.
Secondly, by the results in this section, for any $d\times d$ real-valued expansive matrix $\dm$, we see that a nonhomogeneous wavelet system $\WS_J(\Phi; \Psi)$ in $\dLp{2}$ for a given integer $J$ naturally leads to a homogeneous wavelet system $\WS(\Psi)$. Therefore, a homogeneous wavelet system $\WS(\Psi)$ can be regarded as the limit of a sequence of nonhomogeneous wavelet systems $\WS_j(\Phi;\Psi)$ as $j\to -\infty$. Due to Theorem~\ref{thm:bw}, we see that a nonhomogeneous orthonormal wavelet basis has natural connections to refinable structures. This motivates us to introduce the notion of a homogeneous wavelet system with the quasi-refinable structure. For a given homogeneous wavelet system $\WS(\Psi)$ being a frame or a Riesz basis in $\dLp{2}$, we say that the homogeneous wavelet system $\WS(\Psi)$ has the quasi-refinable structure, if there exists a subset $\Phi$ of $\dLp{2}$ such that the nonhomogeneous wavelet system $\WS_0(\Phi;\Psi)$ is a frame or a Riesz basis in $\dLp{2}$ so that the homogeneous wavelet system $\WS(\Psi)$ is its limit system. Lastly, we mention that results in this section for nonhomogeneous wavelet systems in $\dLp{2}$ can be generalized to more general function spaces such as Sobolev spaces and Besov spaces. For the case of Sobolev spaces, see \cite[Theorem~7]{Han:wavelet} and \cite{HanShen:ca:2009} for more detail.

\section{Frequency-based Nonstationary Wavelet Systems in the Distribution Space}

To characterize nonhomogeneous dual or tight wavelet frames in $\dLp{2}$, we shall take a frequency-based approach by studying frequency-based nonstationary wavelet systems in the distribution space. More precisely, we shall introduce and characterize a pair of frequency-based nonstationary dual wavelet frames in the distribution space. As pointed out in \cite{Han:wavelet} for dimension one, such a notion allows us to completely separate the perfect reconstruction property of a wavelet system from its stability in various function spaces. Results in this section will serve as our basis to study nonhomogeneous and directional nonstationary tight wavelet frames in $\dLp{2}$ in the next section.

Following the standard notation, we denote by $\dD$ the linear space of all compactly supported $C^\infty$ (test) functions with the usual topology, and $\dDpr$ the linear space of all distributions, that is, $\dDpr$ is the dual space of $\dD$. By duality, it is easy to see that translation, dilation and modulation in \eqref{tdm} can be naturally extended to distributions in $\dDpr$.  For a tempered distribution $f$, by the definition of the notation $f_{U; \vk, \vn}$ in \eqref{tdm}, we have
\begin{equation}\label{ftdm}
\wh{f_{U; \vk, \vn}}=e^{-\iu \vk \cdot \vn} \hat f_{(U^T)^{-1}; -\vn, \vk} \quad \mbox{and}\quad \wh{f_{U; \vk}}=\hat f_{(U^T)^{-1}; \0, \vk}.
\end{equation}
In this paper, we shall use boldface letters to denote functions/distributions or sets of functions/distributions in the frequency domain.

By $\dlLp{p}$ we denote the linear space of all measurable functions $f$ such that $\int_K |f(x)|^p dx<\infty$ for every compact subset $K$ of $\dR$ with the usual modification for $p=\infty$. Note that $\dlLp{1}$ is just the set of all locally integrable functions that can be globally identified as distributions, that is, $\dlLp{1}\subseteq \dDpr$.
For $\ff\in \dD$ and $\fpsi\in \dlLp{1}$, we shall use the following paring
\begin{equation}\label{pair}
\la \ff, \fpsi\ra:=\int_{\dR} \ff(\xi) \overline{\fpsi(\xi)} d\xi \quad \mbox{and}\quad
\la \fpsi, \ff\ra:=\ol{\la \ff, \fpsi\ra}=
\int_{\dR} \fpsi(\xi) \overline{\ff(\xi)} d\xi.
\end{equation}
When $\ff\in \dD$ and $\fpsi\in \mathscr{D}'(\dR)$, the duality pairings $\la \ff, \fpsi\ra$ and $\la \fpsi, \ff\ra$ are understood similarly as $\la \ff, \fpsi\ra:=\overline{\la \fpsi, \ff\ra}:=\overline{\fpsi(\overline{\ff})}$.

Let $J$ be an integer and $\dn_j, j\ge J$ be $d\times d$ real-valued invertible matrices.
Let $\fPhi$ and $\fPsi_j$, $j\ge J$ be subsets of distributions. {\it A frequency-based nonstationary wavelet system} is defined to be
\begin{equation}\label{fnws}
\FWS_J(\fPhi; \{\fPsi_j\}_{j=J}^\infty)=
\{ \fphi_{\dn_J; \0, \vk} \; : \; \vk\in \dZ, \fphi\in \fPhi\} \cup \bigcup_{j=J}^\infty \{ \fpsi_{\dn_j; \0, \vk} \; : \; \vk\in \dZ, \fpsi\in \fPsi_j\}.
\end{equation}
For the particular case $\dn_j=\dn^j$
and $\fPsi_j=\fPsi$ for all $j\ge J$, a frequency-based nonstationary wavelet system in \eqref{fnws} becomes {\it a frequency-based nonhomogeneous (stationary) $\dn$-wavelet system}:
\begin{equation}\label{fws}
\FWS_J(\fPhi; \fPsi)=
\{ \fphi_{\dn^J; \0, \vk} \; : \; \vk\in \dZ, \fphi\in \fPhi\}
\cup \{ \fpsi_{\dn^j; \0, \vk} \; : \; j\ge J, \vk\in \dZ, \fpsi\in \fPsi\}.
\end{equation}
For a nonhomogeneous $\dm$-wavelet system $\WS_J(\Phi; \Psi)$ such that all the generators in $\Phi$ and $\Psi$ are tempered distributions, by \eqref{ftdm}, the image of the nonhomogeneous $\dm$-wavelet system $\WS_J(\Phi; \Psi)$ under the Fourier transform simply becomes
the frequency-based nonhomogeneous $(\dm^T)^{-1}$-wavelet system
$\FWS_J(\fPhi; \fPsi)$, where
\begin{equation}\label{F:Phi:Psi}
\fPhi=\{ \hat{\phi}\; : \; \phi\in \Phi\},\qquad
\fPsi=\{ \hat{\psi}\; : \; \psi\in \Psi\}.
\end{equation}
For analysis of wavelets and framelets, as argued in \cite{Han:wavelet} for dimension one, it is often easier to work with frequency-based wavelet systems $\FWS_J
(\fPhi; \fPsi)$ instead of space/time-based wavelet systems
$\WS_J(\Phi; \Psi)$, though both are equivalent to each other under the framework of tempered distributions. Since we consider frequency-based wavelets and framelets in the distribution space $\dDpr$, it is natural for us to consider $\FWS_J(\fPhi; \fPsi)\subseteq \dDpr$.

Let $\dn_j, j\ge J$ be $d\times d$ real-valued invertible matrices. Let
\begin{equation}\label{fPhi}
\fPhi=\{\fphi^1, \ldots, \fphi^\mphi\} \quad \mbox{and}\quad
\tilde \fPhi=\{\tilde \fphi^1, \ldots, \tilde \fphi^\mphi\}
\end{equation}
and
\begin{equation}\label{fPsij}
\fPsi_j=\{\fpsi^{j,1}, \ldots, \fpsi^{j,\mpsi_j}\} \quad \mbox{and}\quad
\tilde \fPsi_j=\{\tilde \fpsi^{j,1}, \ldots, \tilde \fpsi^{j,\mpsi_j}\}
\end{equation}
be subsets of $\dDpr$ for all integers $j\ge J$, where $\mphi, \mpsi\in \N \cup \{0,+\infty\}$. Let $\FWS_J(\fPhi; \{\fPsi_j\}_{j=J}^\infty)$ be defined in \eqref{fws} and $\FWS_J(\tilde \fPhi; \{\tilde \fPsi_j\}_{j=J}^\infty)$ be defined similarly. Generalizing the notion in \cite{Han:wavelet} from dimension one to high dimensions, we say that the pair
\begin{equation}\label{fnws:pair}
(\FWS_J (\fPhi; \{\fPsi_j \}_{j=J}^\infty),
\FWS_J (\tilde\fPhi; \{\tilde \fPsi_j\}_{j=J}^\infty))
\end{equation}
is {\it a pair of frequency-based nonstationary dual wavelet frames in the distribution space $\dDpr$} if
the following identity holds
\begin{equation}\label{fnws:dwf}
\sum_{\ell=1}^{\mphi} \sum_{\vk\in \dZ} \la \ff, \fphi^{\ell}_{\dn_J; \0, \vk}\ra
\la \tilde \fphi^{\ell}_{\dn_J; \0, \vk}, \fg\ra
+
\sum_{j=J}^{\infty} \sum_{\ell=1}^{\mpsi_j} \sum_{\vk\in \dZ}
\la \ff, \fpsi^{j,\ell}_{\dn_j; \0, \vk}\ra
\la \tilde \fpsi^{j,\ell}_{\dn_j; \0, \vk}, \fg\ra
=(2\pi)^d \la \ff, \fg\ra
\end{equation}
for all $\ff, \fg\in \dD$, where the infinite series in \eqref{fnws:dwf} converge in the following sense
\begin{enumerate}
\item[{\rm(i)}] for every $\ff, \fg\in \dD$, all the series
\begin{equation}\label{series}
\sum_{\ell=1}^\mphi \sum_{\vk\in \dZ} \la \ff, \fphi^{\ell}_{\dn_J; \0, \vk}\ra
\la \tilde \fphi^{\ell}_{\dn_J; \0, \vk}, \fg\ra
\quad \mbox{and}\quad \sum_{\ell=1}^{\mpsi_j} \sum_{\vk\in \dZ}
\la \ff, \fpsi^{j,\ell}_{\dn_j; \0, \vk}\ra
\la \tilde \fpsi^{j,\ell}_{\dn_j; \0, \vk}, \fg\ra
\end{equation}
converge absolutely for all integers $j\ge J$;

\item[{\rm(ii)}] for every $\ff, \fg\in \dD$, the following limit exists and
\begin{equation}\label{dwf:lim}
\lim_{J'\to +\infty} \Big(\sum_{\ell=1}^{\mphi} \sum_{\vk\in \dZ} \la \ff, \fphi^{\ell}_{\dn_J; \0, \vk}\ra
\la \tilde \fphi^{\ell}_{\dn_J; \0, \vk}, \fg\ra
+
\sum_{j=J}^{J'-1} \sum_{\ell=1}^{\mpsi_j} \sum_{\vk\in \dZ}
\la \ff, \fpsi^{j,\ell}_{\dn_j; \0, \vk}\ra
\la \tilde \fpsi^{j,\ell}_{\dn_j; \0, \vk}, \fg\ra\Big)
=(2\pi)^d \la \ff, \fg\ra.
\end{equation}
\end{enumerate}

We say that the pair in \eqref{fnws:pair} is {\it a pair of frequency-based nonstationary dual wavelet frames in $\dLp{2}$} if (i) all elements in the two systems of the pair belong to $\dLp{2}$, (ii) each system in the pair is a frame in $\dLp{2}$, and (iii) \eqref{fnws:dwf} holds for all $\ff, \fg\in \dLp{2}$
with the series converging absolutely. It is straightforward to see that a space/time-based pair $(\WS_J(\Phi; \Psi), \WS_J(\tilde \Phi;\tilde \Psi))$ is a pair of nonhomogeneous dual $\dm$-wavelet frames in $\dLp{2}$ if and only if the frequency-based pair
$(\FWS_J(\fPhi; \fPsi), \FWS_J(\tilde \fPhi; \tilde \fPsi))$
is a pair of frequency-based nonhomogeneous dual $(\dm^T)^{-1}$-wavelet frames in $\dLp{2}$, where $\fPhi, \fPsi$ are defined in \eqref{F:Phi:Psi} and
\begin{equation}\label{F:tilde:Phi:Psi}
\tilde \fPhi=\{ \hat{\tilde\phi}\; : \; \tilde \phi\in \tilde \Phi\},\qquad
\tilde \fPsi=\{ \hat{\tilde \psi}\; : \; \tilde \psi\in \tilde \Psi\}.
\end{equation}

Before we study and characterize a pair of frequency-based nonstationary dual wavelet frames in the distribution space $\dDpr$, by the following result, we see that the above notion allows us to completely separate the perfect reconstruction property of a wavelet system from its stability in the function space $\dLp{2}$.

\begin{theorem}\label{thm:fndwf}
Let $\dn_j, j\ge J$ be $d\times d$ real-valued invertible matrices. Let $\fPhi, \tilde \fPhi$ and $\fPsi_j, \tilde \fPsi_j$ be at most countable subsets of distributions on $\dR$ for all integers $j\ge J$. Then the pair in \eqref{fnws:pair} is a pair of frequency-based nonstationary dual wavelet frames in $\dLp{2}$ if and only if
\begin{enumerate}
\item[{\rm(i)}] there exists a positive constant $C$ such that
\begin{align}
&\sum_{\fphi\in \fPhi} \sum_{\vk\in \dZ} |\la \ff, \fphi_{\dn_J; \0, \vk}\ra|^2+\sum_{j=J}^\infty \sum_{\fpsi\in \fPsi_j} \sum_{\vk\in \dZ} |\la \ff, \fpsi_{\dn_j; \0, \vk}\ra|^2\le C\|\ff\|^2_{\dLp{2}}, \qquad \forall\; \ff\in \dD, \label{primal:bound}\\
&\sum_{\tilde \fphi\in \tilde \fPhi} \sum_{\vk\in \dZ} |\la \fg, \tilde\fphi_{\dn_J; \0, \vk}\ra|^2+\sum_{j=J}^\infty \sum_{\tilde \fpsi\in \tilde \fPsi_j} \sum_{\vk\in \dZ} |\la \fg, \tilde\fpsi_{\dn_j; \0, \vk}\ra|^2\le C\|\fg\|^2_{\dLp{2}}, \qquad \forall\; \fg\in \dD; \label{dual:bound}
\end{align}
\item[{\rm(ii)}] the pair in \eqref{fnws:pair} is a pair of frequency-based nonstationary dual wavelet frames in the distribution space $\dDpr$.
\end{enumerate}
\end{theorem}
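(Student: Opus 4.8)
The plan is to prove the stated equivalence by treating the two implications separately. The necessity direction is a routine restriction-to-a-dense-subspace argument, whereas the sufficiency direction, in which full stability in $\dLp{2}$ must be recovered from pointwise Bessel control on $\dD$ together with the distributional reconstruction identity, carries the real content. The guiding idea is that (i) supplies \emph{stability} (membership in $\dLp{2}$, upper frame bounds, absolute convergence) while (ii) supplies the \emph{perfect reconstruction} identity, and the two interact through a single Cauchy--Schwarz estimate to yield the lower frame bounds.

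For necessity ($\Rightarrow$), suppose the pair in \eqref{fnws:pair} is a pair of frequency-based nonstationary dual wavelet frames in $\dLp{2}$. Since each system is a frame in $\dLp{2}$, a common constant $C$ dominating both upper frame bounds makes \eqref{primal:bound} and \eqref{dual:bound} hold for all $\ff,\fg\in\dLp{2}$, and in particular for all $\ff,\fg\in\dD\subseteq\dLp{2}$; this gives (i). For (ii), the identity \eqref{fnws:dwf} holds for all $\ff,\fg\in\dLp{2}$ with the full series converging absolutely, hence it holds for $\ff,\fg\in\dD$; absolute convergence of the whole series immediately entails both the level-wise absolute convergence \eqref{series} and the existence of the scale limit \eqref{dwf:lim}, so the pair is a pair of dual wavelet frames in $\dDpr$.

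For sufficiency ($\Leftarrow$), assume (i) and (ii). First I would deduce that every generator lies in $\dLp{2}$: retaining in \eqref{primal:bound} only the single term indexed by a fixed $\fphi$ and $\vk$ gives $|\la \ff,\fphi_{\dn_J;\0,\vk}\ra|\le \sqrt{C}\,\|\ff\|_{\dLp{2}}$ for all $\ff\in\dD$, so the functional $\ff\mapsto\la\ff,\fphi_{\dn_J;\0,\vk}\ra$ extends by density to a bounded linear functional on $\dLp{2}$ and, by Riesz representation, is given by an $\dLp{2}$ function agreeing with the distribution $\fphi_{\dn_J;\0,\vk}$; taking $\vk=\0$ and using that $f\mapsto f_{\dn_J;\0,\0}$ is an invertible isometry of $\dLp{2}$ shows $\fphi\in\dLp{2}$, and the same applies to $\tilde\fPhi,\fPsi_j,\tilde\fPsi_j$. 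Collecting all coefficients into $\ell_2$-sequences (countability is used here) defines analysis operators $A\ff$ and $\tilde A\fg$ with $\|A\ff\|_{\ell_2}^2\le C\|\ff\|_{\dLp{2}}^2$ and $\|\tilde A\fg\|_{\ell_2}^2\le C\|\fg\|_{\dLp{2}}^2$, and these bounds extend from $\dD$ to all of $\dLp{2}$ by density, furnishing the upper frame bounds. For the lower bounds I would rewrite \eqref{fnws:dwf} as $(2\pi)^d\la\ff,\fg\ra=\la A\ff,\tilde A\fg\ra_{\ell_2}$ (legitimate on $\dD$ because Cauchy--Schwarz plus the Bessel bounds make the sum absolutely convergent, $\sum|\la\ff,\fphi\ra\,\la\tilde\fphi,\fg\ra|\le\|A\ff\|_{\ell_2}\|\tilde A\fg\|_{\ell_2}\le C\|\ff\|_{\dLp{2}}\|\fg\|_{\dLp{2}}$), and then extend the identity to all $\ff,\fg\in\dLp{2}$ by continuity of $A,\tilde A$ and density. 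Setting $\fg=\ff$ gives $(2\pi)^d\|\ff\|_{\dLp{2}}^2=\la A\ff,\tilde A\ff\ra_{\ell_2}\le\|A\ff\|_{\ell_2}\|\tilde A\ff\|_{\ell_2}\le\sqrt{C}\,\|\ff\|_{\dLp{2}}\|A\ff\|_{\ell_2}$, whence $\|A\ff\|_{\ell_2}^2\ge(2\pi)^{2d}C^{-1}\|\ff\|_{\dLp{2}}^2$, and symmetrically for $\tilde A$. Thus both systems are frames in $\dLp{2}$, the generators all lie in $\dLp{2}$, and \eqref{fnws:dwf} holds on $\dLp{2}$ with absolute convergence, which is exactly the defining list for a pair of dual wavelet frames in $\dLp{2}$.

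The main obstacle is the lower frame bound in the sufficiency direction: the Bessel bounds alone control only the upper bound and membership in $\dLp{2}$, and it is precisely the distributional reconstruction identity, recast as the $\ell_2$-pairing $(2\pi)^d\la\ff,\fg\ra=\la A\ff,\tilde A\fg\ra_{\ell_2}$ and combined with Cauchy--Schwarz, that forces the lower bound. The care required is entirely in the density and continuity extensions: one must verify that the extended analysis operators still return the genuine frame coefficients $\la\ff,\fphi_{\dn_J;\0,\vk}\ra$ (guaranteed once each generator is known to be an $\dLp{2}$ function) and that the level-wise, scale-limit convergence prescribed by (ii) is upgraded to global absolute convergence by the Bessel bounds, so that all the rearrangements underlying the $\ell_2$-pairing representation are justified.
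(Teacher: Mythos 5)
Your proposal is correct and follows essentially the same route as the paper: necessity by restriction to $\dD\subseteq\dLp{2}$, and sufficiency by (a) using a single coefficient bound plus Riesz representation to place each generator in $\dLp{2}$, (b) extending the Bessel bounds from $\dD$ to $\dLp{2}$ by density, (c) extending \eqref{fnws:dwf} to $\dLp{2}$ with absolute convergence via Cauchy--Schwarz, and (d) setting $\fg=\ff$ and applying Cauchy--Schwarz once more to extract the lower frame bounds. Your packaging via analysis operators $A,\tilde A$ is only cosmetic, and your lower-bound constant $(2\pi)^{2d}C^{-1}$ is in fact slightly sharper than the $(2\pi)^d C^{-1}$ stated in the paper, both being sufficient.
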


\begin{proof} The necessity part is trivial, since $\dD\subseteq \dLp{2}$. It suffices to prove the sufficiency part. By \eqref{primal:bound}, we have $|\la \ff, \fphi_{\dn_J;\0,\0}\ra|^2\le C \|\ff\|_{\dLp{2}}^2$ for all $\ff\in \dD$. Therefore, we see that $\la \cdot, \fphi_{\dn_J;\0,\0}\ra$ can be extended into a bounded linear functional on $\dLp{2}$, from which we conclude that $\fphi_{\dn_J;\0,\0}$ can be identified with a function in $\dLp{2}$. Since $\dn_J$ is invertible, we have $\fphi\in \dLp{2}$ for all $\fphi\in \fPhi$. By the same argument, we see that both systems in \eqref{fnws:pair} have all their elements in $\dLp{2}$.
Consequently, since $\dD$ is dense in $\dLp{2}$ and both systems are countable sets, by a standard argument using density, we see that \eqref{primal:bound} and \eqref{dual:bound} hold for all $\ff, \fg\in \dLp{2}$.
By item (ii), \eqref{fnws:dwf} holds for $\ff, \fg\in \dD$.
Since \eqref{primal:bound} and \eqref{dual:bound} hold for all $\ff, \fg\in \dLp{2}$, using Cauchy-Schwarz inequality,
we see that \eqref{fnws:dwf} holds for all $\ff, \fg\in \dLp{2}$ with the series converging absolutely.
Now by \eqref{fnws:dwf} and using Cauchy-Schwarz inequality, it follows from \eqref{primal:bound} and \eqref{dual:bound} that the left-hand sides of the inequalities in \eqref{primal:bound} and \eqref{dual:bound} are no less than $(2\pi)^d C^{-1}\|\ff\|_{\dLp{2}}^2$ and $(2\pi)^d C^{-1}\|\fg\|_{\dLp{2}}^2$, respectively. This completes the proof of the sufficiency part.
\end{proof}

A pair of frequency-based (or space/time-based) nonstationary dual wavelet frames can be generalized from the function space $\dLp{2}$ to a pair of dual function spaces $(\mathscr{B}, \mathscr{B}')$ where $\mathscr{B}$ is a Banach function space contained in $\dDpr$ and $\mathscr{B}'$ is its dual space. For example, $\mathscr{B}$ can be a Sobolev or Besov space. The result in Theorem~\ref{thm:fndwf} holds for such more general function spaces by replacing item (i) with suitable stability condition, more precisely, the boundedness of the primal system in the function space $\mathscr{B}$ and the boundedness of the dual system in the function space $\mathscr{B}'$. That many classical function spaces can often be characterized by the wavelet coefficient sequences largely lies in the fact that we have the perfect reconstruction property in item (ii) while the two wavelet systems are often bounded in many function spaces. For the case of Sobolev spaces, see \cite[Theorem~7]{Han:wavelet} and \cite{HanShen:ca:2009} for more details on pairs of dual wavelet frames in function spaces other than $\dLp{2}$.

To characterize a pair of frequency-based nonstationary dual wavelet frames in the distribution space,
let us first look at the absolute convergence of the series in \eqref{series}. By the following result which generalizes \cite[Lemma~3]{Han:wavelet},
we see that the absolute convergence of the series in \eqref{series} holds under a very mild condition.

\begin{lemma}\label{lem:converg} Let $U$ be a $d\times d$ real-valued invertible matrix and $\fpsi, \tilde \fpsi\in \dlLp{2}$. Then for all $\ff,\fg\in \dD$,
\begin{equation}\label{parseval}
\sum_{\vk\in \dZ} \la \ff, \fpsi_{U; \0, \vk}\ra \la \tilde \fpsi_{U;\0,\vk}, \fg\ra=(2\pi)^d \int_{\dR} \sum_{\vk\in \dZ} \ff(\xi) \ol{\fg(\xi+2\pi U^{-1}\vk)}\, \ol{\fpsi(U\xi)} \tilde \fpsi(U\xi+2\pi \vk)\, d\xi
\end{equation}
with the series on the left-hand side converging absolutely. Note that the infinite sum on the right-hand side of \eqref{parseval} is in fact finite.
\end{lemma}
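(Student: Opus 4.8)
The plan is to reduce everything to the Parseval identity for Fourier series on the torus $\dT$, after a single linear change of variables that absorbs the matrix $U$. First I would substitute $\eta=U\xi$ and introduce the two auxiliary functions
\begin{equation*}
A(\eta):=\ff(U^{-1}\eta)\,\ol{\fpsi(\eta)},\qquad B(\mu):=\ol{\fg(U^{-1}\mu)}\,\tilde\fpsi(\mu).
\end{equation*}
Since $\ff,\fg\in\dD$ are compactly supported and $\fpsi,\tilde\fpsi\in\dlLp{2}$, both $A$ and $B$ are \emph{compactly supported} functions in $\dLp{2}$, hence in $\dLp{1}$. A direct computation from the definition \eqref{tdm} and the pairing \eqref{pair} gives
\begin{equation*}
\la\ff,\fpsi_{U;\0,\vk}\ra=|\det U|^{-1/2}\int_{\dR}A(\eta)\,e^{\iu\vk\cdot\eta}\,d\eta,\qquad
\la\tilde\fpsi_{U;\0,\vk},\fg\ra=|\det U|^{-1/2}\int_{\dR}B(\mu)\,e^{-\iu\vk\cdot\mu}\,d\mu,
\end{equation*}
so that, writing $A^{per}:=\sum_{\vm\in\dZ}A(\cdot+2\pi\vm)$ and $B^{per}$ for the $2\pi\dZ$-periodizations, these pairings are (up to the common factor $|\det U|^{-1/2}(2\pi)^d$) precisely the Fourier coefficients $\wh{A^{per}}(-\vk)$ and $\wh{B^{per}}(\vk)$.

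Second, I would establish the absolute convergence of the left-hand side. Because $A$ is compactly supported and lies in $\dLp{2}$, only finitely many of the translates $A(\cdot+2\pi\vm)$ are nonzero on any bounded set, so $A^{per}\in\dTLp{2}$, and likewise $B^{per}\in\dTLp{2}$. Parseval's identity on $\dT$ then yields $\sum_{\vk\in\dZ}|\wh{A^{per}}(\vk)|^2<\infty$ and $\sum_{\vk\in\dZ}|\wh{B^{per}}(\vk)|^2<\infty$, and the Cauchy--Schwarz inequality gives the absolute convergence of $\sum_{\vk\in\dZ}\la\ff,\fpsi_{U;\0,\vk}\ra\la\tilde\fpsi_{U;\0,\vk},\fg\ra$ asserted in the lemma.

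Third, I would evaluate the sum. Summing over $\vk$ and applying the Plancherel identity $\sum_{\vk}\wh{A^{per}}(\vk)\wh{B^{per}}(-\vk)=(2\pi)^{-d}\int_{\dT}A^{per}B^{per}$, together with the folding identity $\int_{\dT}A^{per}(\eta)B^{per}(\eta)\,d\eta=\int_{\dR}A(\eta)B^{per}(\eta)\,d\eta$ (which uses only the periodicity of $B^{per}$), gives
\begin{equation*}
\sum_{\vk\in\dZ}\la\ff,\fpsi_{U;\0,\vk}\ra\la\tilde\fpsi_{U;\0,\vk},\fg\ra
=(2\pi)^d|\det U|^{-1}\int_{\dR}A(\eta)\,B^{per}(\eta)\,d\eta.
\end{equation*}
Reversing the substitution $\eta=U\xi$ turns the right-hand side into exactly \eqref{parseval}. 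Finally, the finiteness of the inner sum on the right of \eqref{parseval} follows because $\ff(\xi)\ol{\fg(\xi+2\pi U^{-1}\vk)}$ forces $\xi\in\operatorname{supp}\ff$ and $\xi+2\pi U^{-1}\vk\in\operatorname{supp}\fg$, which confines $\vk$ to the bounded set $\tfrac1{2\pi}U(\operatorname{supp}\fg-\operatorname{supp}\ff)$.

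The only genuinely delicate point is that $\fpsi,\tilde\fpsi$ are merely \emph{locally} square integrable, so every instance of global integrability and of the passage $A^{per},B^{per}\in\dTLp{2}$ must be extracted solely from the compact support of the test functions $\ff,\fg$. Once $A$ and $B$ are recognized as compactly supported $\dLp{2}$ functions, the remaining steps are routine Fourier-series manipulations; I expect the careful bookkeeping of this support-driven reduction, rather than any analytic subtlety, to be the main thing to get right.
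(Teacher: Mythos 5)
Your proposal is correct and follows essentially the same route as the paper: your $A^{per}$ and $B^{per}$ are exactly the paper's periodized functions $\fh$ and $\ol{\tilde \fh}$ in \eqref{h}, the identification of the pairings $\la \ff, \fpsi_{U;\0,\vk}\ra$, $\la \tilde\fpsi_{U;\0,\vk},\fg\ra$ with Fourier coefficients is the same computation, and the evaluation via Parseval on $\dTLp{2}$ followed by unfolding and the support argument for the finiteness of the inner sum matches the paper's proof step for step. The only differences are cosmetic (you change variables $\eta=U\xi$ before periodizing and place the complex conjugations slightly differently), so no further commentary is needed.
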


\begin{proof} Define two $2\pi\dZ$-periodic functions $\fh$ and $\tilde \fh$ as follows:
\begin{equation}\label{h}
\fh(\xi):=\sum_{\vn\in \dZ} \ff(U^{-1}(\xi+2\pi \vn))\ol{\fpsi(\xi+2\pi \vn)} \quad \mbox{and}\quad
\tilde \fh(\xi):=\sum_{\vn\in \dZ} \fg(U^{-1}(\xi+2\pi \vn))\ol{\tilde \fpsi(\xi+2\pi \vn)}.
\end{equation}
Since $\ff, \fg\in \dD$, the infinite sums in \eqref{h} are in fact finite for $\xi\in [-\pi, \pi]^d$. Moreover, by $\fpsi, \tilde \fpsi \in \dlLp{2}$, we deduce that $\fh, \tilde \fh\in \dTLp{2}$. By calculation, we have
\[
\int_{[-\pi,\pi)^d} \fh(\xi) e^{\iu \vk\cdot \xi} d\xi=|\det U|^{1/2} \la \ff, \fpsi_{U; \0, \vk}\ra \quad \mbox{and}\quad
\int_{[-\pi,\pi)^d} \tilde \fh(\xi) e^{\iu \vk\cdot \xi} d\xi=|\det U|^{1/2} \la \fg, \tilde \fpsi_{U; \0, \vk}\ra.
\]
Now by Parsevel identity for periodic functions in $\dTLp{2}$, one can easily deduce that the left-hand side of \eqref{parseval} is equal to
\begin{align*}
(2\pi)^d |\det U|^{-1} \int_{[-\pi, \pi)^d} \fh(\xi) \ol{\tilde \fh(\xi)}d\xi&=(2\pi)^d |\det U|^{-1}\int_{\dR} \ff(U^{-1}\xi)\ol{\fpsi(\xi)} \ol{\tilde \fh(\xi)}d\xi\\
&= (2\pi)^d \int_{\dR} \ff(\xi)\ol{\fpsi(U\xi)} \ol{\tilde \fh(U\xi)}d\xi,
\end{align*}
from which we see that \eqref{parseval} holds. Since $\ff, \fg$ are compactly supported and $U$ is invertible, $\ff(\xi)\ol{\fg(\xi+2\pi U^{-1}\vk)}=0$ for all $\xi\in \dR$ provided that $\|\vk\|$ is large enough. Hence, the infinite sum on the right-hand side of \eqref{parseval} is in fact finite.
\end{proof}

The assumption of membership in $\dlLp{2}$ is only used in this paper to guarantee \eqref{parseval} with the series on the left-hand side of \eqref{parseval} converging absolutely.

The following result characterizes a pair of frequency-based nonstationary dual wavelet frames in the distribution space.

\begin{theorem}\label{thm:fndwf:criterion}
Let $J$ be an integer. Let $\dn_j, j\ge J$ be $d\times d$ real-valued invertible matrices such that
\begin{equation}\label{cond:dn}
\gL:=\cup_{j=J}^\infty [\dn_j^{-1}\dZ] \; \mbox{has no accumulation point}\quad\mbox{and}\quad
\lim_{j\to +\infty} \|\dn_j \vk\|=0\qquad \forall\; \vk\in \gL.
\end{equation}
Let $\fPhi, \tilde \fPhi$ in \eqref{fPhi} and $\fPsi_j, \tilde \fPsi_j$ in \eqref{fPsij} be finite subsets of $\lLp{2}$ for all integers $j\ge J$. Then the pair in \eqref{fnws:pair} is a pair of frequency-based nonstationary dual wavelet frames in the distribution space $\dDpr$ if and only if
\begin{equation}\label{basic:cond:1}
\lim_{J'\to +\infty} \Big \la \I_{\fPhi}^{\0}(\dn_J\cdot)+\sum_{j=J}^{J'-1} \I_{\fPsi_j}^{\0}(\dn_j\cdot), \fh\Big\ra=\la 1, \fh\ra \qquad \forall \; \fh\in \dD
\end{equation}
and
\begin{equation}\label{basic:cond:2}
\I_{\fPhi}^{\dn_J \vk}(\dn_J\xi)+\sum_{j=J}^\infty \I_{\fPsi_j}^{\dn_j \vk} (\dn_j\xi)=0, \qquad a.e.\, \xi\in \dR, \vk\in \gL\bs \{\0\},
\end{equation}
(the infinite sums in \eqref{basic:cond:2} are in fact finite.) where
\begin{align}
&\I_{\fPhi}^{\vk}(\xi):=\sum_{\ell=1}^{\mphi} \ol{\fphi^{\ell}(\xi)}\tilde \fphi^{\ell}(\xi+2\pi \vk), \quad \vk\in \dZ \quad \hbox{and}\quad
\I_{\fPhi}^{\vk}:= 0 \qquad \vk \in \dR \bs \dZ,\label{I:Phi}\\
&\I_{\fPsi_j}^{\vk}(\xi):=\sum_{\ell=1}^{\mpsi_j} \ol{\fpsi^{j,\ell}(\xi)}\tilde \fpsi^{j,\ell}(\xi+2\pi \vk), \quad \vk\in \dZ \quad \hbox{and}\quad
\I_{\fPsi_j}^{\vk}:= 0 \qquad \vk \in \dR \bs \dZ.\label{I:Psij}
\end{align}
\end{theorem}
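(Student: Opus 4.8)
The plan is to reduce the dual-frame identity \eqref{fnws:dwf} to a pointwise lattice identity by applying Lemma~\ref{lem:converg} scale by scale and then rearranging the resulting integrals according to the common index set $\gL$. Since each of $\fPhi,\tilde\fPhi,\fPsi_j,\tilde\fPsi_j$ is a finite subset of $\lLp{2}$, Lemma~\ref{lem:converg} applies to each inner $\vk$-sum (with $U=\dn_J$ for the $\fPhi$-block and $U=\dn_j$ for the $\fPsi_j$-block); this already gives the absolute convergence of the series in \eqref{series}, i.e. item (i) of the definition. Summing over the finitely many generators collapses the kernel $\ol{\fpsi(U\xi)}\tilde\fpsi(U\xi+2\pi\vk)$ to exactly $\I_{\fPsi_j}^{\vk}(\dn_j\xi)$ (respectively $\I_{\fPhi}^{\vk}(\dn_J\xi)$), so the $J'$-th partial sum of the left-hand side of \eqref{dwf:lim} becomes
\[
(2\pi)^d\int_{\dR}\ff(\xi)\sum_{\vk\in\dZ}\Big[\ol{\fg(\xi+2\pi\dn_J^{-1}\vk)}\,\I_{\fPhi}^{\vk}(\dn_J\xi)+\sum_{j=J}^{J'-1}\ol{\fg(\xi+2\pi\dn_j^{-1}\vk)}\,\I_{\fPsi_j}^{\vk}(\dn_j\xi)\Big]\,d\xi.
\]
Using the convention $\I^{\vk}=0$ off $\dZ$, I would reindex each inner $\vk$-sum by $\vm=\dn_j^{-1}\vk\in\gL$: writing $\vk=\dn_j\vm$ and noting $\I_{\fPsi_j}^{\dn_j\vm}=0$ whenever $\vm\in\gL\bs(\dn_j^{-1}\dZ)$, the bracket turns into a single sum over $\vm\in\gL$ of $\ol{\fg(\xi+2\pi\vm)}$ times the partial kernel $\I_{\fPhi}^{\dn_J\vm}(\dn_J\xi)+\sum_{j=J}^{J'-1}\I_{\fPsi_j}^{\dn_j\vm}(\dn_j\xi)$, which is precisely the left side of \eqref{basic:cond:2} (truncated).

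Two finiteness facts, one from each half of \eqref{cond:dn}, then organize the limit. Because $\ff,\fg$ are compactly supported and $\gL$ has no accumulation point, for $\xi$ in the support of $\ff$ only finitely many $\vm\in\gL$ put $\xi+2\pi\vm$ in the support of $\fg$, so the $\vm$-sum is finite uniformly in such $\xi$. For fixed $\vm\in\gL\bs\{\0\}$ the term $\I_{\fPsi_j}^{\dn_j\vm}(\dn_j\xi)$ is nonzero only when $\dn_j\vm\in\dZ$; since $\|\dn_j\vm\|\to0$, eventually $0<\|\dn_j\vm\|<1$ forces $\dn_j\vm\notin\dZ$, so only finitely many $j$ contribute and the sum in \eqref{basic:cond:2} is genuinely finite. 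Granting \eqref{basic:cond:1} and \eqref{basic:cond:2}, I split the $\vm$-sum into $\vm=\0$ and $\vm\ne\0$. For each $\vm\ne\0$ the partial kernel stabilizes to the full kernel, which vanishes a.e. by \eqref{basic:cond:2}, so the off-diagonal terms drop out as $J'\to\infty$. The surviving $\vm=\0$ term is $(2\pi)^d\int\ff\,\ol{\fg}\big[\I_{\fPhi}^{\0}(\dn_J\cdot)+\sum_{j=J}^{J'-1}\I_{\fPsi_j}^{\0}(\dn_j\cdot)\big]$; putting $\fh:=\ol{\ff}\fg\in\dD$, its limit equals $(2\pi)^d\la1,\fh\ra=(2\pi)^d\la\ff,\fg\ra$ by \eqref{basic:cond:1}. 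This is exactly \eqref{dwf:lim}, giving the sufficiency.

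For the converse I would localize. Assuming the displayed limit equals $(2\pi)^d\la\ff,\fg\ra=(2\pi)^d\int\ff\,\ol{\fg}$ for all $\ff,\fg\in\dD$, note that $\gL$ is discrete and closed, so each $\vm_0\in\gL$ is isolated; choosing the supports of $\ff$ and $\fg$ in small balls about $\xi_0$ and $\xi_0+2\pi\vm_0$ respectively isolates the single index $\vm=\vm_0$. For $\vm_0\ne\0$ the right side $\int\ff\,\ol{\fg}$ vanishes by disjoint supports, the $j$-series is finite, and letting $\ff$ and the translate of $\fg$ range over test functions yields, by the fundamental lemma of the calculus of variations, that the full kernel is $0$ a.e., i.e. \eqref{basic:cond:2}; for $\vm_0=\0$, substituting $\fh=\ol{\ff}\fg$ gives \eqref{basic:cond:1} for every $\fh$ supported in a small ball, and linearity with a partition of unity extends it to all $\fh\in\dD$. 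The main obstacle is the careful handling of the $J'\to\infty$ limit: justifying the termwise passage in the off-diagonal part rests entirely on the second condition $\|\dn_j\vk\|\to0$ in \eqref{cond:dn} making each fixed-$\vm$ series terminate, while separating the contributions of distinct $\vm\in\gL$ in the converse relies on $\gL$ having no accumulation point; all interchanges of the (finite) $\vm$-sum, the $\ell$-sum, and the integral are then routine.
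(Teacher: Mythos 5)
Your proposal is correct and follows essentially the same route as the paper's proof: apply Lemma~\ref{lem:converg} scale by scale and reindex over $\gL$ to get the paper's identity \eqref{S:eq1}, use the two halves of \eqref{cond:dn} to make the $\vm$-sum and each fixed-$\vm$ series finite (the paper's \eqref{S:eq1:0}), kill the off-diagonal terms by \eqref{basic:cond:2} and identify the diagonal term with \eqref{basic:cond:1} via $\fh=\ol{\ff}\fg$, and prove the converse by localizing $\ff$ and $\fg$ near $\xi_0$ and $\xi_0+2\pi\vm_0$ using that $\gL$ is discrete and closed. The only cosmetic difference is at the end of the converse, where you re-localize and use a partition of unity to recover \eqref{basic:cond:1}, whereas the paper, having already established \eqref{basic:cond:2} for all $\xi_0$, passes directly to the global identity \eqref{S:eq2} for arbitrary $\ff,\fg\in\dD$.
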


\begin{proof} For $\ff,\fg\in \dD$, denote
\begin{equation}\label{Sfg}
S_J^{J'}(\ff,\fg):=\sum_{\ell=1}^{\mphi} \sum_{\vk\in \dZ} \la \ff, \fphi^{\ell}_{\dn_J; \0, \vk}\ra
\la \tilde \fphi^{\ell}_{\dn_J; \0, \vk}, \fg\ra
+
\sum_{j=J}^{J'-1} \sum_{\ell=1}^{\mpsi_j} \sum_{\vk\in \dZ}
\la \ff, \fpsi^{j,\ell}_{\dn_j; \0, \vk}\ra
\la \tilde \fpsi^{j,\ell}_{\dn_j; \0, \vk}, \fg\ra.
\end{equation}
By Lemma~\ref{lem:converg}, we have
\begin{equation}\label{S:eq1}
S_J^{J'}(\ff, \fg)=(2\pi)^d \int_{\dR} \sum_{\vk\in \gL}
\ff(\xi)\ol{\fg(\xi+2\pi \vk)}\Big(\I_{\fPhi}^{\dn_J\vk}(\dn_J\xi)+\sum_{j=J}^{J'-1} \I_{\fPsi_j}^{\dn_j\vk}(\dn_j\xi)\Big)d\xi.
\end{equation}
On the one hand, since $\ff$ and $\fg$ are compactly supported,
there exists a positive constant $c$ such that $\ff(\xi)\ol{\fg(\xi+2\pi \vk)}=0$ for all $\xi\in \dR$ and $|\vk|\ge c$. On the other hand, by our assumption in \eqref{cond:dn},
$\{ \vk\in \gL\; : \; |\vk|<c\}$ is a finite set and therefore, there exists a positive integer $J''$ such that $\dn_j \vk\not \in \dZ$ for all $j\ge J''$ and $\vk\in \gL\bs \{\0\}$ with $|\vk|<c$.
That is, under the assumption in \eqref{cond:dn}, for all $J'\ge J''$, \eqref{S:eq1} becomes
\begin{equation}\label{S:eq1:0}
\begin{split}
S_J^{J'}(\ff, \fg)=&(2\pi)^d \int_{\dR}
\ff(\xi)\ol{\fg(\xi)}\Big(\I_{\fPhi}^{\0}(\dn_J\xi)+\sum_{j=J}^{J'-1} \I_{\fPsi_j}^{\0}(\dn_j\xi)\Big) d\xi\\
&+(2\pi)^d \int_{\dR} \sum_{\vk\in \gL\bs \{0\}}
\ff(\xi)\ol{\fg(\xi+2\pi \vk)}\Big( \I_{\fPhi}^{\dn_J\vk}(\dn_J\xi)+\sum_{j=J}^{\infty} \I_{\fPsi_j}^{\dn_j\vk}(\dn_j\xi)\Big) d\xi.
\end{split}
\end{equation}
If \eqref{basic:cond:1} holds, then we deduce from \eqref{S:eq1:0} that
\begin{equation}\label{S:eq2}
S_J^{J'}(\ff, \fg)=(2\pi)^d \int_{\dR}
\ff(\xi)\ol{\fg(\xi)}\Big( \I_{\fPhi}^{\0}(\dn_J\xi)+\sum_{j=J}^{J'-1} \I_{\fPsi_j}^{\0}(\dn_j\xi)\Big) d\xi.
\end{equation}
Now it follows from \eqref{basic:cond:1} that
\begin{equation}\label{S:eq3}
\lim_{J'\to +\infty} S_J^{J'}(\ff, \fg)=(2\pi)^d \la \ff, \fg\ra,\qquad \ff, \fg\in \dD.
\end{equation}
Therefore, the pair in \eqref{fnws:pair} is a pair of frequency-based nonstationary dual wavelet frames in the distribution space $\dDpr$.

Conversely, \eqref{S:eq3} holds. By our assumption in \eqref{cond:dn}, \eqref{S:eq1:0} holds for all $J'\ge J''$. Note that the set $\gL$ is discrete and closed, that is, for any $\vk \in \gL$, $\gep_{\vk}:=\inf_{y\in \gL \bs \{\vk\}}\| y-\vk\|/2>0$. For any $\vk\in \gL \bs \{0\}$ and $\xi_0\in \dR$, we deduce from \eqref{S:eq1:0} and \eqref{S:eq3} that for $J'\ge J''$,
\begin{equation}\label{S:eq4}
(2\pi)^d \int_{\dR}
 \ff(\xi)\ol{\fg(\xi+2\pi \vk)}\Big(\I_{\fPhi}^{\dn_J \vk}(\dn_J\xi)+\sum_{j=J}^{\infty} \I_{\fPsi_j}^{\dn_j \vk}(\dn_j\xi)\Big) d\xi=S_J^{J'}(\ff, \fg)=(2\pi)^d \la \ff, \fg\ra=0
\end{equation}
for all $\ff, \fg\in \dD$ such that $\mbox{supp}\, \ff\subseteq B_{\gep_\vk}(\xi_0)$ and $\mbox{supp}\, \fg\subseteq B_{\gep_\vk}(\xi_0-2\pi \vk)$, where $B_{\gep_\vk}(\xi_0):=\{ \xi\in \dR\, : \, \|\xi-\xi_0\|\le \gep_k\}$ is the ball with center $\xi_0$ and radius $\gep_\vk$.
Now we can easily deduce from the above relation in \eqref{S:eq4} that identity in \eqref{basic:cond:2} holds for almost every $\xi\in B_{\gep_\vk}(\xi_0)$. Since $\xi_0$ is arbitrary, we conclude that \eqref{basic:cond:2} holds. By \eqref{basic:cond:2} and \eqref{S:eq1:0}, we see that \eqref{S:eq2} holds, from which we see that \eqref{basic:cond:1} holds.
\end{proof}

As we shall see in the proof of Corollary~\ref{cor:ndwf}, the condition in \eqref{cond:dn} is automatically satisfied if $\dn_j=\dn^j$ for all $j\ge J$ and all the eigenvalues of $\dn$ are less than one in modulus.
Using a more technical argument, we also see that Theorem~\ref{thm:fndwf:criterion} (as well as other results in this paper) still holds if the assumption, that $\fPhi, \tilde \fPhi, \fPsi_j, \tilde \fPsi_j$ are finite subsets of $\dlLp{2}$ for all $j\ge J$ in Theorem~\ref{thm:fndwf:criterion}, is replaced, for example, by the assumption
\[
\sum_{\fphi\in \fPhi} |\fphi(\cdot)|^2, \qquad \sum_{\tilde \fphi\in \tilde \fPhi} |\tilde \fphi(\cdot)|^2, \qquad  \sum_{\fpsi\in \fPsi_j} |\fpsi(\cdot)|^2, \qquad \sum_{\tilde \fpsi\in \tilde \fPsi_j} |\tilde \fpsi(\cdot)|^2, \qquad j\ge J
\]
belong to $\dlLp{2}$.

The following result is a direct consequence of Theorems~\ref{thm:fndwf} and~\ref{thm:fndwf:criterion}.

\begin{cor}\label{cor:fntwf}
Let $\dn_j, j\ge J$ be $d\times d$ real-valued invertible matrices. Let $\fPhi$ and $\fPsi_j$ be subsets of $\dDpr$ for all $j\ge J$. Then the following statements are equivalent to each other:
\begin{enumerate}
\item[{(i)}] $\FWS_J(\fPhi; \{\fPsi_j\}_{j=J}^\infty)$ is a frequency-based nonstationary tight wavelet frame in $\dLp{2}$, that is, $\fPhi, \fPsi_j \subseteq \dLp{2}$ for all $j\ge J$ and
\begin{equation}\label{fntwf}
\sum_{\fphi\in \fPhi} \sum_{\vk\in \dZ} |\la \ff, \fphi_{\dn_J; \0, \vk}\ra |^2+
\sum_{j=J}^{\infty} \sum_{\fpsi\in \fPsi_j} \sum_{\vk\in \dZ}
|\la \ff, \fpsi_{\dn_j; \0, \vk}\ra|^2
=(2\pi)^d \|\ff\|^2_{\dLp{2}}, \;\; \forall\; \ff\in \dLp{2};
\end{equation}
 \item[{(ii)}] $(\FWS_J(\fPhi; \{\fPsi_j\}_{j=J}^\infty), \FWS_J(\fPhi; \{\fPsi_j\}_{j=J}^\infty))$ is
a pair of frequency-based nonstationary dual wavelet frames in the distribution space $\dDpr$.
\end{enumerate}
If in addition \eqref{cond:dn} holds and all $\fPhi, \fPsi_j, j\ge J$ are finite subsets of $\dlLp{2}$, then any of the above statements is also equivalent to
\begin{enumerate}
\item[{(iii)}] \eqref{basic:cond:1} and \eqref{basic:cond:2} are satisfied with $\tilde \fPhi=\fPhi$ and $\tilde \fPsi_j=\fPsi_j$ for all integers $j\ge J$.
\end{enumerate}
\end{cor}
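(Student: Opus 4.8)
The plan is to read both equivalences directly off the two preceding theorems, specialized to the tight (self-dual) case $\tilde\fPhi=\fPhi$ and $\tilde\fPsi_j=\fPsi_j$. For the equivalence of (i) and (ii), I would invoke Theorem~\ref{thm:fndwf} with identical primal and dual systems. With this choice the two Bessel inequalities \eqref{primal:bound} and \eqref{dual:bound} coincide, the notion of a pair of dual wavelet frames in $\dLp{2}$ becomes exactly the tight-frame identity \eqref{fntwf}, and item (ii) of Theorem~\ref{thm:fndwf} is verbatim statement (ii) of the corollary. Thus Theorem~\ref{thm:fndwf} yields (i) $\Leftrightarrow$ [Bessel bound $+$ (ii)], and it remains only to remove the Bessel hypothesis.

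The key observation is that, in the self-dual case, (ii) already forces the Bessel bound. Indeed, setting $\fg=\ff$ in the perfect-reconstruction identity \eqref{fnws:dwf} gives
\[
\sum_{\fphi\in\fPhi}\sum_{\vk\in\dZ}|\la\ff,\fphi_{\dn_J;\0,\vk}\ra|^2+\sum_{j=J}^{\infty}\sum_{\fpsi\in\fPsi_j}\sum_{\vk\in\dZ}|\la\ff,\fpsi_{\dn_j;\0,\vk}\ra|^2=(2\pi)^d\|\ff\|_{\dLp{2}}^2
\]
for every $\ff\in\dD$, which is precisely \eqref{primal:bound} (hence also \eqref{dual:bound}) with $C=(2\pi)^d$. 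Therefore (ii) $\Rightarrow$ Bessel, so [Bessel $+$ (ii)] collapses to (ii), and (i) $\Leftrightarrow$ (ii) follows. For the forward implication I would also check the convergence conventions in the definition of a distribution-space dual frame: when \eqref{fntwf} holds on all of $\dLp{2}$, the $\ell_2$-membership of the analysis coefficients together with Cauchy--Schwarz gives the absolute convergence of the per-level series in \eqref{series}, and polarizing \eqref{fntwf} produces the limit \eqref{dwf:lim}, so that all of conditions (i)--(ii) of the definition are met.

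For the equivalence with (iii) under the additional hypotheses, I would simply apply Theorem~\ref{thm:fndwf:criterion} with $\tilde\fPhi=\fPhi$ and $\tilde\fPsi_j=\fPsi_j$. Since \eqref{cond:dn} is assumed and all of $\fPhi,\fPsi_j$ are finite subsets of $\dlLp{2}$, that theorem characterizes statement (ii), a distribution-space dual frame, as exactly conditions \eqref{basic:cond:1} and \eqref{basic:cond:2}, which in the self-dual case is statement (iii). Combining this with the already-established equivalence (i) $\Leftrightarrow$ (ii) closes the chain.

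The only genuinely nonformal step, and the one I would expect to require care, is the extraction of the Bessel bound from (ii): everything else is bookkeeping inherited from the two theorems, but one must verify that the distribution-space convergence conventions in \eqref{series}--\eqref{dwf:lim} are strong enough that putting $\fg=\ff$ legitimately produces a genuine, finite Bessel estimate, uniform with constant $C=(2\pi)^d$ and valid for all test functions. This is exactly what licenses the application of Theorem~\ref{thm:fndwf} without separately assuming boundedness of the system.
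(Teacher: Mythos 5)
Your proposal is correct and follows essentially the same route as the paper: (i)$\Leftrightarrow$(ii) via Theorem~\ref{thm:fndwf}, with the key step being that the self-dual identity \eqref{fnws:dwf} evaluated at $\fg=\ff$ yields \eqref{primal:bound} with $C=(2\pi)^d$, and (ii)$\Leftrightarrow$(iii) directly from Theorem~\ref{thm:fndwf:criterion}. The paper's proof is just a terser version of yours; your added remarks on polarization and the convergence conventions merely spell out what the paper calls ``easily deduce.''
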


\begin{proof} By Theorem~\ref{thm:fndwf}, (i)$\Rightarrow$(ii). To see (ii)$\Rightarrow$(i),
since  $(\FWS_J(\fPhi; \{\fPsi_j\}_{j=J}^\infty), \FWS_J(\fPhi; \{\fPsi_j\}_{j=J}^\infty))$ is
a pair of frequency-based nonstationary dual wavelet frames in the distribution space, by definition, we can easily deduce that
\eqref{primal:bound} must hold with $C=(2\pi)^d$. Now by Theorem~\ref{thm:fndwf}, (ii)$\Rightarrow$(i). The equivalence between items (ii) and (iii) follows directly from
Theorem~\ref{thm:fndwf:criterion}.
\end{proof}

We have the following result on a sequence of pairs of frequency-based nonstationary dual wavelet frames in the distribution space.

\begin{theorem}\label{thm:fnws:special}
Let $J_0$ be an integer.  Let $\dn_j, j\ge J_0$ be $d\times d$ real-valued invertible matrices. Let
\begin{equation}\label{fPhij}
\fPhi_j=\{\fphi^{j,1}, \ldots, \fphi^{j,\mphi_j}\},\qquad
\tilde \fPhi_j=\{\tilde\fphi^{j,1}, \ldots, \tilde\fphi^{j,\mphi_j}\}
\end{equation}
and $\fPsi_j, \tilde \fPsi_j$ in \eqref{fPsij} be subsets of $\dDpr$.  Then the following statements are equivalent:
\begin{enumerate}
\item[{\rm(i)}] the pair $(\FWS_J(\fPhi_J; \{\fPsi_j\}_{j=J}^\infty), \FWS_J(\tilde\fPhi_J; \{\tilde\fPsi_j\}_{j=J}^\infty))$ is a pair of frequency-based nonstationary dual wavelet frames in the distribution space $\dDpr$ for every integer $J\ge J_0$;
\item[{\rm(ii)}] the following identities hold:
\begin{equation}\label{phi:to:limit}
\lim_{j\to +\infty} \sum_{\ell=1}^{\mphi_j} \sum_{\vk\in \dZ} \la \ff, \fphi^{j,\ell}_{\dn_j; \0, \vk}\ra
\la \tilde \fphi^{j,\ell}_{\dn_j; \0, \vk}, \fg\ra=(2\pi)^d\la \ff, \fg\ra, \qquad \forall\; \ff, \fg\in \dD
\end{equation}
and
\begin{equation}\label{mra}
\begin{split}
&\sum_{\ell=1}^{\mphi_j} \sum_{\vk\in \dZ} \la \ff, \fphi^{j,\ell}_{\dn_j; \0, \vk}\ra
\la \tilde \fphi^{j,\ell}_{\dn_j; \0, \vk}, \fg\ra
+\sum_{\ell=1}^{\mpsi_j} \sum_{\vk\in \dZ}
\la \ff, \fpsi^{j,\ell}_{\dn_j; \0, \vk}\ra
\la \tilde \fpsi^{j,\ell}_{\dn_j; \0, \vk}, \fg\ra\\
&\qquad\qquad
=\sum_{\ell=1}^{\mphi_{j+1}} \sum_{\vk\in \dZ} \la \ff, \fphi^{j+1,\ell}_{\dn_{j+1}; \0, \vk}\ra
\la \tilde \fphi^{j+1,\ell}_{\dn_{j+1}; \0, \vk}, \fg\ra, \qquad \forall\;  \ff, \fg\in \dD, j\ge J_0
\end{split}
\end{equation}
with all the series converging absolutely.
\end{enumerate}
If in addition \eqref{cond:dn} holds and all $\fPhi_j, \tilde \fPhi_j, \fPsi_j, \tilde \fPsi_j, j\ge J_0$ are finite subsets of $\dlLp{2}$, then any of the above statements is also equivalent to
\begin{enumerate}
\item[{\rm(iii)}] the following identities hold:
\begin{equation}\label{phi:to1}
\lim_{j\to +\infty} \Big \la \sum_{\ell=1}^{\mphi_j} \ol{\fphi^{j,\ell}(\dn_j\cdot)}\tilde \fphi^{j,\ell}(\dn_j\cdot), \fh\Big \ra=\la 1, \fh\ra, \qquad \forall\; \fh\in \dD
\end{equation}
and for all integers $j\ge J_0$,
\begin{equation}\label{cond:mra}
\I_{\fPhi_j}^{\dn_j \vk}(\dn_j\xi)+
\I_{\fPsi_j}^{\dn_j \vk}(\dn_j\xi)=\I_{\fPhi_{j+1}}^{\dn_{j+1} \vk}(\dn_{j+1}\xi), \;\; a.e.\; \xi\in \dR, \vk\in [\dn_j^{-1}\dZ]\cup [\dn_{j+1}^{-1}\dZ],
\end{equation}
where $\I_{\fPsi_j}^{\vk}, \vk\in \dR$ are defined in \eqref{I:Psij} and
\begin{equation}\label{I:Phij}
\I_{\fPhi_j}^{\vk}(\xi):=\sum_{\ell=1}^{\mphi_j} \ol{\fphi^{j,\ell}(\xi)}\tilde \fphi^{j,\ell}(\xi+2\pi \vk), \quad \vk\in \dZ \quad \hbox{and}\quad
\I_{\fPhi_j}^{\vk}:= 0 \qquad \vk \in \dR \bs \dZ.
\end{equation}
\end{enumerate}
\end{theorem}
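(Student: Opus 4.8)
The plan is to treat the two equivalences in turn, writing, for $\ff,\fg\in\dD$ and $j\ge J_0$,
\[
A_j(\ff,\fg):=\sum_{\ell=1}^{\mphi_j}\sum_{\vk\in\dZ}\la\ff,\fphi^{j,\ell}_{\dn_j;\0,\vk}\ra\la\tilde\fphi^{j,\ell}_{\dn_j;\0,\vk},\fg\ra,\qquad B_j(\ff,\fg):=\sum_{\ell=1}^{\mpsi_j}\sum_{\vk\in\dZ}\la\ff,\fpsi^{j,\ell}_{\dn_j;\0,\vk}\ra\la\tilde\fpsi^{j,\ell}_{\dn_j;\0,\vk},\fg\ra,
\]
so that the dual-frame identity \eqref{dwf:lim} at coarsest level $J$ reads $\lim_{J'\to\infty}\bigl(A_J(\ff,\fg)+\sum_{j=J}^{J'-1}B_j(\ff,\fg)\bigr)=(2\pi)^d\la\ff,\fg\ra$, condition \eqref{phi:to:limit} reads $\lim_{j\to\infty}A_j(\ff,\fg)=(2\pi)^d\la\ff,\fg\ra$, and the cascade identity \eqref{mra} reads $A_j(\ff,\fg)+B_j(\ff,\fg)=A_{j+1}(\ff,\fg)$. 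In this language the theorem is essentially a telescoping statement for the family $\{A_j\}$, and I would first prove (i)$\Leftrightarrow$(ii), then (ii)$\Leftrightarrow$(iii) under the extra hypotheses.

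For (ii)$\Rightarrow$(i), summing \eqref{mra} over $j=J,\dots,J'-1$ telescopes to $A_{J'}=A_J+\sum_{j=J}^{J'-1}B_j$; letting $J'\to\infty$ and invoking \eqref{phi:to:limit} yields the frame identity \eqref{dwf:lim} at every level $J\ge J_0$ (and, in passing, shows the $B$-series converges, supplying the convergence clause of the definition). For (i)$\Rightarrow$(ii), differencing the frame identities at consecutive coarsest levels $J$ and $J+1$ (the $A_J$, $A_{J+1}$ terms being independent of the running index $J'$) immediately gives \eqref{mra}; and writing $A_J=(2\pi)^d\la\ff,\fg\ra-\sum_{j\ge J}B_j$, the tail of the convergent series $\sum_{j\ge J_0}B_j$ tends to $0$ as $J\to\infty$, which is exactly \eqref{phi:to:limit}. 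The absolute convergence needed at each step is part of the hypotheses in (i) (the convergence clauses following \eqref{fnws:dwf}) and in (ii), so these manipulations are legitimate; this half requires nothing beyond bookkeeping.

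The substantive half is (ii)$\Leftrightarrow$(iii). Here the $\dlLp{2}$ hypothesis lets me apply Lemma~\ref{lem:converg} with $U=\dn_j$ to rewrite each bilinear quantity as an integral against the bracket functions, obtaining
\[
A_j(\ff,\fg)=(2\pi)^d\int_{\dR}\sum_{\vk\in[\dn_j^{-1}\dZ]}\ff(\xi)\,\ol{\fg(\xi+2\pi\vk)}\,\I_{\fPhi_j}^{\dn_j\vk}(\dn_j\xi)\,d\xi,
\]
and similarly for $B_j$ with $\I_{\fPsi_j}$ and for $A_{j+1}$ with $\I_{\fPhi_{j+1}}$ over $[\dn_{j+1}^{-1}\dZ]$. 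Since \eqref{mra} is a single-level identity, it becomes one integral identity in which only the finitely many shifts $\vk\in([\dn_j^{-1}\dZ]\cup[\dn_{j+1}^{-1}\dZ])\subseteq\gL$ for which $\ff(\xi)\,\ol{\fg(\xi+2\pi\vk)}$ is not identically zero contribute; the localization device from the proof of Theorem~\ref{thm:fndwf:criterion} — choosing $\ff,\fg$ supported in balls of radius below the separation $\gep_\vk$ of the discrete set $\gL$ guaranteed by \eqref{cond:dn} — then isolates each shift and delivers \eqref{cond:mra} a.e., while \eqref{cond:mra} plugged back recovers \eqref{mra}. For the limit conditions, the key observation is that for fixed $\ff,\fg$ the nonzero-shift terms of $A_j$ drop out once $j$ is large: any fixed $\vk\in\gL\bs\{\0\}$ with $\|\vk\|$ below the support bound satisfies $\|\dn_j\vk\|\to0$ by \eqref{cond:dn}, so $\dn_j\vk$, being a nonzero integer vector whenever $\vk\in\dn_j^{-1}\dZ$, must eventually leave $\dZ\bs\{\0\}$, exactly as in Theorem~\ref{thm:fndwf:criterion}. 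Hence $A_j(\ff,\fg)$ reduces to its $\vk=\0$ term $(2\pi)^d\int_{\dR}\ff\,\ol\fg\,\I_{\fPhi_j}^{\0}(\dn_j\cdot)$ for large $j$; taking $\fg=\fh$ and $\ff\equiv1$ on the support of a given $\fh\in\dD$ (so $\ff\,\ol\fg=\ol\fh$) then shows that \eqref{phi:to:limit} holds for all $\ff,\fg\in\dD$ if and only if \eqref{phi:to1} holds for all $\fh\in\dD$.

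The main obstacle I anticipate is precisely this last, Theorem~\ref{thm:fndwf:criterion}-style passage from the $\ff,\fg$-pairing identities to the pointwise a.e. bracket identities: one must use the discreteness of $\gL$ (hence positivity of $\gep_\vk$) from \eqref{cond:dn}, justify the finiteness of the shift sums for compactly supported $\ff,\fg$, and confirm that the nonzero shifts genuinely vanish for large $j$. Everything else — the telescoping of (i)$\Leftrightarrow$(ii) and the rewriting via Lemma~\ref{lem:converg} — is routine once the convergence clauses are tracked. I note an alternative to the direct localization: apply Theorem~\ref{thm:fndwf:criterion} at each coarsest level $J$ to characterize (i) by \eqref{basic:cond:1}--\eqref{basic:cond:2}, and then difference these conditions across consecutive $J$ to recover \eqref{phi:to1} and \eqref{cond:mra}; this reuses the heavy machinery as a black box, at the cost of a short case analysis over which shift lattices $[\dn_j^{-1}\dZ]$ a given $\vk$ belongs to.
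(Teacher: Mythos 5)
Your proposal is correct and follows essentially the same route as the paper: the (i)$\Leftrightarrow$(ii) equivalence by differencing/telescoping the identities \eqref{dwf:lim} and \eqref{mra} (the paper's \eqref{quasirefinable}), and the (ii)$\Leftrightarrow$(iii) equivalence by rewriting each bilinear form via Lemma~\ref{lem:converg} and then reusing the localization and shift-elimination arguments from the proof of Theorem~\ref{thm:fndwf:criterion}, including the observation that nonzero shifts $\dn_j\vk$ eventually leave $\dZ$ under \eqref{cond:dn}. Your reduction of \eqref{phi:to:limit:1} to \eqref{phi:to1} by writing $\ol{\fh}=\ff\,\ol{\fg}$ makes explicit a step the paper leaves implicit, but the argument is the same.
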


\begin{proof} (i)$\Rightarrow$(ii). Considering the difference between two pairs at consecutive levels $j$ and $j+1$, from \eqref{dwf:lim} we see that \eqref{mra} holds. Now by \eqref{mra}, it is straightforward to see that
\begin{equation}\label{quasirefinable}
\begin{split}
\sum_{\ell=1}^{\mphi_J} \sum_{\vk\in \dZ} \la \ff, \fphi^{J,\ell}_{\dn_J;\0, \vk}\ra \la \tilde \fphi^{J,\ell}_{\dn_J;\0,\vk}, \fg\ra &+ \sum_{j=J}^{J'-1} \sum_{\ell=1}^{\mpsi_j} \sum_{\vk\in \dZ} \la \ff, \fpsi^{j,\ell}_{\dn_j;\0, \vk}\ra \la \tilde \fpsi^{j,\ell}_{\dn_j;\0,\vk}, \fg\ra\\
&=
\sum_{\ell=1}^{\mphi_{J'}} \sum_{\vk\in \dZ} \la \ff, \fphi^{J',\ell}_{\dn_{J'};\0, \vk}\ra \la \tilde \fphi^{J',\ell}_{\dn_{J'};\0,\vk}, \fg\ra,\qquad \ff, \fg\in \dD, J'\ge J.
\end{split}
\end{equation}
Therefore, \eqref{phi:to:limit} holds.

(ii)$\Rightarrow$(i). Since all the series in \eqref{mra} converge absolutely, all the series in \eqref{series} converge absolutely. To see item (i), by \eqref{mra}, we deduce that \eqref{quasirefinable} holds.
Now \eqref{dwf:lim} with $\fPhi=\fPhi_J$ and $\tilde \fPhi=\tilde \fPhi_J$ follows directly from \eqref{phi:to:limit} and \eqref{quasirefinable}.

(ii)$\Leftrightarrow$(iii). By Lemma~\ref{lem:converg}, we see that \eqref{mra} is equivalent to
\begin{equation}\label{mra:identity}
\int_{\dR} \sum_{\vk\in \gL_j} \ff(\xi)\ol{\fg(\xi+2\pi \vk)} \Big(\I_{\fPhi_j}^{\dn_j\vk}(\dn_j\xi)+\I_{\fPsi_j}^{\dn_j\vk}(\dn_j\xi)
-\I_{\fPhi_{j+1}}^{\dn_{j+1}\vk}(\dn_{j+1}\xi)\Big)d\xi=0,
\end{equation}
where $\gL_j:=[\dn_j^{-1}\dZ]\cup [\dn_{j+1}^{-1}\dZ]$.
Since $\gL_j$ is discrete, by the same argument as in Theorem~\ref{thm:fndwf:criterion}, we see that
\eqref{mra:identity} is equivalent to \eqref{cond:mra}.

By Lemma~\ref{lem:converg}, we see that \eqref{phi:to:limit} is equivalent to
\begin{equation}\label{phi:to:limit:1}
\lim_{j\to +\infty} \int_{\dR} \sum_{\vk \in [\dn_j^{-1}\dZ]}
\ff(\xi)\ol{\fg(\xi+2\pi \vk)} \I^{\dn_j \vk}_{\fPhi_j}(\dn_j\xi)d\xi=\la \ff, \fg\ra, \qquad \ff,\fg\in \dD.
\end{equation}
By our assumption in \eqref{cond:dn}, as in the proof of Theorem~\ref{thm:fndwf:criterion}, there exists a positive integer $J''$ such that $\ff(\xi)\ol{\fg(\xi+2\pi \vk)} I^{\dn_j\vk}_{\fPhi_j}(\dn_j\xi)=0$ for all $\xi\in \dR$, $\vk\in \gL\bs \{0\}$ and $j\ge J''$. Consequently, for $j\ge J''$, \eqref{phi:to:limit:1} becomes
\[
\lim_{j\to +\infty} \int_{\dR}
\ff(\xi)\ol{\fg(\xi)} \I^{\0}_{\fPhi_j}(\dn_j\xi)d\xi=\int_{\dR} \ff(\xi)\ol{\fg(\xi)}\, d\xi, \qquad \ff,\fg\in \dD,
\]
which is equivalent to \eqref{phi:to1}.
\end{proof}

We point out that \eqref{cond:mra} can be equivalently rewritten as
\begin{equation}\label{cond:mra:2}
\I_{\fPhi_j}^{\vk}(\xi)+
\I_{\fPsi_j}^{\vk}(\xi)=\I_{\fPhi_{j+1}}^{\dn_{j+1} \dn_j^{-1} \vk}(\dn_{j+1}\dn_j^{-1} \xi), \qquad a.e.\; \xi\in \dR, \vk\in \dZ\cup [(\dn_{j+1}\dn_j^{-1})^{-1}\dZ].
\end{equation}
Due to its similarity to MRA (multiresolution analysis) structure (\cite{Daub:book,Mallat:book,Meyer:book}),
the relations in \eqref{mra} (or more general, \eqref{quasirefinable}) could be called the quasi-MRA structure. One important property of dual wavelet frames is its frame approximation order (see \cite{DHRS}). Due to \eqref{quasirefinable}, we observe that the frame approximation order associated with the pair in Theorem~\ref{thm:fnws:special} is completely determined by the generators $\fPhi_j, \tilde \fPhi_j, j\ge J_0$ and it has nothing to do with $\fPsi_j, \tilde \fPsi_j, j\ge J_0$.

The following simple result on matrices will be needed later.

\begin{lemma}\label{lem:matrix}
Let $\dm$ be a $d\times d$ real-valued matrix. Assume that $\gl_1, \ldots, \gl_d$ are all the eigenvalues of $\dm$ ordered in such a way that $|\gl_1|\le |\gl_2|\le \cdots \le |\gl_d|$. Define $\gl_{\min}:=|\gl_1|$ and $\gl_{\max}:=|\gl_d|$.
Then for any $\gep>0$, there is a norm $\|\cdot \|_{\dm}$ on $\dR$ such that
\begin{equation}\label{matrix:norm}
(\gl_{\min}-\gep)\|x\|_{\dm} \le \|\dm x\|_{\dm} \le (\gl_{\max}+\gep)\|x\|_{\dm}, \qquad \forall\; x\in \dR.
\end{equation}
Moreover, if every eigenvalue $\gl$ of $\dm$ satisfying $|\gl|=\gl_{\min}$ has the same algebraic and geometric multiplicity, then $\gl_{\min}-\gep$ on the left-hand side of \eqref{matrix:norm} can be replaced by $\gl_{\min}$. Similarly, if every eigenvalue $\gl$ of $\dm$ satisfying $|\gl|=\gl_{\max}$ has the same algebraic and geometric multiplicity, then $\gl_{\max}-\gep$ on the right-hand side of \eqref{matrix:norm} can be replaced by $\gl_{\max}$. In particular, if $\dm$ is isotropic (that is, $\dm$ is similar to a diagonal matrix with all the diagonal entries having the same modulus), then $\|\dm x\|_{\dm}=|\det \dm|^{1/d} \|x\|_{\dm}$ for all $x\in \dR$. (Conversely, this identity implies that $\dm$ is isotropic.)
\end{lemma}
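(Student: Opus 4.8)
The plan is to reduce everything to the Jordan canonical form of $\dm$ over $\C$ and to build the norm by pulling back the $\ell^\infty$ norm along a suitably rescaled conjugation. First I would choose an invertible (complex) matrix $P$ with $P^{-1}\dm P=J$ in Jordan form, with the eigenvalues $\gl_1,\dots,\gl_d$ on the diagonal and $0$'s and $1$'s on the superdiagonal. The classical trick is to damp the superdiagonal: for $t>0$ let $D_t$ be the block-diagonal scaling that conjugates each Jordan block so that $J_t:=D_t^{-1}JD_t$ carries the eigenvalues on the diagonal and $t$ (in place of $1$) on the superdiagonal. Writing $Q:=PD_t$, I would then \emph{define}
\[
\|x\|_{\dm}:=\|Q^{-1}x\|_\infty,\qquad x\in\dR,
\]
where $\|\cdot\|_\infty$ is the $\ell^\infty$ norm on $\C^d$. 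Since $Q$ is invertible this is a genuine (real) norm on $\dR$, and because $Q^{-1}\dm=J_tQ^{-1}$ one gets $\|\dm x\|_{\dm}=\|J_t(Q^{-1}x)\|_\infty$, so that the two desired inequalities reduce to operator-norm estimates for $J_t$ on $\C^d$.

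For the upper bound I would use that the $\ell^\infty$ operator norm of a matrix is its largest absolute row sum; each row of $J_t$ contributes at most $|\gl_i|+t$ (or just $|\gl_i|$ on the last row of a block), whence $\|J_t\|_\infty\le\gl_{\max}+t$. For the lower bound, when $\dm$ is invertible ($\gl_{\min}>0$) I would estimate $\|J_t^{-1}\|_\infty$: the inverse of a block $\gl I+tN$ equals $\gl^{-1}\sum_{k\ge0}(-t/\gl)^kN^k$, whose row sums tend to $1/|\gl|$ as $t\to0$, so $\|J_t^{-1}\|_\infty\to1/\gl_{\min}$ and therefore $\min_{x\ne0}\|\dm x\|_{\dm}/\|x\|_{\dm}=\|J_t^{-1}\|_\infty^{-1}\to\gl_{\min}$. (If $\gl_{\min}=0$ the lower inequality is vacuous, its left-hand side being nonpositive.) Choosing $t\le\gep$ small enough that both estimates hold simultaneously yields \eqref{matrix:norm}.

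The refinements come essentially for free from the same computation by tracking \emph{which} rows carry the off-diagonal $t$. If every eigenvalue of modulus $\gl_{\max}$ has equal algebraic and geometric multiplicity, its Jordan blocks are $1\times1$, so the rows realizing the maximal modulus contribute exactly $\gl_{\max}$ with no $t$-term; once $t$ is small enough that every competing row sum $|\gl_i|+t$ stays below $\gl_{\max}$, one has $\|J_t\|_\infty=\gl_{\max}$ exactly, and $\gl_{\max}+\gep$ may be replaced by $\gl_{\max}$. The argument for $\gl_{\min}$ is identical applied to $J_t^{-1}$, whose $1\times1$ blocks contribute exactly $1/\gl_{\min}$. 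For the isotropic case all blocks are trivial and all eigenvalues share a common modulus $r$; taking $t=0$ makes $J_0$ diagonal, so $\|J_0y\|_\infty=r\|y\|_\infty$ and hence $\|\dm x\|_{\dm}=r\|x\|_{\dm}$ exactly, with $r=\gl_{\min}=\gl_{\max}=|\det\dm|^{1/d}$ because $|\det\dm|=\prod_i|\gl_i|=r^d$. The stated converse follows because the identity makes $\dm/r$ an isometry of $(\dR,\|\cdot\|_{\dm})$; its forward and backward iterates are then bounded, forcing all eigenvalues of $\dm/r$ to have modulus $1$ and ruling out nontrivial Jordan blocks, i.e.\ $\dm$ is isotropic.

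I expect the main obstacle to be bookkeeping rather than ideas: one must ensure a \emph{single} choice of $t$ serves both inequalities at once, verify that the complex pullback norm is genuinely real-valued and nondegenerate on $\dR$, and, for the sharp refinements, check carefully that the off-diagonal perturbation never touches the rows realizing $\gl_{\max}$ (respectively the rows of $J_t^{-1}$ realizing $1/\gl_{\min}$) while all competing row sums are pushed strictly below the extremal value.
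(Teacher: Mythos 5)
Your proof is correct and takes essentially the same route as the paper: reduce to the Jordan canonical form, damp the superdiagonal entries from $1$ to a small parameter by a diagonal conjugation, and pull back a fixed norm on $\C^d$ along the resulting conjugating matrix. The only difference is cosmetic---the paper pulls back the Euclidean norm ($\|x\|_{\dm}:=\|Gx\|$) where you pull back $\ell^\infty$, which makes your row-sum computations for the sharp refinements and the isotropic case explicit, steps the paper compresses into ``now we can easily deduce.''
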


\begin{proof} There exists a $d\times d$ complex-valued invertible matrix $E$ such that $E \dm E^{-1}$ is the Jordan canonical form of $\dm$. Denote
\[
F=\left[\begin{matrix}
0 &1 &\cdots &0\\
\vdots &\vdots &\ddots &\vdots\\
0 &0 &\cdots &1\\
0 &0 &\cdots &0\end{matrix}\right].
\]
Let $\gl I_r+F$ be a Jordan block in the Jordan canonical form $E \dm E^{-1}$ of $\dm$. For any $\gep \ne 0$, we have $\mbox{diag}(1, \gep^{-1}, \gep^{-2}, \ldots, \gep^{1-r}) (\gl I_r +F) \mbox{diag}(1, \gep, \gep^{2}, \ldots, \gep^{r-1})=\gl I_r +\gep F$. Therefore, $\gl I_r+F$ is similar to $\gl I_r+\gep F$. Consequently, there exists a $d\times d$ complex-valued matrix $G$ such that $G\dm G^{-1}$ is a diagonal block matrix with each block being the form of $\gl I+\gep F$. Define a norm $\|\cdot\|_{\dm}$ on $\dC$ by $\|x\|_{\dm}^2:=\|Gx\|^2:=\bar{x}^T \ol{G}^T G x$. Restricting the norm $\|\cdot \|_{\dm}$ on $\dR$, now we can easily deduce that \eqref{matrix:norm} holds.
\end{proof}

As a direct consequence of Theorem~\ref{thm:fnws:special}, we have the following result on a pair of frequency-based nonhomogeneous dual wavelet frames in the distribution space.

\begin{cor}\label{cor:ndwf} Let $\dm$ be a $d\times d$ real-valued expansive matrix and define $\dn:=(\dm^T)^{-1}$. Let $\fPhi, \tilde \fPhi$ in \eqref{fPhi} and
\begin{equation}\label{fPsi}
\fPsi=\{\fpsi^1, \ldots, \fpsi^\mpsi\} \quad \mbox{and}\quad \tilde \fPsi=\{\tilde \fpsi^1, \ldots, \tilde \fpsi^\mpsi\}
\end{equation}
be finite subsets of $\dlLp{2}$. Then the following statements are equivalent:
\begin{enumerate}
\item[{\rm(i)}] $(\FWS_J(\fPhi; \fPsi), \FWS_J(\tilde \fPhi; \tilde \fPsi))$ is a pair of frequency-based nonhomogeneous dual $\dn$-wavelet frames in the distribution space $\dDpr$ for some integer $J$;
\item[{\rm(ii)}] item (i) holds for all integers $J$;

\item[{\rm(iii)}] for all $\ff, \fg\in \dD$, the following identities hold:
\begin{equation}\label{nws:to:0}
\lim_{j\to +\infty} \sum_{\ell=1}^{\mphi} \sum_{\vk\in \dZ} \la \ff, \fphi^\ell_{\dn^j; \0,\vk}\ra\la \tilde \fphi^\ell_{\dn^j;\0,\vk}, \fg\ra=(2\pi)^d \la \ff, \fg\ra
\end{equation}
and
\begin{equation}\label{nws:mra}
\begin{split}
\sum_{\ell=1}^\mphi \sum_{\vk\in \dZ} \la \ff, \fphi^\ell_{I_d; \0, \vk}\ra \la \tilde \fphi^\ell_{I_d;\0, \vk}, \fg\ra&+\sum_{\ell=1}^{\mpsi} \sum_{\vk\in \dZ} \la \ff, \fpsi^\ell_{I_d; \0, \vk}, \ra \la \tilde \fpsi^\ell_{I_d;\0,\vk}, \fg\ra\\
&=\sum_{\ell=1}^\mphi \sum_{\vk\in \dZ} \la \ff, \fphi^\ell_{\dn; \0, \vk}\ra \la \tilde \fphi^\ell_{\dn;\0, \vk}, \fg\ra;
\end{split}
\end{equation}

\item[{\rm(iv)}] $\lim_{j\to +\infty} \sum_{\ell=1}^\mphi \ol{\fphi^\ell(\dn^j\cdot)} \tilde \fphi^\ell(\dn^j\cdot)=1$ in the sense of distributions and
\begin{equation}\label{nws:I}
\I^\vk_\fPhi(\xi)+\I^\vk_\fPsi (\xi)=\I^{\dn \vk}_{\fPhi}(\dn \xi), \qquad a.e.\, \xi\in \dR, \, \vk\in \dZ \cup [\dn^{-1}\dZ],
\end{equation}
where $\I^\vk_{\fPhi}$ is defined in \eqref{I:Phi} and
\begin{equation}\label{I:fPsi}
\I^\vk_\fPsi(\xi):=\sum_{\ell=1}^\mpsi \ol{\fpsi^\ell(\xi)} \tilde \fpsi^\ell(\xi+2\pi \vk), \quad \vk\in \dZ\quad \mbox{and}\quad  \I^\vk_\fPsi:= 0, \quad \vk \in \dR\bs \dZ.
\end{equation}
%
%
%
\end{enumerate}
\end{cor}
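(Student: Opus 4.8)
The plan is to deduce the corollary by specializing Theorem~\ref{thm:fnws:special} to the stationary data $\dn_j=\dn^j$, $\fPhi_j=\fPhi$, $\tilde\fPhi_j=\tilde\fPhi$, $\fPsi_j=\fPsi$, $\tilde\fPsi_j=\tilde\fPsi$ for all $j$, and then converting the ``all scales $j$'' conditions appearing there into the single-scale conditions (iii) and (iv) by a dilation-invariance argument. Two preliminary facts must be checked before the specialization applies: that hypothesis \eqref{cond:dn} holds automatically in this stationary setting, and that the scale level $J$ is irrelevant.

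First I would verify \eqref{cond:dn} for $\dn_j=\dn^j$, which is the one genuinely technical point. Since $\dm$ is expansive, every eigenvalue of $\dn=(\dm^T)^{-1}$ has modulus strictly less than one, so $\gl_{\max}<1$ in the notation of Lemma~\ref{lem:matrix}. Choosing $\gep>0$ with $r:=\gl_{\max}+\gep<1$, Lemma~\ref{lem:matrix} furnishes a norm $\|\cdot\|_\dn$ on $\dR$ satisfying $\|\dn x\|_\dn\le r\|x\|_\dn$, whence $\|\dn^j x\|_\dn\le r^j\|x\|_\dn\to 0$; this is the second condition in \eqref{cond:dn}. The same inequality gives $\|\dn^{-1}z\|_\dn\ge r^{-1}\|z\|_\dn$, hence $\|\dn^{-j}\vm\|_\dn\ge r^{-j}c_0$ for every $\vm\in\dZ\bs\{\0\}$, where $c_0:=\inf_{\vm\in\dZ\bs\{\0\}}\|\vm\|_\dn>0$. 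Consequently, in any fixed ball only finitely many indices $j\ge J$ admit a nonzero point of $\dn^{-j}\dZ$, and each such (discrete) lattice meets the ball in a finite set; therefore $\gL=\cup_{j\ge J}\dn^{-j}\dZ$ has no accumulation point, and \eqref{cond:dn} holds.

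Next I would record the dilation invariance that removes the dependence on scale. Using the identities \eqref{simple}, for any $F$ one has $\la\ff_{\dn;\0,\0},F_{\dn^{j};\0,\vk}\ra=\la\ff,F_{\dn^{j-1};\0,\vk}\ra$ and $\la\ff_{\dn;\0,\0},\fg_{\dn;\0,\0}\ra=\la\ff,\fg\ra$. Applying this to each generator shows that the nonhomogeneous dual frame identity at scale $J$, evaluated at $(\ff_{\dn;\0,\0},\fg_{\dn;\0,\0})$, coincides with the identity at scale $J-1$ evaluated at $(\ff,\fg)$; since $\ff\mapsto\ff_{\dn;\0,\0}$ is a bijection of $\dD$, the pair is a pair of frequency-based nonhomogeneous dual $\dn$-wavelet frames in $\dDpr$ at one scale if and only if at every scale, which is the equivalence (i)$\Leftrightarrow$(ii). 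The identical computation applied to the quasi-MRA relation \eqref{mra} shows that, in the stationary case, \eqref{mra} holds for all $j$ if and only if it holds for $j=0$, i.e.\ exactly \eqref{nws:mra}; likewise the stationary form \eqref{cond:mra:2} of \eqref{cond:mra} uses $\dn_{j+1}\dn_j^{-1}=\dn$ and collapses to the single identity \eqref{nws:I}, while \eqref{phi:to:limit} and \eqref{phi:to1} are literally \eqref{nws:to:0} and the distributional limit in (iv).

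Finally I would assemble the chain by invoking Theorem~\ref{thm:fnws:special} with $J_0$ arbitrary. Letting $J_0$ range over all integers, statement (i) of that theorem (a dual frame pair for every $J\ge J_0$) becomes statement (ii) of the corollary; its statement (ii), after the single-scale reduction above, becomes statement (iii); and its statement (iii), under the same reduction, becomes statement (iv). Because \eqref{cond:dn} has been verified and the generators lie in finitely many $\dlLp{2}$ functions, all hypotheses of Theorem~\ref{thm:fnws:special} are met, and combining its equivalences with (i)$\Leftrightarrow$(ii) proved directly gives (i)$\Leftrightarrow$(ii)$\Leftrightarrow$(iii)$\Leftrightarrow$(iv). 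I expect the only nonroutine step to be the no-accumulation-point part of \eqref{cond:dn}; the remainder is a direct specialization of Theorem~\ref{thm:fnws:special} together with the dilation bookkeeping coming from \eqref{simple}.
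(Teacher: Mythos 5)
Your proposal is correct and takes essentially the same route as the paper: verify \eqref{cond:dn} via Lemma~\ref{lem:matrix} (expansiveness of $\dm$ makes all eigenvalues of $\dn$ less than one in modulus, giving both the decay $\|\dn^j x\|\to 0$ and the discreteness of $\gL$), and then specialize Theorem~\ref{thm:fnws:special} to the stationary data $\dn_j=\dn^j$, $\fPsi_j=\fPsi$. The dilation-invariance bookkeeping you spell out---collapsing the multi-scale conditions \eqref{mra} and \eqref{cond:mra:2} to the single-scale identities \eqref{nws:mra} and \eqref{nws:I}, and deducing (i)$\Leftrightarrow$(ii)---is precisely what the paper compresses into ``all the claims follow directly from Theorem~\ref{thm:fnws:special}.''
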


\begin{proof} We first show that the condition in \eqref{cond:dn} is satisfied. Since $\dn_j=\dn^j$ and all the eigenvalues of $\dn$ are less than one in modulus, by Lemma~\ref{lem:matrix}, it is trivial to conclude that $\lim_{j\to \infty} \| \dn^j x\|=0$ for all $x\in \dR$. By Lemma~\ref{lem:matrix} again, it is also trivial to see that the set $\gL:=\cup_{j=J}^\infty [\dn^{-j} \dZ]=
\cup_{j=J}^\infty [(\dm^T)^{j} \dZ]$ is discrete. So, the condition in \eqref{cond:dn} is satisfied.

Now all the claims in Corollary~\ref{cor:ndwf} follow directly from Theorem~\ref{thm:fnws:special}.
\end{proof}

As another application of the notion of a pair of frequency-based nonstationary dual wavelet frames in the distribution space, we have the following result which naturally connects a nonstationary filter bank with wavelets and framelets in the distribution space.

\begin{theorem}\label{thm:fb}
Let $\{\dm_j\}_{j=1}^\infty$ be a sequence of $d\times d$ integer invertible matrices.
Define $\dn_0:=I_d$ and $\dn_j:=(\dm_1^T\cdots \dm_j^T)^{-1}$ for all $j \in \N$. Assume that \eqref{cond:dn} holds.
Let $\fa_j$ and $\tilde \fa_j$, $j\in \N$ be $2\pi\dZ$-periodic measurable functions
such that there exist positive numbers $\gep, \tilde \gep, C_j, \tilde C_j, j\in \N$ satisfying
\begin{equation}\label{mask:cond}
|1-\fa_j(\dn_j\xi)|\le C_j \|\xi\|^\gep, \qquad |1-\tilde \fa_j(\dn_j\xi)|\le \tilde C_j \|\xi\|^{\tilde \gep}, \qquad a.e.\; \xi\in \dR, j\in \N
\end{equation}
and
\begin{equation}\label{cond:C}
\sum_{j=1}^\infty C_j<\infty \quad \mbox{and}\quad
\sum_{j=1}^\infty \tilde C_j<\infty.
\end{equation}
For all $j\in \N \cup \{0\}$, define
\begin{equation}\label{Phi:ndwf}
\fphi_{j}(\xi):=\prod_{n=1}^\infty \fa_{j+n}(\dn_{j+n} \dn_j^{-1} \xi) \quad \hbox{and}\quad
\tilde \fphi_{j}(\xi):=\prod_{n=1}^\infty \tilde \fa_{j+n}(\dn_{j+n} \dn_j^{-1} \xi),
\qquad \xi\in \dR.
\end{equation}
Then all $\fphi_{j}$, $\tilde \fphi_{j}, j\in \N\cup\{0\}$ are well-defined functions in $\dlLp{\infty}$ satisfying
\begin{equation}\label{ns:refeq}
\fphi_{j-1}(\dm_j^T \xi)=\fa_j(\xi)\fphi_j(\xi)\quad \mbox{and} \quad \tilde \fphi_{j-1}(\dm_j^T\xi)=\tilde \fa_j(\xi)\tilde\fphi_j(\xi), \qquad
a.e.\, \xi\in \dR, j\in \N.
\end{equation}
Let $\fth_{j,1}, \ldots, \fth_{j,\mphi_{j-1}}$, $\fb_{j,1}, \ldots, \fb_{j,\mpsi_{j-1}}$ and
$\tilde \fth_{j,1}, \ldots, \tilde \fth_{j,\mphi_{j-1}}$, $\tilde \fb_{j,1}, \ldots, \tilde \fb_{j,\mpsi_{j-1}}$,
with $\mphi_{j-1}, \mpsi_{j-1}\in \N\cup\{0\}$ and $j\in \N$, be $2\pi\dZ$-periodic measurable functions in $\dlLp{2}$. For $j\in \N$, define
\begin{align}
&\fphi^{j-1,\ell}(\xi):=\fth_{j,\ell}(\xi)\fphi_{j-1}(\xi) \quad \mbox{and}\quad \tilde \fphi^{j-1,\ell}(\xi):=\tilde \fth_{j,\ell}(\xi)\tilde \fphi_{j-1}(\xi),\quad \ell=1, \ldots, \mphi_{j-1}, \label{Phi:ns}\\
&\fpsi^{j-1, \ell}(\dm_j^T \xi):=\fb_{j,\ell}(\xi)\fphi_j(\xi) \quad \hbox{and}\quad \tilde \fpsi^{j-1, \ell}(\dm_j^T \xi):=\tilde \fb_{j,\ell}(\xi)\tilde \fphi_j(\xi), \quad \ell=1, \ldots, \mpsi_{j-1}.\label{Psi:ns}
\end{align}
Then $\fPhi_j, \tilde \fPhi_j$ in \eqref{fPhij} and $\fPsi_j, \tilde \fPsi_j$ in \eqref{fPsij} are subsets of $\lLp{2}$ for all $j\in \N\cup\{0\}$. Moreover, the pair
\begin{equation}\label{mra:ndwf:seq}
(\FWS_J(\fPhi_J; \{ \fPsi_{j}\}_{j=J}^{\infty}), \FWS_J(\tilde \fPhi_J; \{\tilde \fPsi_{j}\}_{j=J}^{\infty}))
\end{equation}
is a pair of frequency-based nonstationary dual wavelet frames in the distribution space $\Dpr$ for every integer $J\ge 0$, if and only if, for all $j\in \N$,
\begin{align}
&\Theta_j(\dm_j^T\xi) \ol{\fa_j(\xi)}\tilde \fa_j(\xi)+\sum_{\ell=1}^{\mpsi_{j-1}} \ol{\fb_{j,\ell}(\xi)}\tilde \fb_{j,\ell}(\xi)=\Theta_{j+1}(\xi),\qquad a.e.\, \xi\in \sigma_{\fphi_j}\cap \sigma_{\tilde \fphi_j},\label{nep:1}\\
&\Theta_j(\dm_j^T\xi) \ol{\fa_j(\xi)}\tilde \fa_j(\xi+2\pi\omega)+\sum_{\ell=1}^{\mpsi_{j-1}} \ol{\fb_{j,\ell}(\xi)}\tilde \fb_{j,\ell}(\xi+2\pi \omega)=0,
\qquad a.e.\, \xi\in \sigma_{\fphi_j}\cap (\sigma_{\tilde \fphi_j}-2\pi \omega)\label{nep:2}
\end{align}
for all $\omega\in \dmfcg{\dm_j}\bs \{\0\}$ with $\dmfcg{\dm_j}:=[(\dm_j^T)^{-1}\dZ]\cap [0,1)^d$,
and
\begin{equation}\label{Theta:to1}
\lim_{j\to +\infty} \Theta_{j+1}(\dn_j\cdot)=1\qquad \mbox{in the sense of distributions},
\end{equation}
where
\begin{equation}\label{sigmaphij}
\sigma_{\fphi_j}:=\Big \{ \xi\in \dR\; : \; \sum_{\vk\in \dZ}
|\fphi_j(\xi+2\pi \vk)|\ne 0 \Big \}, \quad
\sigma_{\tilde \fphi_j}:=\Big \{ \xi\in \dR\; : \; \sum_{\vk\in \dZ}
|\tilde \fphi_j(\xi+2\pi \vk)| \ne 0 \Big\}
\end{equation}
and
\begin{equation}\label{def:Theta}
\Theta_{j}(\xi):=\sum_{\ell=1}^{\mphi_{j-1}} \ol{\fth_{j,\ell}(\xi)}\tilde \fth_{j,\ell}(\xi), \qquad j\in \N.
\end{equation}
\end{theorem}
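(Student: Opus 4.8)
The plan is to deduce the result from Theorem~\ref{thm:fnws:special} by verifying its hypotheses and then translating its conditions (iii), namely \eqref{phi:to1} and \eqref{cond:mra}, into the filter-bank conditions \eqref{nep:1}--\eqref{Theta:to1}. First I would settle the preliminary claims. For the infinite products in \eqref{Phi:ndwf}, the bound \eqref{mask:cond} applied with index $j+n$ gives $|1-\fa_{j+n}(\dn_{j+n}\dn_j^{-1}\xi)|=|1-\fa_{j+n}(\dn_{j+n}(\dn_j^{-1}\xi))|\le C_{j+n}\|\dn_j^{-1}\xi\|^\gep$, and since $\sum_m C_m<\infty$ by \eqref{cond:C}, the series $\sum_{n\ge1}|1-\fa_{j+n}(\dn_{j+n}\dn_j^{-1}\xi)|$ converges uniformly on compacta; hence each product converges locally uniformly to a function in $\dlLp{\infty}$, and similarly for $\tilde\fphi_j$. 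Splitting off the $n=1$ factor and using $\dn_{j-1}^{-1}\dm_j^T=\dn_j^{-1}$ (so that $\dn_{j-1+n}\dn_{j-1}^{-1}\dm_j^T=\dn_{j-1+n}\dn_j^{-1}$) yields at once the refinement relations \eqref{ns:refeq}. Because $\fth_{j,\ell},\fb_{j,\ell}$ and their tildes are $2\pi\dZ$-periodic elements of $\dlLp{2}$ while $\fphi_j,\tilde\fphi_j\in\dlLp{\infty}$, the definitions \eqref{Phi:ns}--\eqref{Psi:ns} place all generators in $\dlLp{2}$ (invertibility of $\dm_{j+1}^T$ being used for $\fpsi^{j,\ell}$). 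With \eqref{cond:dn} assumed, Theorem~\ref{thm:fnws:special} applies, so the pair in \eqref{mra:ndwf:seq} is a pair of frequency-based nonstationary dual wavelet frames in $\dDpr$ for all $J\ge0$ if and only if \eqref{phi:to1} holds and \eqref{cond:mra} holds for all $j\ge0$.

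Next I would convert \eqref{cond:mra}, in the equivalent form \eqref{cond:mra:2}, into \eqref{nep:1}--\eqref{nep:2}. Here the key identity is $\dn_{j+1}\dn_j^{-1}=(\dm_{j+1}^T)^{-1}$, whence $[(\dn_{j+1}\dn_j^{-1})^{-1}\dZ]=[\dm_{j+1}^T\dZ]\subseteq\dZ$, so \eqref{cond:mra:2} need only be tested against $\vk\in\dZ$. Taking \eqref{cond:mra:2} at level $j-1$ and replacing $\xi$ by $\dm_j^T\xi$, I would insert the factorizations \eqref{Phi:ns}, \eqref{Psi:ns} together with $\fphi_{j-1}(\dm_j^T\xi)=\fa_j(\xi)\fphi_j(\xi)$ and $\tilde\fphi_{j-1}(\dm_j^T\xi+2\pi\vk)=\tilde\fa_j(\xi+2\pi\nu)\tilde\fphi_j(\xi+2\pi\nu)$, where $\nu:=(\dm_j^T)^{-1}\vk$. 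Using $2\pi\dZ$-periodicity of the masks and of $\Theta_j(\dm_j^T\cdot)$, both $\I_{\fPhi_{j-1}}^{\vk}(\dm_j^T\xi)$ and $\I_{\fPsi_{j-1}}^{\vk}(\dm_j^T\xi)$ factor as $\ol{\fphi_j(\xi)}\tilde\fphi_j(\xi+2\pi\nu)$ times a $2\pi\dZ$-periodic bracket that depends on $\vk$ only through its coset representative $\omega\in\dmfcg{\dm_j}$ (writing $\nu=\omega+\vm$, $\vm\in\dZ$), while $\I_{\fPhi_j}^{\nu}(\xi)$ equals $\ol{\fphi_j(\xi)}\tilde\fphi_j(\xi+2\pi\nu)\Theta_{j+1}(\xi)$ when $\omega=\0$ and vanishes when $\omega\ne\0$. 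Thus \eqref{cond:mra:2} amounts to saying that, for each coset $\omega$, the common factor $\ol{\fphi_j(\xi)}\tilde\fphi_j(\xi+2\pi\nu)$ annihilates the bracket, which is the left side of \eqref{nep:1} minus $\Theta_{j+1}$ when $\omega=\0$ and the left side of \eqref{nep:2} when $\omega\ne\0$.

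The step I expect to be the main obstacle is showing that these factored identities are equivalent to the bracket vanishing exactly on the sets $\sigma_{\fphi_j}\cap\sigma_{\tilde\fphi_j}$ and $\sigma_{\fphi_j}\cap(\sigma_{\tilde\fphi_j}-2\pi\omega)$ of \eqref{nep:1}--\eqref{nep:2}. Since the bracket $D$ is $2\pi\dZ$-periodic, I would argue that if $D\ne0$ on a set of positive measure then, running the identity through all shifts $\xi\mapsto\xi+2\pi\vm'$, one forces $\fphi_j(\xi+2\pi\vk)\,\tilde\fphi_j(\xi+2\pi\omega+2\pi\vm)=0$ for all $\vk,\vm$; by the definition \eqref{sigmaphij} this says precisely that $\xi\notin\sigma_{\fphi_j}\cap(\sigma_{\tilde\fphi_j}-2\pi\omega)$. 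Hence $D$ vanishes a.e. on the relevant $\sigma$-set, giving \eqref{nep:1}--\eqref{nep:2}; conversely, off these $\sigma$-sets the common factor already vanishes, so \eqref{nep:1}--\eqref{nep:2} return \eqref{cond:mra:2}. Carried out for every $j\ge0$, this matches \eqref{cond:mra} with \eqref{nep:1}--\eqref{nep:2} for all $j\in\N$, the unit index shift arising from the refinement step.

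Finally I would identify \eqref{phi:to1} with \eqref{Theta:to1}. By the same factorization the integrand of \eqref{phi:to1} equals $\ol{\fphi_j(\dn_j\cdot)}\,\tilde\fphi_j(\dn_j\cdot)\,\Theta_{j+1}(\dn_j\cdot)$, and since $\fphi_j(\dn_j\xi)=\prod_{n\ge1}\fa_{j+n}(\dn_{j+n}\xi)$ with $\sum_{n\ge1}|1-\fa_{j+n}(\dn_{j+n}\xi)|\le\|\xi\|^\gep\sum_{m>j}C_m\to0$ uniformly on compacta, both $\fphi_j(\dn_j\cdot)$ and $\tilde\fphi_j(\dn_j\cdot)$ tend to $1$ uniformly on the support of any $\fh\in\dD$ and stay uniformly bounded there. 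This lets me replace the scaling factor by $1$ inside the pairing and conclude that \eqref{phi:to1} is equivalent to \eqref{Theta:to1}. Combining the three translations, the pair in \eqref{mra:ndwf:seq} is a pair of frequency-based nonstationary dual wavelet frames in $\dDpr$ for all $J\ge0$ precisely when \eqref{nep:1}--\eqref{nep:2} hold for all $j\in\N$ and \eqref{Theta:to1} holds, which is the assertion. Absolute convergence of all series throughout is guaranteed by Lemma~\ref{lem:converg}.
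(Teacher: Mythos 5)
Your proposal is correct and follows essentially the same route as the paper's own proof: both reduce everything to Theorem~\ref{thm:fnws:special} (with Lemma~\ref{lem:converg} supplying absolute convergence), obtain the infinite products and the refinement relations \eqref{ns:refeq} from the summability in \eqref{cond:C}, convert \eqref{cond:mra} into \eqref{nep:1}--\eqref{nep:2} by inserting the factorizations \eqref{Phi:ns}--\eqref{Psi:ns} and using that $\dm_j$ is an integer matrix, and identify \eqref{phi:to1} with \eqref{Theta:to1} via the uniform convergence and uniform two-sided bounds of $\fphi_j(\dn_j\cdot)$, $\tilde\fphi_j(\dn_j\cdot)$ on compacta (the paper's \eqref{est:3}--\eqref{est:4}). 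The only differences are ones of detail: you spell out the $\sigma_{\fphi_j}$-set equivalence that the paper dismisses as ``easy to deduce,'' while being terser on the quantitative product estimates \eqref{est:1}--\eqref{est:2}; the structure of the argument is the same.
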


\begin{proof} Using the inequality $|z|\le e^{|1-z|}$ for all $z\in \C$, by \eqref{mask:cond}, we deduce that
\begin{equation}\label{est:1}
\Big| \prod_{n=m_1}^{m_2} \fa_{j+n} (\dn_{j+n}\xi)\Big|
\le e^{\sum_{n=m_1}^{m_2} |1-\fa_{j+n}(\dn_{j+n}\xi)|}
\le e^{\|\xi\|^{\gep} \sum_{n=m_1}^{m_2} C_{j+n}}\le e^{C\|\xi\|^\gep}
\end{equation}
for all $m_1, m_2\in \N$ with $m_1<m_2$, where $C:=\sum_{j=1}^\infty C_j<\infty$.
On the other hand, we have the following identity
\[
1-\prod_{n=m_1}^{m_2} \fa_{j+n}(\dn_{j+n}\xi)=\sum_{m=m_1}^{m_2}
\Big( 1-\fa_{j+m}(\dn_{j+m}\xi)\Big)\Big( \prod_{n=m+1}^{m_2} \fa_{j+n}(\dn_{j+n}\xi)\Big),
\]
where we used the convention $\prod_{n=m_2+1}^{m_2}:=1$.
Therefore, by \eqref{mask:cond} and \eqref{est:1}, we have
\begin{equation}\label{est:2}
\Big| 1-\prod_{n=m_1}^{m_2} \fa_{j+n}(\dn_{j+n}\xi)\Big|
\le e^{C\|\xi\|^\gep} \sum_{m=m_1}^{m_2}
|1-\fa_{j+m}(\dn_{j+m}\xi)|\le e^{C\|\xi\|^\gep} \|\xi\|^\gep \sum_{m=m_1}^{m_2} C_{j+m}.
\end{equation}
Since $C=\sum_{j=1}^\infty C_j<\infty$, the above inequality implies the convergence in $\dlLp{\infty}$ of the infinite product $\prod_{n=1}^\infty \fa_{j+n}(\dn_{j+n}\xi)$.
Since $\fphi_{j}(\dn_j\xi)=\prod_{n=1}^\infty \fa_{j+n}(\dn_{j+n}\xi)$, it follows from \eqref{est:1} that $\fphi_j\in \dlLp{\infty}$. Since all $\fth_{j,\ell}, \fb_{j,\ell}\in \dlLp{2}$, it is evident that all $\fphi^{j,\ell}, \fpsi^{j,\ell}$ are elements in $\dlLp{2}$. Similarly, we can prove that $\tilde \fphi_j\in \dlLp{\infty}$ and all $\tilde \fphi^{j,\ell}, \tilde \fpsi^{j,\ell}$ are elements in $\dlLp{2}$.

By \eqref{Phi:ndwf}, we have $\fphi_{j-1}(\dn_{j-1}\xi)=\fa_j(\dn_j\xi)\fphi_j(\dn_j\xi)$, from which we can easily deduce that \eqref{ns:refeq} holds. Since $\dm_j^T \dn_j=\dn_{j-1}$ and $\dm_j$ is an integer matrix, we see that \eqref{cond:mra} with $j$ being replaced by $j-1$ is equivalent to
\begin{equation}\label{cond:mra:fb}
\begin{split}
\sum_{\ell=1}^{\mphi_{j-1}}\ol{\fphi^{j-1,\ell}(\dm_j^T\xi)}
\tilde \fphi^{j-1,\ell}(\dm_j^T(\xi+2\pi \vk+2\pi \omega))
&+\sum_{\ell=1}^{\mpsi_{j-1}} \ol{\fpsi^{j-1,\ell}(\dm_j^T\xi)}
\tilde \fpsi^{j-1,\ell}(\dm_j^T (\xi+2\pi \vk+2\pi \omega))\\
&=\gd(\omega) \sum_{\ell=1}^{\mphi_j} \ol{\fphi^{j,\ell}(\xi)}\tilde \fphi^{j,\ell}(\xi+2\pi \vk+2\pi \omega)
\end{split}
\end{equation}
for almost every $\xi\in \dR$ and for all $\vk\in \dZ$ and $\omega\in \dmfcg{\dm_j}$, where $\gd(0)=1$ and $\gd(\omega)=0$ for all $\omega \ne 0$. Now by \eqref{Phi:ns} and \eqref{Psi:ns}, it is easy to deduce that \eqref{cond:mra:fb} is equivalent to
\eqref{nep:1} and \eqref{nep:2}.

By \eqref{est:2}, we deduce that
\begin{equation}\label{est:3}
|1-\fphi_j(\dn_j\xi)|\le e^{C \|\xi\|^\gep}\|\xi\|^\gep\sum_{m=j+1}^\infty C_m
\quad\mbox{and}\quad
|1-\tilde\fphi_j(\dn_j\xi)|\le e^{\tilde C \|\xi\|^{\tilde \gep}}\|\xi\|^{\tilde \gep} \sum_{m=j+1}^\infty \tilde C_m,
\end{equation}
where $\tilde C:=\sum_{j=1}^\infty \tilde C_j<\infty$. By \eqref{cond:C}, the above inequalities imply that for any bounded set $K$,
there exists a positive integer $J_K$ such that
\begin{equation}\label{est:4}
\frac{1}{2}\le |\fphi_j(\dn_j\xi)|\le \frac{3}{2}, \qquad
\frac{1}{2}\le |\tilde \fphi_j(\dn_j\xi)|\le \frac{3}{2}, \qquad a.e.\; \xi\in K, j\ge J_K.
\end{equation}
It follows from \eqref{Phi:ns} that
\[
\sum_{\ell=1}^{\mphi_j} \ol{\fphi^{j,\ell}(\dn_j\xi)}\tilde \fphi^{j,\ell}(\dn_j\xi)=\Theta_{j+1}(\dn_j\xi)
\ol{\fphi_j(\dn_j\xi)} \tilde \fphi_j(\dn_j\xi).
\]
By \eqref{est:3}, \eqref{est:4} and the above identity, using Lebesgue dominated convergence theorem, we conclude that $\lim_{j\to \infty} \sum_{\ell=1}^{\mphi_j} \ol{\fphi^{j,\ell}(\dn_j\cdot)}\tilde \fphi^{j,\ell}(\dn_j\cdot)=1$ in the sense of distributions if and only if \eqref{Theta:to1} holds. The proof is now completed by Theorem~\ref{thm:fnws:special}.
\end{proof}

The conditions in \eqref{nep:1} and \eqref{nep:2} in Theorem~\ref{thm:fb} generalize the well-known Oblique Extension Principle (OEP) in \cite{CHS,DH:dwf,DHRS} (also see \cite{E,EH,Han:oep,HanMo:acha}) from homogeneous wavelet systems to nonhomogeneous and nonstationary wavelet systems. The stationary case of OEP with $\mphi_j=1$ and $\fth_{j,1}=\tilde \fth_{j,1}=1$ is given in \cite{RonShen:dwf,RonShen:twf} for homogeneous wavelet systems.
Let $\dm$ be a $d\times d$ integer expansive matrix and $\dn:=(\dm^T)^{-1}$. In Theorem~\ref{thm:fb}, if $\dm_j=\dm, \fa_j=\fa, \tilde \fa_j=\tilde \fa$ for all $j\in \N$ and if there exist positive numbers $\gep, \tilde \gep, C, \tilde C$ such that
\begin{equation}\label{mask:cond:a:ta}
|1-\fa(\xi)|\le C\|\xi\|^\gep \quad \mbox{and}\quad
|1-\tilde\fa(\xi)|\le C\|\xi\|^{\tilde \gep}, \qquad a.e.\, \xi\in \dR,
\end{equation}
by Lemma~\ref{lem:matrix}, then it is easy to see that \eqref{cond:dn},
\eqref{mask:cond}, and \eqref{cond:C} are satisfied.

\section{Nonhomogeneous and Directional Nonstationary Tight Framelets in $\dLp{2}$}

In this section, we shall study and construct nonhomogeneous (stationary) tight wavelet frames with a minimum number of generators by proving Theorem~\ref{thm:ntwf:special}.
By modifying the construction in Theorem~\ref{thm:ntwf:special}, we also show that we can easily construct directional nonstationary (more precisely, nonhomogeneous quasi-stationary) tight wavelet frames in $\dLp{2}$. Moreover, both constructed systems are associated
with filter banks and can be derived by Theorem~\ref{thm:fb} with such filter banks if the underlying dilation matrix is an integer expansive matrix.

As a direct consequence of Corollary~\ref{cor:fntwf} and Theorem~\ref{thm:fnws:special}, we have the following result.

\begin{cor}\label{cor:ntwf:special}
Let $\dm$ be a $d\times d$ real-valued expansive matrix and define $\dn:=(\dm^T)^{-1}$. Let $J_0$ be an integer. Let $\fPhi$ in \eqref{fPhi} and $\fPsi_j$ in \eqref{fPsij} be finite subsets of $\dlLp{2}$ for all $j\ge J_0$.
Then $\FWS_J(\fPhi; \{\fPsi_j\}_{j=J}^\infty)$ is a frequency-based nonstationary tight wavelet frame in $\dLp{2}$ for every integer $J\ge J_0$, that is, $\fPhi, \fPsi_j \subseteq \dLp{2}$ for all $j\ge J_0$ and
\begin{equation}\label{def:ntwf}
\sum_{\ell=1}^\mphi \sum_{\vk\in \dZ} |\la \ff, \fphi^\ell_{\dn^J; \0, \vk}\ra|^2+\sum_{j=J}^\infty \sum_{\ell=1}^{\mpsi_j} |\la \ff, \fpsi^{j,\ell}_{\dn^j; \0, \vk}\ra|^2=(2\pi)^d \|\ff\|_{\dLp{2}}^2, \qquad \forall\; \ff \in \dLp{2}, J\ge J_0,
\end{equation}
if and only if,
\begin{equation}\label{phi:to:1:special}
\lim_{j\to +\infty} \sum_{\ell=1}^\mphi |\fphi^\ell(\dn^j\cdot)|^2=1\quad \mbox{in the sense of distributions}
\end{equation}
and for all $j\ge J_0$,
\begin{equation}\label{cond:1}
\begin{split}
\sum_{\ell=1}^\mphi \ol{\fphi^\ell(\xi)}\fphi^\ell(\xi+2\pi \vk)&+
\sum_{\ell=1}^{\mpsi_j} \ol{\fpsi^{j,\ell}(\xi)}\fpsi^{j,\ell}(\xi+2\pi \vk)\\
&=\sum_{\ell=1}^\mphi \ol{\fphi^\ell(\dn \xi)}\fphi^\ell(\dn(\xi+2\pi \vk)), \qquad a.e.\, \xi\in \dR, \vk\in \dZ \cap [\dn^{-1}\dZ],
\end{split}
\end{equation}
\begin{align}
&\sum_{\ell=1}^\mphi \ol{\fphi^\ell(\xi)}\fphi^\ell(\xi+2\pi \vk)+
\sum_{\ell=1}^{\mpsi_j} \ol{\fpsi^{j,\ell}(\xi)}\fpsi^{j,\ell}(\xi+2\pi \vk)=0, \qquad a.e.\, \xi\in \dR, \vk\in \dZ \bs [\dn^{-1}\dZ],\label{cond:2}\\
&\sum_{\ell=1}^\mphi \ol{\fphi^\ell(\dn \xi)}\fphi^\ell(\dn(\xi+2\pi \vk))=0, \qquad a.e.\, \xi\in \dR, \vk\in [\dn^{-1}\dZ]\bs \dZ.\label{cond:3}
\end{align}
Moreover, if the following property holds:
\begin{equation}\label{special:gen}
\fh(\xi)\fh(\xi+2\pi \vk)=0 \qquad a.e.\; \xi\in \dR, \vk \in \dZ\bs \{\0\}, \fh\in \fPhi\cup (\cup_{j=J_0}^\infty \fPsi_j),
\end{equation}
then all the conditions in \eqref{cond:1}, \eqref{cond:2}, and \eqref{cond:3} are reduced to the following simple condition
\begin{equation}\label{nep:special}
\sum_{\ell=1}^\mphi |\fphi^\ell(\xi)|^2+\sum_{\ell=1}^{\mpsi_j} |\fpsi^{j,\ell}(\xi)|^2=\sum_{\ell=1}^\mphi |\fphi^\ell(\dn\xi)|^2, \qquad a.e.\, \xi\in \dR, j\ge J_0.
\end{equation}
\end{cor}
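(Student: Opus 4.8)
The plan is to read off this corollary as the tight, stationary specialization of the nonstationary results already established, namely Corollary~\ref{cor:fntwf} and Theorem~\ref{thm:fnws:special}, applied with the self-paired choice $\fPhi_j=\tilde\fPhi_j=\fPhi$, $\tilde\fPsi_j=\fPsi_j$, $\dn_j=\dn^j$, and $\mphi_j=\mphi$ for all $j\ge J_0$. Before invoking those results I would first check that the standing hypothesis \eqref{cond:dn} is automatic here, exactly as in the proof of Corollary~\ref{cor:ndwf}: since $\dm$ is expansive, $\dn=(\dm^T)^{-1}$ has all eigenvalues of modulus strictly less than one, so Lemma~\ref{lem:matrix} yields $\|\dn^j x\|\to0$ as $j\to+\infty$ for every $x\in\dR$ and shows that $\gL=\cup_{j\ge J}[(\dm^T)^j\dZ]$ is discrete with no accumulation point. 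Together with the assumed finiteness of $\fPhi$ and of each $\fPsi_j$ in $\dlLp{2}$, this makes the sharper characterization of Theorem~\ref{thm:fnws:special} available.

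Next I would assemble the equivalences. Applying Corollary~\ref{cor:fntwf}, equivalence (i)$\Leftrightarrow$(ii), separately for each $J\ge J_0$, shows that \eqref{def:ntwf} holds for all $J\ge J_0$ if and only if the self-paired system $(\FWS_J(\fPhi;\{\fPsi_j\}_{j=J}^\infty),\FWS_J(\fPhi;\{\fPsi_j\}_{j=J}^\infty))$ is a pair of frequency-based nonstationary dual wavelet frames in $\dDpr$ for all $J\ge J_0$. This is precisely item (i) of Theorem~\ref{thm:fnws:special} in the present specialization, so its equivalence (i)$\Leftrightarrow$(iii) converts the condition into \eqref{phi:to1} together with \eqref{cond:mra}. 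Substituting $\fphi^{j,\ell}=\tilde\fphi^{j,\ell}=\fphi^\ell$ and $\mphi_j=\mphi$ turns the quantity in \eqref{phi:to1} into $\sum_{\ell=1}^\mphi|\fphi^\ell(\dn^j\cdot)|^2$, so \eqref{phi:to1} becomes exactly \eqref{phi:to:1:special}.

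The core of the argument is translating \eqref{cond:mra}. Here I would use its equivalent form \eqref{cond:mra:2}, in which the stationary identities $\dn_{j+1}\dn_j^{-1}=\dn$ and $\I_{\fPhi_{j+1}}=\I_{\fPhi}$ reduce it to $\I_\fPhi^\vk(\xi)+\I_{\fPsi_j}^\vk(\xi)=\I_\fPhi^{\dn\vk}(\dn\xi)$ a.e.\ for $\vk\in\dZ\cup[\dn^{-1}\dZ]$. I would then split this index set into the three disjoint pieces $\dZ\cap[\dn^{-1}\dZ]$, $\dZ\setminus[\dn^{-1}\dZ]$, and $[\dn^{-1}\dZ]\setminus\dZ$, using that by \eqref{I:Phi} and \eqref{I:Psij} the quantities $\I_\fPhi^\vk$ and $\I_{\fPsi_j}^\vk$ vanish unless $\vk\in\dZ$, while $\I_\fPhi^{\dn\vk}(\dn\xi)$ vanishes unless $\dn\vk\in\dZ$, i.e.\ $\vk\in[\dn^{-1}\dZ]$, and the elementary identity $\fphi^\ell(\dn\xi+2\pi\dn\vk)=\fphi^\ell(\dn(\xi+2\pi\vk))$. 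On the first piece both sides are genuine sums and one obtains \eqref{cond:1}; on the second the right-hand side drops out, giving \eqref{cond:2}; on the third the left-hand side drops out, giving \eqref{cond:3}. This case analysis is the main bookkeeping obstacle, since one must track which of the three $\I$-quantities survives on each piece; once organized, the equivalence is immediate.

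Finally, for the ``Moreover'' statement I would assume \eqref{special:gen}, which forces $\ol{\fphi^\ell(\xi)}\fphi^\ell(\xi+2\pi\vk)=0$ and $\ol{\fpsi^{j,\ell}(\xi)}\fpsi^{j,\ell}(\xi+2\pi\vk)=0$ a.e.\ for every $\vk\in\dZ\setminus\{\0\}$, hence $\I_\fPhi^\vk=\I_{\fPsi_j}^\vk=0$ for such $\vk$. For $\vk\ne\0$, the vector $\dn\vk$, whenever it is integral, is again a nonzero integer vector, so \eqref{special:gen} also kills the surviving right-hand side in each of \eqref{cond:1}, \eqref{cond:2}, \eqref{cond:3}; thus all three identities hold trivially for $\vk\ne\0$. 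The only remaining requirement is the $\vk=\0$ case of \eqref{cond:1}, namely $\sum_{\ell=1}^\mphi|\fphi^\ell(\xi)|^2+\sum_{\ell=1}^{\mpsi_j}|\fpsi^{j,\ell}(\xi)|^2=\sum_{\ell=1}^\mphi|\fphi^\ell(\dn\xi)|^2$, which is exactly \eqref{nep:special}, completing the reduction.
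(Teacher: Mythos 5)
Your proposal is correct and follows essentially the same route as the paper: verify that \eqref{cond:dn} holds automatically via Lemma~\ref{lem:matrix} (as in the proof of Corollary~\ref{cor:ndwf}), specialize Theorem~\ref{thm:fnws:special} and Corollary~\ref{cor:fntwf} to the self-paired stationary choice $\fPhi_j=\tilde\fPhi_j=\fPhi$, $\tilde\fPsi_j=\fPsi_j$, $\dn_j=\dn^j$, translate \eqref{cond:mra} into \eqref{cond:1}--\eqref{cond:3} by splitting the index set $\dZ\cup[\dn^{-1}\dZ]$ into the three pieces, and observe that \eqref{special:gen} kills every nonzero-$\vk$ identity, leaving only the $\vk=\0$ case of \eqref{cond:1}, i.e.\ \eqref{nep:special}. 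The only difference is that you spell out the case analysis the paper dismisses as ``trivial to check,'' which is fine.
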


\begin{proof}
By the proof of Corollary~\ref{cor:ndwf}, we see that the condition in \eqref{cond:dn} is satisfied.  It is trivial to check that
\eqref{cond:mra} with $\fPhi_j:=\tilde \fPhi_j:=\fPhi$ and $\tilde \fPsi_j:=\fPsi_j$ is equivalent to all the conditions in \eqref{cond:1}, \eqref{cond:2}, and \eqref{cond:3}.
Note that the assumption in \eqref{special:gen} simply means $\I^\vk_{\fPhi}(\xi)=0$ and $\I^{\vk}_{\fPsi_j}(\xi)=0$ for almost every $\xi\in \dR$, $\vk\in \dR \bs \{\0\}$, and $j\ge J_0$.
Now it is easy to see that all the conditions in \eqref{cond:1}, \eqref{cond:2}, and \eqref{cond:3} are reduced to the simple condition
in \eqref{nep:special}.
Since $\fPhi_j=\tilde \fPhi_j=\fPhi$ and $\mphi_j=\mphi$, we have $\sum_{\ell=1}^{\mphi_j} \ol{\fphi^{j,\ell}(\dn_j\xi)}\tilde \fphi^{j,\ell}(\dn_j\xi)=
\sum_{\ell=1}^\mphi |\fphi^\ell(\dn^{j}\xi)|^2$. Now it is straightforward to see that \eqref{phi:to1} is equivalent to \eqref{phi:to:1:special}. The proof is completed by Theorem~\ref{thm:fnws:special} and Corollary~\ref{cor:fntwf}.
\end{proof}

The sequence of systems $\FWS_J(\fPhi; \{\fPsi_j\}_{j=J}^\infty), J\ge J_0$ in Corollary~\ref{cor:ntwf:special} is of particular interest in applications. The only difference between these systems in Corollary~\ref{cor:ntwf:special} and nonhomogeneous wavelet systems lies in that the sets $\fPsi_j$ of wavelet generators depend on the scale level $j$ and therefore, the wavelet generators are not stationary.
This freedom of using different sets $\fPsi_j$ at different scales allows us to have different ways of time-frequency partitions at different resolution scales and therefore such a freedom is of importance in certain applications. Since these systems in Corollary~\ref{cor:ntwf:special} are between nonhomogeneous stationary wavelet systems and (fully) nonstationary wavelet systems, we shall call them nonhomogeneous quasi-stationary wavelet systems.

Now we are ready to prove Theorem~\ref{thm:ntwf:special} in Section~1.

\begin{proof}[Proof of Theorem~\ref{thm:ntwf:special}]
Let $\gl_{\min}$ be the absolute value of the smallest eigenvalue of $\dm$ in modulus. Note that $\gl_{\min}>1$. Define $B_{c}(\0):=\{ \xi\in \dR \, : \, \|\xi\|_{\dm^T}<c\}$, where $\|\cdot \|_{\dm^T}$ is the norm in Lemma~\ref{lem:matrix} with $\dm$ being replaced by $\dm^T$ and with an $\gep>0$ satisfying $\gl:=\gl_{\min}-\gep>1$. By  \eqref{matrix:norm}, we see that $B_c(\0)\subseteq B_{\gl c}(\0)\subseteq \dm^T B_c(\0)$.
Choose $\gl_0$ such that $0<\gl_0<1$. For small enough $c$, we have
\begin{equation}\label{Bc0}
\quad B_c(\0)\subseteq B_{\gl c}(\0)\subseteq \dm^T B_c(\0) \subseteq (-\gl_0\pi, \gl_0\pi)^d \quad \mbox{and}\quad
(\dm^T)^2 B_c(\0)\subseteq (-\gl_0\pi, \gl_0\pi)^d.
\end{equation}
For any $\rho>0$, there is an even function $h_\rho$ in $C^\infty(\R)$ such that $h_\rho$ takes value one on $(-\rho/2,\rho/2)$, is positive on $(-\rho,\rho)$, and vanishes on $\R \bs (-\rho,\rho)$.
Such a function $h_\rho$ can be easily constructed, for example, see \cite[Section~4]{Han:frame}. For sufficiently small $\rho>0$, we define
\begin{equation}\label{fphi:sp:0}
\fphi:=\frac{\chi_{B_{(1+\gl)c/2}(\0)}*h_\rho(\|\cdot\|)}{
[\chi_{B_{(1+\gl)c/2}(\0)}*h_\rho(\|\cdot\|)](0)}.
\end{equation}
For sufficiently small $\rho>0$, it is evident that $\fphi$ is an even function in $C^\infty(\dR)$, $0\le \fphi(\xi)\le 1$ for all $\xi\in \dR$, and
\begin{equation}\label{fphi:sp:1}
\fphi(\xi)=1 \qquad \forall\; \xi\in B_c(\0) \quad \mbox{and}\quad \fphi(\xi)=0 \qquad \forall \; \xi\in \dR\bs (\dm^TB_c(\0)).
\end{equation}
%
Now we define
\begin{equation}\label{fpsi:sp:0}
\fpsi(\xi):=\sqrt{\feta(\xi)}\quad \mbox{with}\quad \feta(\xi):=(\fphi(\dn\xi))^2-(\fphi(\xi))^2, \qquad \xi\in \dR,
\end{equation}
where $\dn:=(\dm^T)^{-1}$. By \eqref{Bc0} and \eqref{fphi:sp:1}, we see that $\feta$ vanishes on $B_c(\0)$ and is supported inside $(\dm^T)^2 B_c(\0)\subseteq (-\gl_0\pi, \gl_0\pi)^d$.

Since $\fphi\in C^\infty(\dR)$, it is obvious that $\feta\in C^\infty(\dR)$. Since $0\le \fphi\le 1$ and by \eqref{fphi:sp:1},
\begin{equation}\label{fphi:sp:2}
\fphi(\dn\xi)=1 \quad \mbox{whenever}\; \fphi(\xi)\ne 0.
\end{equation}
We deduce from \eqref{fphi:sp:2} that $0\le \feta(\xi)\le (\fphi(\dn\xi))^2$. Therefore, $\fpsi$ is well defined and
\begin{equation}\label{fpsi:sp:1}
0\le \fpsi(\dm^T \xi)\le \fphi(\xi)\le 1, \qquad \xi\in \dR \quad \mbox{and}\quad \fpsi(\dm^T\xi)=\fphi(\xi), \qquad \xi\in \dR \bs B_c(\0).
\end{equation}
%
To prove that $\fpsi\in C^\infty(\dR)$, we first show that if $\fphi(\xi_0)=0$ or $1$, then all the derivatives of $\fphi$ at $\xi_0$ must vanish. In fact, if $\fphi(\xi_0)=0$, then by the definition of $\fphi$ in \eqref{fphi:sp:0} we see that the supports of $h_\rho(\|\cdot\|)$ and $\chi_{B_{(1+\gl)c/2}(\0)}(\xi_0-\cdot)$ are essentially disjoint. Consequently, it follows from \eqref{fphi:sp:0} that all the derivatives of $\fphi$ must vanish at $\xi_0$. Similarly, if $\fphi(\xi_0)=1$, then by \eqref{fphi:sp:0} we see that the support of $h_\rho(\|\cdot\|)$ is essentially contained inside the support of $\chi_{B_{(1+\gl)c/2}(\0)}(\xi_0-\cdot)$. Consequently, it follows from \eqref{fphi:sp:0} that all the derivatives of $\fphi$ must vanish at $\xi_0$.

Now we show that $\fpsi\in C^\infty(\dR)$. In fact, if $\feta(\xi_0)\ne 0$, since $\feta\in C^\infty$, then it is trivial to see that $\fpsi=\sqrt{\feta}$ is infinitely differentiable at $\xi_0$. If $\feta(\xi_0)=0$, by \eqref{fpsi:sp:0} and \eqref{fphi:sp:2}, we must have either $\fphi(\xi_0)=\fphi(\dn \xi_0)=0$ or $\fphi(\xi_0)=\fphi(\dn \xi_0)=1$. By what has been proved, we see that all the derivatives of $\feta$ must vanish at $\xi_0$. Using the Taylor expansion of $\feta$ in a neighborhood of $\xi_0$, now it is easy to see that $\fpsi=\sqrt{\feta}$ must be infinitely differentiable at $\xi_0$ with all its derivatives at $\xi_0$ being zero.
Therefore, $\fpsi\in C^\infty(\dR)$.

Hence, we constructed two functions $\fphi, \fpsi\in C^\infty(\dR)$ vanishing outside $[-\pi,\pi]^d$ and
\begin{equation}\label{relation:special}
|\fphi(\xi)|^2+|\fpsi(\xi)|^2=|\fphi(\dn\xi)|^2 \qquad \forall\; \xi\in \dR.
\end{equation}
Define $\phi$ and $\psi$ via $\hat \phi:=\fphi$ and $\hat \psi:=\fpsi$. By Corollary~\ref{cor:ntwf:special}, all items (i)--(iii) of Theorem~\ref{thm:ntwf:special} hold.

Now we define $2\pi\dZ$-periodic functions $\fa_\vk, \vk\in \dZ$ by
\begin{equation}\label{fa}
\fa_\vk(\xi):=e^{-\iu \vk \cdot \dm^T\xi} \fphi(\dm^T\xi), \qquad \xi\in [-\pi,\pi)^d, \; \vk\in\dZ.
\end{equation}
By \eqref{fphi:sp:2}, $\fphi(\xi)=1$ whenever $\fphi(\dm^T\xi)\ne 0$.
Since $\fphi(\dm^T\cdot)$ vanishes outside $B_c(\0)\subseteq (-\gl_0\pi, \gl_0\pi)^d$,
now it is straightforward to see that $\fa_\vk\in C^\infty(\dT)$ and
$\fa_\vk(\xi)\fphi(\xi)=e^{-i\vk\cdot \dm^T\xi} \fphi(\dm^T\xi)$ for all $\xi\in \dR$.

Let $\fh_{\gl_0}$ be a $C^\infty$ function such that $\fh_{\gl_0}=1$ on $[-\gl_0\pi, \gl_0\pi]^d$ and $\fh_{\gl_0}=0$ on $[-\pi,-\pi)^d \bs [-(\gl_0+\gep_0)\pi, (\gl_0+\gep_0)\pi]^d$, where $0<\gep_0<1-\gl_0$.
Such a function $\fh_{\gl_0}$ can be easily constructed using tensor product (see \cite[Section~4]{Han:frame}). Now we define $2\pi\dZ$-periodic functions $\fb_\vk, \vk\in \dZ$ by
\begin{equation}\label{fb}
\fb_\vk(\xi):=\begin{cases}
e^{-\iu \vk \cdot \dm^T\xi} \fh_{\gl_0}(\xi) \frac{\fpsi(\dm^T\xi)}{\fphi(\xi)},\qquad  &\text{$\fphi(\xi)\ne 0$ and $\xi\in [-\pi,\pi)^d$;}\\
e^{-\iu \vk \cdot \dm^T\xi} \fh_{\gl_0}(\xi),\qquad  &\text{$\fphi(\xi)=0$ and $\xi\in [-\pi,\pi)^d$.}
\end{cases}
\end{equation}
By \eqref{fpsi:sp:1}, it is trivial to see that $e^{-i\vk \cdot \dm^T\xi} \fpsi(\dm^T\xi)=\fb_\vk(\xi)\fphi(\xi)$ for all $\xi\in \dR$. Now we show that $\fb_\vk\in C^\infty(\dT)$.
Note that $\fb_\vk$ is infinitely differentiable at $\xi$  for all $\xi\in [-\pi,\pi)^d$ such that $\fphi(\xi)\ne 0$.
By the fact that $\fh_{\gl_0}$ vanishes outside $[-(\gl_0+\gep_0)\pi, (\gl_0+\gep_0)\pi]^d$ and $\gl_0+\gep_0<1$, by \eqref{fpsi:sp:1}, we conclude that $\fb_\vk\in C^\infty(\dT)$.
Therefore, item (iv) holds.

Note that \eqref{twf:special} is a direct consequence of Proposition~\ref{prop:frame}. This completes the proof.
\end{proof}

When $\dm$ in Theorem~\ref{thm:ntwf:special} is a $d\times d$ integer expansive matrix, the nonhomogeneous tight wavelet frames in Theorem~\ref{thm:ntwf:special} can be obtained via Theorem~\ref{thm:fb} from its filer bank $(\fa_\0, \fb_\0)$ with
\[
\mphi_j=1, \quad \mpsi_j=1 \quad\mbox{and}\quad \fa_j=\tilde \fa_j=\fa_\0,\quad
\fb_{j,1}=\tilde \fb_{j,1}=\fb_\0,\qquad \fth_{j,1}=\tilde \fth_{j,1}=1, \qquad j\in \Z.
\]

One potential shortcoming of the construction in Theorem~\ref{thm:ntwf:special} is that the support of $\hat \psi$ has a ring structure and $\psi$ behaves like a radial basis function.
In fact, the functions $\hat \phi$ and $\hat \psi$ can be constructed to be radial basis functions for special dilation matrices such as $\dm=2I_d$, see \cite{Han:frame} for more details.
However, as pointed out in \cite{CD}, for high dimensional problems, directionality of a transform is an important feature. All stationary wavelets and framelets have the isotropic structure and cannot capture very well singularities other than the point singularity in high dimensions.

In the rest of this section, going from frequency-based nonhomogeneous tight wavelet frames to frequency-based nonhomogeneous quasi-stationary tight wavelet frames, we shall see that directionality can be easily achieved by modifying the construction in Theorem~\ref{thm:ntwf:special} and using a simple splitting technique in \cite{Han:frame}. The key lies in that $\fphi$ has a very small support and $\fpsi$ is linked to $\fphi$ via the relation in \eqref{relation:special}. By \eqref{nep:special} in Corollary~\ref{cor:ntwf:special},  the single generator $\fpsi$ in \eqref{relation:special} at every scale level can be easily split into many generators $\fpsi^{j,1}, \ldots, \fpsi^{j,\mpsi_j}$ with directionality so that
\begin{equation}\label{fphi:fpsij}
\sum_{\ell=1}^{\mpsi_j} |\fpsi^{j,\ell}(\xi)|^2=|\fpsi(\xi)|^2, \qquad a.e.\; \xi\in \dR.
\end{equation}

In the following, we describe the idea of the splitting technique by increasing angle resolution.
For each integer $j$, pick a positive integer $\mpsi_j$ and construct $C^\infty$ functions $\beta_{j,1}, \ldots, \beta_{j,\mpsi_j}$ on the unit sphere of $\dR$ such that
\begin{equation}\label{partition}
\sum_{\ell=1}^{\mpsi_j} |\beta_{j,\ell}(\xi)|^2=1\qquad \forall\; \xi\in \dR \quad \mbox{with}\quad \|\xi\|=1.
\end{equation}
To capture various types of singularities, preferably the significant energy part of each $\beta_{j,\ell}$ concentrates near a point or an $n$-dimensional manifold on the unit sphere with $0\le n<d$.
Let $\fphi$ and $\fpsi$ be constructed as in the proof of Theorem~\ref{thm:ntwf:special}.
Now, we can split $\fpsi$ by defining
\begin{equation}\label{wavelet:directional}
\fpsi^{j,\ell}(\xi):=\fpsi(\xi)\beta_{j,\ell}(\tfrac{\xi}{\|\xi\|}), \qquad \ell=1, \ldots, \mpsi_j\quad \mbox{and}\quad j\in \Z.
\end{equation}
Since $\fpsi$ vanishes in a neighborhood of the origin, all $\fpsi^{j,\ell}\in C^\infty(\dR)$ and \eqref{fphi:fpsij} is obviously true. Consequently, for all $j\in \Z$,
\begin{equation}\label{fphi:fpsij:sp}
|\fphi(\xi)|^2+\sum_{\ell=1}^{\mpsi_j}|\fpsi^{j,\ell}(\xi)|^2=|\fphi(\dn \xi)|^2, \qquad \xi\in \dR.
\end{equation}
Define $\fPsi_j:=\{\fpsi^{j,1}, \ldots, \fpsi^{j,\mpsi_j}\}$.
Now by Corollary~\ref{cor:ntwf:special} and Theorem~\ref{thm:ntwf:special}, $\FWS_J(\{\fphi\}; \{\fPsi_j\}_{j=J}^\infty)$ is a frequency-based nonstationary tight wavelet frame in $\dLp{2}$ for all integers $J$.

We mention that the main idea of the construction in Theorem~\ref{thm:ntwf:special} and the above splitting technique already appeared in \cite[Proposition~3.8 and Section~4]{Han:frame} for $\dm=2I_d$ and for homogeneous tight wavelet frames in $\dLp{2}$.

Many directional nonhomogeneous quasi-stationary (and nonstationary) tight wavelet frames in $\dLp{2}$ can be constructed. To illustrate the above general procedure, using polar coordinates, we present the following result for the special case $d=2$ and $\dm=2I_2$. In the following result and its proof, note that $\pcr$ and $\theta$ are used for polar coordinates and a complex number $\pcr e^{\iu \theta}$ is identified with the point $(\pcr\cos \theta, \pcr\sin \theta)$, that is, $\R^2$ is identified with the complex plane $\C$.

\begin{theorem}\label{thm:directional}
Let $m$ be a positive integer and $0\le \rho<1$. Then there exist two real-valued functions $\phi$ and $\feta$ in $\dLp{2}\cap C^\infty(\R^2)$ satisfying all the following properties:
\begin{enumerate}
\item[{\rm(i)}] $\hat \phi$ is a compactly supported  radial basis function in $C^\infty(\R^2)$ and there exists a $2\pi\Z^2$-periodic trigonometric function $\fa$ in $C^\infty(\T^2)$ such that $\hat \phi(2\xi)=\fa(\xi) \hat \phi(\xi)$ for all $\xi\in \R^2$;
\item[{\rm(ii)}] $\feta$ has the tensor-product structure in polar coordinates and there exist positive real numbers $\pcr_1, \pcr_2, \theta_0$ such that $\theta_0<\frac{\pi}{m}$ and
\begin{equation}\label{feta:supp}
\mbox{supp}\, \feta=\{ \pcr e^{\iu \theta}\; : \; \pcr_1\le \pcr\le \pcr_2, -\theta_0\le \theta \le \theta_0\};
\end{equation}
\item[{\rm(iii)}] for every nonnegative integer $J$, we have a nonstationary tight wavelet frame in $L_2(\R^2)$:
\begin{equation}\label{ntwf:2D}
\sum_{\vk\in \Z^2} |\la f, \phi_{2^J I_2;\vk}\ra|^2+\sum_{j=J}^\infty \sum_{\ell=1}^{\mpsi_j} \sum_{\vk\in \Z^2} |\la f, \psi^{j,\ell}_{2^jI_2; \vk}\ra|^2=\|f\|_{L_2(\R^2)}^2, \qquad \forall\, f\in L_2(\R^2),
\end{equation}
\end{enumerate}
where $s_j:=m 2^{\lfloor \rho j\rfloor}$, $\lfloor\cdot \rfloor$ is the floor function,
and all $\psi^{j,\ell}$ are real-valued functions in the Schwarz class satisfying the following properties:
\begin{enumerate}
\item[{\rm(1)}] $\psi^{j,0}$ is defined as follows: for $\pcr\ge 0$ and $\theta\in [-\pi,\pi)$,
\begin{equation}
\wh{\psi^{j,0}}(\pcr e^{i\theta}):=
\begin{cases} \feta(\pcr e^{i 2^{\lfloor \rho j\rfloor} \theta})+\feta(-\pcr e^{i 2^{\lfloor \rho j\rfloor} \theta}), &\text{if $\theta\in [-2^{-\lfloor \rho j\rfloor}\pi, 2^{-\lfloor \rho j\rfloor}\pi)$,}\\
0, &\text{if $\theta\in [-\pi,\pi)\bs [-2^{-\lfloor \rho j\rfloor}\pi, 2^{-\lfloor \rho j\rfloor}\pi)$;}
\end{cases}
\end{equation}
\item[{\rm(2)}] all other generators $\psi^{j,\ell}$ are obtained via rotations from $\psi^{j,0}$:  for all $\ell=1, \ldots, \mpsi_j$,
\begin{equation}\label{psi:rotation}
\psi^{j,\ell}(\pcr e^{i\theta}):=\psi^{j,0}( \pcr e^{i\theta} e^{i 2^{-\lfloor \rho j\rfloor}\pi (\ell-1)/m}), \qquad\pcr \ge 0, \theta \in [-\pi, \pi);
\end{equation}
\item[{\rm(3)}] the support of $\wh{\psi^{j,0}_{2^j I_2; \vk}}$ has two parts which are symmetric about the origin and each part obeys $\mbox{width}\approx \mbox{length}^{1-\rho}$. More precisely, for all $\vk\in \Z^2$ and $j\ge 0$,
\begin{equation} \label{psij:supp}
\mbox{supp}\, \wh{\psi^{j,0}_{2^j I_2; \vk}}=
\{ \pcr e^{\iu \theta}, -\pcr e^{\iu \theta} \; : \; 2^j \pcr_1\le \pcr\le 2^j \pcr_2, -2^{j-\lfloor \rho j\rfloor} \theta_0\le \theta \le 2^{j-\lfloor \rho j\rfloor} \theta_0\};
\end{equation}
\item[{\rm(4)}] $\wh{\psi^{j,\ell}}$ are compactly supported functions in $C^\infty(\R^2)$ vanishing in a neighborhood of the origin and there exist $2\pi\Z^2$-periodic trigonometric functions $\fb_{j,\ell}$ in $C^\infty(\T^2)$ such that
%
\begin{equation}\label{psij:refeq}
\wh{\psi^{j,\ell}}(2\xi)=\fb_{j,\ell}(\xi)\hat \phi(\xi), \qquad \xi\in \R^2, \; \ell=1, \ldots, \mpsi_j, \, j\in \N \cup\{0\}.
\end{equation}
\end{enumerate}
\end{theorem}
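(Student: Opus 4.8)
The plan is to reduce the whole construction to the scalar tight-frame relation produced in the proof of Theorem~\ref{thm:ntwf:special} and to obtain directionality by splitting a single radial generator in the angular variable, so that the frame property follows at once from Corollary~\ref{cor:ntwf:special}. First I would run the construction in the proof of Theorem~\ref{thm:ntwf:special} with $d=2$ and $\dm=2I_2$, so that $\dn=\tfrac12 I_2$; since $2I_2$ is isotropic, the norm from Lemma~\ref{lem:matrix} is Euclidean, $B_c(\0)$ is a disc, and the convolution defining $\fphi$ yields a compactly supported radial $\hat\phi=\fphi\in C^\infty(\R^2)$ with $\fphi(\0)=1$, supported in $\dm^T B_c(\0)\subseteq(-\pi,\pi)^2$. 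It also yields the companion radial $\fpsi=\hat\psi$, supported in an annulus $\pcr_1\le\pcr\le\pcr_2$ with $\pcr_1>0$ (as $\fpsi$ vanishes near the origin), satisfying $|\fphi(\xi)|^2+|\fpsi(\xi)|^2=|\fphi(\dn\xi)|^2$. Taking $\vk=\0$ in \eqref{refeq} gives $\fphi(2\xi)=\fa(\xi)\fphi(\xi)$ with $\fa:=\fa_\0\in C^\infty(\T^2)$, which is exactly item (i). What remains is to split the radial $\fpsi$ into directional pieces whose squared moduli sum to $|\fpsi|^2$ at each scale.

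Writing $\fpsi(\pcr e^{\iu\theta})=P(\pcr)$, I would take the radial profile of $\feta$ to be this same $P$ and set $\feta(\pcr e^{\iu\theta}):=P(\pcr)\Theta(\theta)$, where $\Theta\in C^\infty$ is an even Meyer-type angular window supported in $[-\theta_0,\theta_0]$ with $\tfrac{\pi}{2m}<\theta_0<\tfrac{\pi}{m}$ and normalized so that $\Theta(\theta)^2+\Theta(\theta-\tfrac{\pi}{m})^2\equiv1$ on the overlap of neighbouring shifts; this gives the tensor-product structure and the support in \eqref{feta:supp}, so item (ii) holds with $\pcr_1,\pcr_2$ the radial support of $\fpsi$. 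With $q_j:=2^{\lfloor\rho j\rfloor}$ and $\mpsi_j=s_j=mq_j$, the formulas (1)--(2) realize the general splitting \eqref{wavelet:directional}: $\fpsi^{j,\ell}(\xi)=\fpsi(\xi)\beta_{j,\ell}(\xi/\|\xi\|)$, where $\beta_{j,\ell}$ is the $q_j$-rescaled, antipodally symmetrized copy of $\Theta$ rotated by $\pi(\ell-1)/s_j$. The key point to verify is the angular $\ell_2$-partition of unity \eqref{partition},
\[
\sum_{\ell=1}^{\mpsi_j}\bigl|\beta_{j,\ell}(\xi/\|\xi\|)\bigr|^2=1,\qquad \xi\in\R^2\setminus\{\0\},
\]
for every $j$, which yields $\sum_{\ell=1}^{\mpsi_j}|\fpsi^{j,\ell}(\xi)|^2=|\fpsi(\xi)|^2$ and hence, by the scalar relation above,
\[
|\fphi(\xi)|^2+\sum_{\ell=1}^{\mpsi_j}|\fpsi^{j,\ell}(\xi)|^2=|\fphi(\dn\xi)|^2,\qquad\xi\in\R^2.
\]
Since $\fphi$ and every $\fpsi^{j,\ell}$ are supported in $(-\pi,\pi)^2$, the separation condition \eqref{special:gen} holds, so the displayed identity is exactly \eqref{nep:special}; combined with $\lim_{j\to+\infty}|\fphi(\dn^j\cdot)|^2=1$ distributionally (from $\fphi(\0)=1$ and continuity), which is \eqref{phi:to:1:special}, Corollary~\ref{cor:ntwf:special} gives a frequency-based nonstationary tight frame for every $J\ge0$. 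Passing to the space domain through the Fourier correspondence and Parseval's identity, with the factors $(2\pi)^2$ cancelling, produces \eqref{ntwf:2D}, which is item (iii).

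It then remains to read off the structural properties. Property (3) is a direct computation from (1)--(2): the radial support of $\fpsi^{j,\ell}$ is $[\pcr_1,\pcr_2]$ and its angular half-width is $\theta_0/q_j$, so dilating the frequency variable by $2^j$ scales the radius to $[2^j\pcr_1,2^j\pcr_2]$ and the angle to $2^{j-\lfloor\rho j\rfloor}\theta_0$, giving \eqref{psij:supp}; since the length is of order $2^j$ and the width of order $2^j\cdot2^{-\lfloor\rho j\rfloor}\theta_0$, one gets $\mathrm{width}\approx\mathrm{length}^{1-\rho}$. For item (4), each $\fpsi^{j,\ell}\in C^\infty(\R^2)$ because $\Theta$ is smooth and $\fpsi$ vanishes near $\0$, so the factor $\beta_{j,\ell}(\xi/\|\xi\|)$ (singular only at the origin) never meets the support; the antipodal symmetrization makes $\wh{\psi^{j,0}}$ even, whence the $\psi^{j,\ell}$ are real-valued Schwarz functions. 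The refinement relation \eqref{psij:refeq} is obtained exactly as in the proof of Theorem~\ref{thm:ntwf:special}, by multiplying the radial mask $\fb_\0$ of \eqref{fb} by the angular factor and defining $\fb_{j,\ell}$ through the same $C^\infty$ cut-off on the set where $\fphi$ vanishes.

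The \emph{main obstacle} is establishing the angular $\ell_2$-partition of unity simultaneously for all scales while keeping each $\psi^{j,\ell}$ globally $C^\infty$. Two points require care. First, the antipodal symmetrization $\feta(\pcr e^{\iu q_j\theta})+\feta(-\pcr e^{\iu q_j\theta})$ in (1) must glue smoothly across the cut $\theta=\pm2^{-\lfloor\rho j\rfloor}\pi$; the constraint $\theta_0<\tfrac{\pi}{m}$ is precisely what guarantees that at most two neighbouring rotated windows overlap at any direction, so the partition condition collapses to the two-term Meyer identity and the cut lies strictly outside the support of the rescaled $\feta$. Second, one must check that a single window $\Theta$ serves every $j$: after rescaling the angular variable by $q_j$, the $s_j=mq_j$ equally spaced copies (spacing $\pi/m$ over the period $\pi q_j$) reproduce the same two-term identity at every scale, so the configuration is self-similar in $j$ and the finite sum over $\ell=1,\dots,s_j$ equals $1$. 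Verifying this self-similarity together with the smooth antipodal gluing is the technical heart of the argument; everything else follows from Corollary~\ref{cor:ntwf:special} and the already-established properties of $\fphi,\fpsi$.
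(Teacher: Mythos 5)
Your proposal is correct and follows essentially the same route as the paper: build radial $\fphi,\fpsi$ from the construction of Theorem~\ref{thm:ntwf:special}, split $\fpsi$ angularly by a smooth $\ell_2$ partition of unity on the circle (the paper's $\alpha_{m,\gep}$ windows, your Meyer-type $\Theta$), verify \eqref{fphi:fpsij:sp} at every scale, and invoke Corollary~\ref{cor:ntwf:special} together with mask constructions as in Theorem~\ref{thm:ntwf:special}. The only difference is cosmetic: the paper builds one-dimensional profiles $\mathring{\fphi},\mathring{\fpsi}$ with $d=1$, $\dm=2$ and lifts them radially, while you run the $d=2$, $\dm=2I_2$ construction directly and extract the radial profile --- an equivalence the paper itself notes.
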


\begin{proof} To prove Theorem~\ref{thm:directional}, we construct some special $2\pi$-periodic functions.
For a positive integer $n$ and any $0<\gep\le \frac{\pi}{2n}$,
by \cite[Lemma 4.2 or (4.2)]{Han:frame}, we can construct a $2\pi$-periodic nonnegative function $\alpha_{n,\gep}$ in $C^\infty(\T)$ such that
\begin{equation}\label{alpha:small}
\alpha_{n,\gep}(\xi)=1, \qquad \xi\in [-\tfrac{\pi}{2n}+\gep, \tfrac{\pi}{2n}-\gep]\quad \hbox{and}\quad
\alpha_{n,\gep}(\xi)=0, \qquad \xi\in [-\pi,\pi)\bs [-\tfrac{\pi}{2n}-\gep, \tfrac{\pi}{2n}+\gep]
\end{equation}
and
\begin{equation}\label{alpha:tile}
\sum_{\ell=0}^{2n-1} |\alpha_{n,\gep}(\xi+\tfrac{\pi \ell}{n})|^2=1, \qquad \xi\in \R.
\end{equation}
%

Let $\mathring{\fphi}$ and $\mathring{\fpsi}$ be constructed in the proof of Theorem~\ref{thm:ntwf:special} with $d=1$ and $\dm=2$. Or equivalently, by \cite[Lemma 4.2 or (4.2)]{Han:frame}, we can easily construct two real-valued even functions $\mathring{\fphi}$ and $\mathring{\fpsi}$ in $C^\infty(\R)$ such that their supports are contained inside $[-\pi,\pi]$, $\fphi$ takes value one in a neighborhood of $0$, \eqref{relation:special} holds with $\dn=1/2$ and $d=1$, and
there exist $2\pi$-periodic trigonometric functions $\mathring{\fa}$ and $\mathring{\fb}$ in $C^\infty(\T)$ such that
%
\begin{equation}\label{1d:refeq}
\mathring{\fphi}(2\xi)=\mathring{\fa}(\xi)\mathring{\fphi}(\xi) \quad \mbox{and}\quad \mathring{\fpsi}(2\xi)=\mathring{\fb}(\xi)\mathring{\fphi}(\xi), \qquad \xi\in \R.
\end{equation}
Now we define
\begin{equation}\label{2D:fphi:fpsi}
\fphi(\pcr e^{i\theta})=\mathring{\fphi}(\pcr),\quad  \fpsi(\pcr e^{i\theta})=\mathring{\fpsi}(\pcr) \qquad
\mbox{and}\quad \feta(\pcr e^{i\theta}):=\mathring{\fpsi}(\pcr) \alpha_{m,\gep}(\theta), \qquad \pcr\ge 0, \theta\in [-\pi, \pi).
\end{equation}
One can easily check that the supports of all $\wh{\psi^{j,\ell}}$ are contained inside $(-\pi,\pi)^d$ and \eqref{fphi:fpsij:sp} holds with $d=2$ and $\dn=\frac{1}{2}I_2$ for $j\in \N \cup\{0\}$. Define $2\pi\Z^2$-periodic functions $\fa, \fb_{j,0}$ on $[-\pi,\pi)^2$ by
\begin{equation}\label{direction:fa}
\fa(\pcr e^{i\theta})=\begin{cases}
\mathring{\fa}(\pcr), &\text{if $0\le \pcr\le \pi, \theta\in [-\pi, \pi)$;}\\
0,&\text{if $\pcr e^{i\theta}\in [-\pi,\pi)^2$ with $\pcr>\pi$}\end{cases}
\end{equation}
and
\[
\fb_{j,0}(\pcr e^{i\theta})=\begin{cases}
\mathring{\fb}(\pcr)\Big(\alpha_{m,\gep}(2^{\lfloor \rho j\rfloor}\theta)+
\alpha_{m,\gep}(2^{\lfloor \rho j\rfloor}(\theta+\pi))\Big),
&\text{for $0\le \pcr\le \pi$ and $\theta\in [-2^{-\lfloor \rho j\rfloor}\pi, 2^{-\lfloor \rho j\rfloor}\pi)$;}\\
0, &\text{for all other $\pcr e^{i\theta}$ inside $[-\pi,\pi)^2$.}
\end{cases}
\]
For $\ell=1, \ldots, \mpsi_j$, we define $\fb_{j,\ell}(\pcr e^{i\theta}):=\fb_{j,0}(\pcr e^{i(\theta-2^{\lfloor \rho j\rfloor}\theta \pi(\ell-1)/m)})$ for $0\le \pcr\le \pi$ and $\fb_{j,\ell}(\pcr e^{i\theta}):=0$ for otherwise $\pcr e^{i\theta}\in [-\pi, \pi)^2$ but $\pcr\ge\pi$.
By Corollary~\ref{cor:ntwf:special}, using a similar proof as that of Theorem~\ref{thm:ntwf:special}, we see that all the claims can be easily checked.
\end{proof}

\end{document}